\newtheorem{Satz}{Satz}
\newtheorem{Lemma}[Satz]{Lemma}	
\newtheorem{Corollary}[Satz]{Corollary}
\newtheorem{Theorem}[Satz]{Theorem}
\newtheorem{Proposition}[Satz]{Proposition}	
\newtheorem*{Lemma*}{Lemma}
\theoremstyle{definition}
\newcommand{\C}{\mathbb{C}}
\newcommand{\R}{\mathbb{R}}
\newcommand{\N}{\mathbb{N}}
\newcommand{\T}{\mathbb{T}}
\newcommand{\cK}{\mathcal{K}}
\newcommand{\cB}{\mathcal{B}}
\newcommand{\cH}{\mathcal{H}}
\newcommand{\cP}{\mathcal{P}}
\newcommand{\cF}{\mathcal{F}}
\newcommand{\cJ}{\mathcal{J}}
\newcommand{\lin}{\mathrm{lin}}
\newcommand{\sign}{\mathrm{sign}}
\newcommand{\relint}{\mathrm{relint \, }}
\newcommand{\intd}{\mathrm{d}}
\newcommand{\1}{\mathds{1}}
\DeclareMathOperator{\SO}{SO}
\DeclareMathOperator{\OO}{O}
\DeclareMathOperator{\GOp}{G}
\DeclareMathOperator{\AOp}{A}
\renewcommand{\S}{\mathbb S^{n-1}}
\newcommand{\Sn}{\mathbb S^{n-1}}
\DeclareMathOperator{\Span}{{\rm span}}
\newcommand{\CM}[1]
  {
    \phi_{#1}
  }
\newcommand{\TenCM}[4]
  {
    \phi_{#1}^{#2,#3,#4}
  }
\begin{document}

  \title{Kinematic formulae for tensorial curvature measures}
  \date{December 26, 2016}

  \author{Daniel Hug \and Jan A. Weis}
  \address{Karlsruhe Institute of Technology (KIT), Department of Mathematics, D-76128 Karls\-ruhe, Germany}
  \email{daniel.hug@kit.edu}
  \address{Karlsruhe Institute of Technology (KIT), Department of Mathematics, D-76128 Karls\-ruhe, Germany}
  \email{jan.weis@kit.edu}

  \thanks{The authors were supported in part by DFG grants FOR 1548 and HU 1874/4-2}
  \subjclass[2010]{Primary: 52A20, 53C65; secondary: 52A22, 52A38, 28A75}
  \keywords{Kinematic formula, tensor valuation, curvature measure, Minkowski tensor, integral geometry, convex body, polytope}

\begin{abstract}
    Tensorial curvature measures are tensor-valued
    generalizations of the curvature measures of convex bodies.
    We prove a complete set of kinematic formulae for such tensorial
    curvature measures on convex bodies and for their (nonsmooth)
    generalizations on convex polytopes.
    These formulae express the integral mean of the tensorial
    curvature measure of the intersection of two given convex
    bodies (resp.~polytopes),  one of which is uniformly moved
    by a proper rigid motion, in terms of linear combinations
    of tensorial curvature measures of the given convex bodies
    (resp.~polytopes).
    We prove these results in a more direct way than in the
    classical proof of the principal kinematic formula for 
    curvature measures, which uses the connection to Crofton
    formulae to determine the involved constants explicitly.
\end{abstract}

\maketitle

  \section{Introduction}\label{secIntro}

    The \textit{principal kinematic formula} is a cornerstone of classical integral geometry. In its basic form
    in Euclidean space, it deals with integral mean values for distinguished geometric functionals with respect to the invariant measure on the group of proper rigid motions;
    see \cite[Chap.~5.1]{SchnWeil08} and \cite[Chap.~4.4]{Schneider14} for background
    information, recent developments and applications.  To be more specific, let
    $\cK^{n}$ denote the space of convex bodies (nonempty, compact, convex sets) in  $\R^{n}$.
    For two convex bodies  $K, K' \in \cK^{n}$ and $j \in \{ 0, \ldots, n \}$, the principal kinematic formula  states that
    \begin{align} \label{Form_Princ_KF}
      \int_{\GOp_n} V_{j} (K \cap g K') \, \mu( \intd g) = \sum_{k = j}^{n} \alpha_{n j k} V_{k}(K) V_{n - k + j}(K'),
    \end{align}
    where $\GOp_{n}$ denotes the group of proper rigid motions of $\R^{n}$, $\mu$ is the motion invariant Haar measure on $\GOp_{n}$, normalized in the usual way (see \cite[p. 586]{SchnWeil08}), and  the constant
    \begin{align*}
      \alpha_{njk}=\frac{\Gamma\left(\frac{k+1}{2}\right)
      \Gamma\left(\frac{n-k+j+1}{2}\right)}{\Gamma\left(\frac{j+1}{2}\right)
      \Gamma\left(\frac{n+1}{2}\right)}
    \end{align*}
    is expressed in terms of specific values of the Gamma function.

    The also appearing functionals $V_{j}: \cK^{n} \rightarrow \R$, $j \in \{ 0, \ldots, n \}$, are the \textit{intrinsic volumes}, which occur as the uniquely determined coefficients of the monomials in the \textit{Steiner formula}
    \begin{equation} \label{Form_Steiner}
      \mathcal{H}^n (K + \epsilon B^{n}) = \sum_{j = 0}^{n} \kappa_{n - j} V_{j} (K) \epsilon^{n - j},\qquad \epsilon \geq 0,
    \end{equation}
    which holds for all convex bodies $K \in \cK^{n}$. As usual in this context, $+$ denotes the Minkowski addition in $\R^{n}$,  $B^{n}$ is the Euclidean unit ball in $\R^{n}$ of $n$-dimensional volume $\kappa_{n}$, and $\mathcal{H}^n$ is the $n$-dimensional Hausdorff measure.
    Properties of the intrinsic volumes such as continuity, isometry invariance, additivity (valuation property) and homogeneity are derived from corresponding properties of the volume functional.
    A key result for the intrinsic volumes is \textit{Hadwiger's characterization theorem}, which states that $V_{0}, \ldots, V_{n}$ form a basis of the vector space of continuous and isometry invariant real-valued valuations on $\cK^{n}$ (see \cite[Theorem~6.4.14]{Schneider14}).
		This theorem can be used to derive not only \eqref{Form_Princ_KF}, but also Hadwiger's general integral geometric theorem (see \cite[Theorem~5.1.2]{SchnWeil08}).

    It is an important feature of the principal kinematic formula that the integral mean values of the intrinsic volumes can be expressed as  sums of products of intrinsic volumes of the two convex bodies involved, and no other functionals are required. In other words, the principal kinematic formulae for the intrinsic volumes constitute a complete (closed) system of integral geometric formulae. As a consequence, these formulae  can be iterated as described in \cite[Chap.~5.1]{SchnWeil08} and applied to the study of Boolean models in
		stochastic geometry (see \cite[Chap.~9.1]{SchnWeil08}). The bilinear structure of the right side of the kinematic formula \eqref{Form_Princ_KF} motivated the introduction and study of kinematic operators, in the recently developed field of algebraic integral geometry, which led to new insights, generalizations, and to a profound understanding of the structure of integral geometric formulae (see \cite{Bernigsurvey}, \cite{Fusurvey}) in connection with the algebraic structure of translation invariant valuations (see \cite{AleskerCourse}, also for further references).

    It is natural to extend the principal kinematic formula by applying the integration over the rigid motion group $\GOp_{n}$ to functionals which generalize the intrinsic volumes. A far reaching generalization is obtained by localizing the intrinsic volumes as measures, associated with convex bodies,  such that the intrinsic volumes are just the total measures. Specifically, this leads to the \emph{support measures} (generalized curvature measures) which are weakly continuous, locally defined and motion equivariant valuations on convex bodies with values in the space of finite measures on Borel subsets of $\R^{n} \times \Sn$, where $\Sn$ denotes the Euclidean unit sphere in $\R^{n}$. They are determined by a local version of the Steiner formula \eqref{Form_Steiner}, and thus they provide a natural example of a localization of the intrinsic volumes. Their marginal measures on Borel subsets of $\R^{n}$ are called \emph{curvature measures}. In 1959, Federer (see \cite[Theorem 6.11]{Federer59}) proved kinematic formulae for the curvature measures, even in the more general setting of sets with positive reach, which contain the classical kinematic formula as a very special case. More recently, kinematic formulae for support measures on convex bodies have been established by Glasauer in 1997 (see \cite[Theorem~3.1]{Glasauer97}). These formulae are based on a special set operation on support elements of the involved bodies, which limits their  usefulness for the present purpose, as explained in \cite{GoodeyHW}.

    Already in the early seventies, integral geometric formulae for \emph{quermassvectors} (curvature centroids) had been found by Hadwiger \& Schneider and Schneider \cite{HadSchn71, Schn72, Schn72b}. Recently, McMullen \cite{McMullen97} initiated a study of tensor-valued generalizations of the (scalar) intrinsic volumes and the vector-valued quermassvectors. This naturally raised the question for an analogue of Hadwiger's characterization theorem and for integral geometric formulae for basic additive, tensor-valued functionals (tensor valuations) on the space of convex bodies. As shown by Alesker \cite{Alesker99, Alesker99b}, and further studied in \cite{HSS07a}, there exist natural tensor-valued functionals, the basic \emph{Minkowski tensors}, which generalize the intrinsic volumes and span the vector space of tensor-valued, continuous, additive functionals on the space of convex bodies which are also isometry covariant. Although the basic {Minkowski tensors} span the corresponding vector space of tensor-valued valuations,  they satisfy nontrivial linear relationships and hence are not a basis. This fact and the inherent difficulty of computing Minkowski tensors explicitly for sufficiently many examples provide an obstacle for computing the constants involved in integral geometric formulae. Nevertheless major progress has been made in various works by different methods. Integral geometric Crofton formulae for general Minkowski tensors have been obtained in \cite{HugSchnSchu08}. A specific case has been further studied and applied to problems in stereology in \cite{KousKideHug15}, for various extensions see \cite{HugWeis16a} and \cite{SvaneJensen}. A quite general study of various kinds of integral geometric formulae for \emph{translation invariant} tensor valuations is carried out in \cite{BernHug15}, where also corresponding algebraic structures are explicitly determined. An approach to Crofton and thus kinematic formulae for translation invariant tensor valuations via integral geometric formulae for area measures (which are of independent interest) follows from \cite{GoodeyHW} and \cite{SchusterWannerer}. Despite all these efforts and substantial progress, a complete set of kinematic and Crofton formulae for general Minkowski tensors has not  been found so far. The current state of the art is described in various contributions of the lecture notes \cite{KidVed16}.

    Surprising new insight into integral geometric formulae can be gained by combining local and tensorial extensions of the classical intrinsic volumes. This setting has recently been studied by Schneider (see \cite{Schneider13}) and further analyzed by Hug \& Schneider in their works on \textit{local tensor valuations} (see \cite{HugSchn14, HugSchn16a, HugSchn16b}). These valuations can be viewed as tensor-valued generalizations of the support measures. On the other hand, they can be considered as localizations of the global tensor valuations introduced and first studied by McMullen (see \cite{McMullen97}) and characterized by Alesker in a Hadwiger style (see \cite{Alesker99, Alesker99b}), as pointed out before. Inspired by the characterization results obtained in \cite{Schneider13, HugSchn14, HugSchn16a, HugSchn16b,Saienko}, we consider tensor-valued curvature measures,  the \textit{tensorial curvature measures}, and their (nonsmooth) generalizations, which basically appear for polytopes, and establish a complete set of kinematic formulae for these (generalized) tensorial curvature measures.

    Kinematic formulae for the generalized tensorial curvature measures on polytopes (which do not have a continuous extension to all convex bodies) have not been considered before. In fact, our results are new even for the tensorial curvature measures which are obtained by integration against  support measures. The constants involved in these formulae are surprisingly simple and can be expressed as a concise  product of Gamma functions. Although some information about tensorial kinematic formulae can be gained from abstract characterization results (as developed in \cite{Schneider13,HugSchn14}), we believe that explicit results cannot be obtained by such an approach, at least not in a simple way.
    In contrast, our argument starts as a tensor-valued version of the proof of the kinematic formula for curvature measures (see \cite[Theorem 4.4.2]{Schneider14}). But instead of first deriving Crofton formulae to obtain the coefficients of the appearing functionals, we have to proceed in a direct way. In fact, the explicit derivation of the constants in related Crofton formulae  via the template method does not seem to be feasible. In an accompanying paper \cite{HWCrofton}, we shall provide explicit Crofton formulae for tensorial curvature measures as a straightforward consequence of our general kinematic formulae,  and relate them to previously obtained special results. The main technical part of the present argument, which requires the calculation of rotational averages over Grassmannians and the rotation group, is new even in the scalar setting.
		
			The current work explores generalizations of the principal kinematic formula to tensorial measure-valued valuations. 
		Various other directions have been taken in extending the classical framework. 
		Kinematic formulae for support functions have been studied by Weil \cite{Weil1995}, 
		Goodey \& Weil  \cite{GoodeyWeil2003},  
		and Schneider \cite{Schneider2003}, recent related work on mean section bodies and Minkowski valuations is due to 
		Schuster \cite{Schuster2010}, Goodey \& Weil \cite{GoodeyWeil2014}, and Schuster \& Wannerer \cite{SchusterWannerer}, a Crofton 
		formula for Hessian measures of convex functions has been established and applied in \cite{ColesantiHug2002}. Instead 
		of changing the functionals involved in the integral geometric formulae, it is also natural and in fact required by 
		applications to stochastic geometry to explore formulae where the integration is extended over subgroups $G$ of the  motion group. The extremal cases are translative and rotational integral geometry, where $G=\R^n$ and $G=\OO(n)$, 
		respectively. The former is described in detail in \cite[Chap.~6.4]{SchnWeil08}, recent progress for scalar- and measure-valued valuations and further references are provided in \cite{WeilI, WeilII, HugRataj}, applications 
		 to stochastic geometry are given in \cite{HoerrmannWeil, HHKM14, HoerrmannDiss15}, where translative integral 
		formulae for tensor-valued measures are established and applied. Rotational Crofton formulae for tensor valuations have recently been developed further by Auneau et al. \cite{ACZJV, ARVJ} and Svane \& Vedel Jensen \cite{SvaneJensen} 
		(see also the literature cited there), applications to stereological estimation and bio-imaging are treated and discussed in \cite{RZNJ, ZNVJ, VJR}. Various other groups of isometries, also in Riemannian isotropic spaces, have been studied in recent years. Major progress has been made, for instance, in Hermitian integral geometry (in curved spaces), where the interplay between 
		global and local results turned out to be crucial (see \cite{BernigFu11, BFS14, 
		Fu90, Fu06, Wannerer1, Wannerer2, Solanes15} and the survey \cite{Bernig17}), 
		but various other group actions have been studied successfully as well (see \cite{AleskerFaifman14, Bernig11, 
		Bernig12, BernigFaifman16, BernigSolanes14, BernigSolanes16, BernigVoide16, Faifman16}).
		
Minkowski tensors, tensorial curvature measures, and general local tensor valuations are useful 
morphological characteristics that allow to describe the geometry of complex spatial structure 
and are particularly well suited for developing structure-property relationships for tensor-valued or orientation-dependent physical 
properties; see  \cite{Schroeder10, Schroeder11} for surveys and Klatt's PhD thesis \cite{Klatt16} for an in depth  analysis of various aspects (including random fields and percolation) of the interplay between physics and Minkowski tensors. These applications cover a wide spectrum ranging from nuclear physics \cite{Schue15}, granular matter 
\cite{Kapfer12, Xia14, Schaller15, Kuhn15}, density functional theory \cite{Wittmann14}, physics of complex plasmas 
\cite{Boebel16}, to physics of materials science \cite{Saadatfar12}. Characterization and classification theorems for tensor valuations, uniqueness and reconstruction results \cite{HK16, KK15, Kousholt16, KousholtPhD16}, which are accompanied by numerical algorithms \cite{Schroeder10, Schroeder11, HKS15, CK16}, stereological estimation procedures   \cite{KousKideHug15, KZKJ16}, and 
integral geometric formulae, as considered in the present work, form the foundation for these and many other 
applications.

    The paper is structured as follows. Section \ref{secPre} contains a brief introduction to the basic concepts and definitions required to state our results. The main theorem (Theorem \ref{Thm_Main}) and its consequences are described in Section \ref{Tmr}, where also further comments on the structure of the obtained formulae are provided. The proof of  Theorem  \ref{Thm_Main}, which is given in Section \ref{secProof}, is preceded by several auxiliary results. These concern integral averages over Grassmannians and the rotation group and are the subject of Section \ref{secAux}. The proof of Theorem  \ref{Thm_Main} is divided into four main steps which are outlined at the beginning of Section \ref{secProof}. In the course of that proof, iterated sums involving Gamma functions build up. In a final step, these expressions have to be simplified again. Some basic tools which are required for this purpose are collected in an appendix.

  \section{Preliminaries}\label{secPre}

    We work in the $n$-dimensional Euclidean space
    $\R^{n}$, $n\ge 2$, equipped with its usual topology generated by the standard
    scalar product $\langle \cdot\,, \cdot \rangle$ and the corresponding Euclidean norm
    $\| \cdot \|$. For a topological space $X$, we denote the Borel
    $\sigma$-algebra on $X$ by $\cB(X)$.

    The algebra of symmetric tensors over
    $\R^n$ is denoted by $\T$ (the underlying $\R^n$ will be clear from the context),
		the vector space of symmetric tensors of rank $p\in\N_0$ is denoted
		by $\T^p$ with $\T^0=\R$. The symmetric tensor product of
    tensors $T_i\in\T$, $i=1,2$, over $\R^{n}$ is denoted by $T_1T_2\in \T$,
		and for $q\in\N_0$ and a tensor $T\in\T$ we write $T^{q}$ for the  $q$-fold tensor
    product of $T$, where $T^0:=1$; see also the
    contributions \cite{HugSchneider16c,BernigHug15b} in the lecture notes \cite{KidVed16}
		for further details and references.
    Identifying $\R^{n}$
    with its dual space via the given scalar product, we interpret a symmetric
    tensor of rank $p$ as a symmetric $p$-linear map from
    $(\R^{n})^{p}$ to $\R$. A special tensor is the \emph{metric
      tensor} $Q \in \T^{2}$, defined by $Q(x, y) := \langle x, y \rangle$ for $x,
    y \in \R^{n}$. For an affine $k$-flat $E\subset\R^n $, $k \in \{0,
    \ldots, n\}$, the metric tensor $Q(E)$ associated with $E$ is defined by $Q(E)(x,
    y) := \langle p_{E^{0}} (x), p_{E^{0}} (y) \rangle$ for $x, y \in \R^{n}$, where $E^0$
		denotes the linear direction space of $E$ (see Section \ref{secAux}) and $p_{E^{0}} (x)$
		is the orthogonal projection of $x$ to $E^0$. If $F\subset\R^n$ is a $k$-dimensional convex 
		set, then we again write $Q(F)$ for the metric tensor $Q(\text{aff}(F))=Q(\text{aff}(F)^0)$ associated
		with the affine subspace $\text{aff}(F)$ spanned by $F$.

    In order to define the tensorial curvature measures and to explain
		how they are related to the support measures,
		we start with the latter.
    For a convex body $K \in \cK^{n}$ and
  	 $x \in \R^{n}$, we denote the
    metric projection of $x$ onto $K$ by $p(K, x)$, and  we define $u(K, x)
    := (x - p(K, x)) / \| x - p(K, x) \|$ for $x \in \R^{n} \setminus
    K$. For $\epsilon >
    0$ and a Borel set $\eta \subset \Sigma^{n}:=\R^{n} \times
    \Sn$,
    \begin{equation*}
      M_{\epsilon}(K, \eta) := \left\{ x \in \left( K + \epsilon B^{n} \right)
      \setminus K \colon \left( p(K, x), u(K, x) \right) \in \eta \right\}
    \end{equation*}
    is a local parallel set of $K$ which satisfies the \emph{local Steiner
      formula}
    \begin{equation} \label{14-Form_Steiner_loc}
		\mathcal{H}^n (M_{\epsilon}(K,
      \eta)) = \sum_{j = 0}^{n - 1} \kappa_{n - j} \Lambda_{j} (K, \eta)
      \epsilon^{n - j}, \qquad \epsilon \geq 0.
    \end{equation}
    This relation determines the \emph{support measures} $\Lambda_{0}
    (K, \cdot), \ldots, \Lambda_{n - 1} (K, \cdot)$ of $K$, which are
    finite Borel measures on $\cB (\Sigma^{n})$. Obviously, a comparison
    of \eqref{14-Form_Steiner_loc} and the Steiner formula yields
    $V_{j}(K) = \Lambda_{j} (K, \Sigma^{n})$. For further information
   see \cite[Chap.~4.2]{Schneider14}.

    Let $\cP^n\subset\cK^n$ denote the space of convex polytopes in $\R^n$.
    For a polytope $P \in \cP^{n}$ and $j \in \{ 0, \ldots, n \}$, we
    denote the set of $j$-dimensional faces of $P$ by $\cF_{j}(P)$ and
    the normal cone of $P$ at a face $F \in \cF_{j}(P)$ by $N(P,F)$.
    Then, the $j$th support measure $\Lambda_{j} (P, \cdot)$ of $P$ is explicitly given by
    \begin{equation*}
      \Lambda_{j} (P, \eta) = \frac {1} {\omega_{n - j}}
      \sum_{F \in \cF_{j}(P)} \int _{F} \int _{N(P,F) \cap \Sn}
      \1_\eta(x,u)
      \, \cH^{n - j - 1} (\intd u) \, \cH^{j} (\intd x)
    \end{equation*}
    for $\eta \in \cB(\Sigma^{n})$ and $j \in \{ 0, \ldots, n - 1 \}$,
		where $\cH^{j}$ denotes the $j$-dimensional Hausdorff measure and
		$\omega_{n}$ is the $(n - 1)$-dimensional volume of $\Sn$.
		
		For a polytope $P\in\mathcal{P}^n$, we define the
    \emph{generalized tensorial curvature measure} 
		$$
		\TenCM{j}{r}{s}{l} (P, \cdot) ,\qquad j \in\{0, \ldots, n - 1\}, \, r, s, l \in \N_{0},
		$$
		as the Borel measure on $\mathcal{B}(\R^n)$ which is given by
    \begin{equation*}
      \TenCM{j}{r}{s}{l} (P, \beta) :=  c_{n, j}^{r, s, l}\,\frac{1}{\omega_{n - j}}
			\sum_{F \in \cF_{j}(P)} Q(F)^{l} \int _{F \cap \beta} x^r \, \cH^{j}(\intd x)
			\int _{N(P,F) \cap \Sn} u^{s} \, \cH^{n - j - 1} (\intd u),
    \end{equation*}
    for $\beta \in \cB(\R^{n})$,   where
    \begin{align*}
      c_{n, j}^{r, s, l} := \frac{1}{r! s!} \frac{\omega_{n - j}}{\omega_{n - j + s}} \frac{\omega_{j + 2l}}{\omega_{j}}  \text{ if }j\neq 0, \quad\text{ $c_{n, 0}^{r, s, 0} := \frac{1}{r! s!}
		\frac{\omega_{n}}{\omega_{n + s}}$},\quad \text{ and $c_{n, 0}^{r, s, l} :=1$ for $l\ge 1$.}
    \end{align*}
		Note that if $j = 0$ and $l \ge 1$,
		then we have $\TenCM{0}{r}{s}{l} \equiv 0$. In all other cases the factor $1/\omega_{n-j}$ in the definition of $\TenCM{j}{r}{s}{l} (P, \beta)$ and the factor $\omega_{n-j}$ involved in the constant $c_{n, j}^{r, s, l}$ cancel.
		
    For a general convex body $K\in\mathcal{K}^n$, we define the \emph{tensorial curvature measure}  
		$$
		\TenCM{n}{r}{0}{l} (K, \cdot),\qquad r,l\in\N_0,
		$$
		as the Borel measure on $\mathcal{B}(\R^n)$ which is given by
    \begin{equation*}
      \TenCM{n}{r}{0}{l} (K, \beta): = c_{n, n}^{r, 0, l} \, Q^{l} \int _{K \cap \beta} x^r \, \cH^{n}(\intd x),
    \end{equation*}
		for $\beta \in \cB(\R^{n})$,  
    where  $c_{n, n}^{r, 0, l} := \frac{1}{r!} \frac{\omega_{n + 2l}}{\omega_{n}}$.
    For the sake of convenience, we extend these definitions by $\TenCM{j}{r}{s}{0} := 0$
		for $j \notin \{ 0, \ldots, n \}$ or $r \notin \N_{0}$ or $s \notin \N_{0}$ or $j = n$ and $s \neq 0$. Finally, we
		observe that for $P\in\mathcal{P}^n$, $r=s=l=0$, and $j=0,\ldots,n-1$, the scalar-valued measures 
		$\TenCM{j}{0}{0}{0}(P,\cdot)$ are just the curvature measures $\CM{j}(P,\cdot)$, that is,
		the marginal measures on $\R^n$ of the support measures $\Lambda_j(P,\cdot)$, which therefore can be extended from 
		polytopes to general convex bodies, and $\TenCM{n}{0}{0}{0}(K,\cdot)$ is the restriction 
		of the $n$-dimensional Hausdorff measure to $K\in\mathcal{K}^n$.

		To put the generalized tensorial curvature measures into their natural context and to emphasize some of their properties,
		we recall the relevant definitions and results from \cite{Schneider13,HugSchn14,HugSchneider16c}.  For $p\in\N_0$,
		let $T_p(\mathcal{P}^n)$ denote the vector space of all mappings $\tilde{\Gamma}:\mathcal{P}^n\times \mathcal{B}(\Sigma^n)\to \mathbb{T}^p$ such that
		\begin{itemize}
		\item $\tilde{\Gamma}(P,\cdot)$ is a $\mathbb{T}^p$-valued measure on
		$\mathcal{B}(\Sigma^n)$, for each $P\in\mathcal{P}^n$;
		\item $\tilde{\Gamma}$ is isometry covariant;
		\item $\tilde{\Gamma}$ is  locally defined.
		\end{itemize}
		We refer to \cite{Schneider13,HugSchn14,HugSchneider16c}
		for explicit definitions of these properties. 
		
		For a polytope $P\in\mathcal{P}^n$, the \emph{generalized local Minkowski tensor} 
		$$
		\tilde{\phi}^{r,s,l}_j (P, \cdot),\qquad j\in\{0,\ldots,n-1\},\, r,s,l\in\N_0,
		$$
		is the Borel measure on $\mathcal{B}(\Sigma^n)$ which is defined by 
		$$
      \tilde{\phi}^{r,s,l}_j (P, \eta) :=  c_{n, j}^{r, s, l}\,\frac{1}{\omega_{n - j}}
			\sum_{F \in \cF_{j}(P)} Q(F)^{l} \int _{F}\int _{N(P,F) \cap \Sn} \1_\eta(x,u) x^r u^{s}\, \cH^{j}(\intd x)
		  \, \cH^{n - j - 1} (\intd u),
		$$
for $\eta\in \mathcal{B}(\Sigma^n)$.

It was shown in 	\cite{Schneider13,HugSchn14} (where a different notation and normalization was used)
 that the mappings $Q^m	\tilde{\phi}^{r,s,l}_j$, where  $m,r,s,l\in\N_0$ satisfy $2m+r+s+2l=p$, where $
j\in\{0,\ldots,n-1\}$, and where $l=0$ if $j\in\{0,n-1\}$, form a basis of $T_p(\mathcal{P}^n)$. 

This fundamental characterization theorem highlights the importance of the generalized local Minkowski tensors. 
In particular, since the mappings $P\mapsto Q^m	\tilde{\phi}^{r,s,l}_j(P,\cdot)$, $P\in \mathcal{P}^n$, are additive, as shown in \cite{HugSchn14}, 
 all mappings in $T_p(\mathcal{P}^n)$ are valuations. 
		
		Noting that 
		$$ \TenCM{j}{r}{s}{l} (P, \beta)=\tilde{\phi}^{r,s,l}_j 
		(P, \beta\times\Sn),\qquad j\in\{0,\ldots,n-1\},\, r,s,l\in\N_0,
		$$
		for  $P\in\mathcal{P}^n$ and all $\beta\in\mathcal{B}(\R^n)$, it is clear that the mappings 
		$$ \TenCM{j}{r}{s}{l} :
		\mathcal{P}^n\times \mathcal{B}(\R^n)\to \mathbb{T}^p,\qquad (P,\beta)\mapsto \TenCM{j}{r}{s}{l}(P,\beta),
		$$
		where
		$p=r+s+2l$, have similar properties as the generalized local Minkowski tensors. In particular, it is easy to see 
		that (including the case $j=n$ where $\mathcal{P}^n$ can be replaced by $\mathcal{K}^n$)
		\begin{enumerate}
		\item[(a)] $\TenCM{j}{r}{s}{l}(P,\cdot)$ is a $\mathbb{T}^p$-valued measure on $\cB(\R^n)$, for each $P\in\mathcal{P}^n$;
		\item[(b)] $\TenCM{j}{r}{s}{l}$ is isometry covariant, that is, translation covariant of degree $r$ in the sense that
		$$
		 \TenCM{j}{r}{s}{l} (P+t, \beta+t)=\sum_{i=0}^r \TenCM{j}{r-i}{s}{l} (P, \beta)\frac{t^i}{i!},
		$$
		for all $P\in\mathcal{P}^n$,  $\beta\in\cB(\R^n)$, and $t\in\R^n$, and rotation covariant in the sense that
		$$
		 \TenCM{j}{r}{s}{l} (\vartheta P, \vartheta\beta)=\vartheta  \TenCM{j}{r}{s}{l} ( P, \beta),
		$$
		for all $P\in\mathcal{P}^n$,  $\beta\in\cB(\R^n)$, and $\vartheta\in \OO(n)$ (the orthogonal group of $\R^n$);
		\item[(c)] $\TenCM{j}{r}{s}{l}$ is locally defined, that is, if $\beta\subset\R^n$ is open and $P,P'\in\mathcal{P}^n$
		are such that $P\cap \beta=P'\cap \beta$, then $\TenCM{j}{r}{s}{l}(P,\gamma)=\TenCM{j}{r}{s}{l}(P',\gamma)$
		for all Borel sets $\gamma\subset\beta$  (This notion is common in this context, though different from what we called ``locally defined'' before.);
		\item[(d)] $P\mapsto \TenCM{j}{r}{s}{l}(P,\cdot)$, $P\in\mathcal{P}^n$, is additive (a valuation).
		\end{enumerate}
	It is an open problem whether the vector space of all mappings
	 $ \Gamma : \mathcal{P}^n\times \mathcal{B}(\R^n)\to \mathbb{T}^p$ satisfying these properties,
	is spanned by the mappings  $Q^m\TenCM{j}{r}{s}{l}$, where $m,r,s,l\in\N_0$ satisfy $2m+r+s+2l=p$,
	where $j\in\{0,\ldots,n-1\}$, and where $l=0$ if $j\in\{0,n-1\}$, or where $j=n$ and $s=l=0$.  
	The linear independence of
	these mappings can be shown in a similar way as Theorem 3.1 in \cite{HugSchn14} was shown. 
	 A characterization 
	theorem for {\em smooth} tensor-valued curvature measures has recently been found by Saienko \cite{Saienko}. 
	Note, however, that the tensorial curvature measures $\TenCM{j}{r}{s}{l}$, for $1\le j\le n-2$ and $l>1$, are not smooth. 
	We also point out 
	that, for every $\beta\in\cB(\R^n)$,  the mapping $\TenCM{j}{r}{s}{l}(\cdot,\beta)$ on $\mathcal{P}^n$
	is measurable.  This is implied by the more general Lemma \ref{lemmeas1} in the appendix.

    It has been shown in \cite{HugSchn14} that the generalized local Minkowski tensor $\tilde{\phi}^{r,s,l}_j$ has a continuous extension
    to $\cK^{n}$ which preserves all other properties if and only if $l \in \{0,1\}$;
		see \cite[Theorem 2.3]{HugSchn14} for a  stronger characterization result. 
		 Globalizing any such continuous extension in the $\Sn$-coordinate,
    we obtain a continuous extension for the generalized tensorial curvature measures.
    For $l = 0$, there exists a natural representation of the extension
		via the support measures. We call these the
    \emph{tensorial curvature measures}. For a convex body $K \in \cK^{n}$, a Borel
		set $\beta \in \cB(\R^{n})$, and $r, s \in \N_{0}$, they are given by
    \begin{equation}\label{alsomeas}
      \TenCM{j}{r}{s}{0} (K, \beta):= c_{n, j}^{r, s, 0} \int _{\beta \times \Sn} x^r u^s \, \Lambda_j(K, \intd (x, u)),
    \end{equation}
    for $j\in\{0,\ldots,n-1\}$, whereas $ \TenCM{n}{r}{0}{l} (K, \beta)$ has 
		already been defined for all $K \in \cK^{n}$. 
		
		For an explicit description of the generalized local Minkowski tensors $\tilde\phi_j^{r,s,1}(K,\cdot)$, 
		for $K \in \cK^{n}$ and  
		$j\in\{0,\ldots,n-1\}$, and hence of  $\TenCM{j}{r}{s}{1} (K, \cdot)$, we refer to \cite{HugSchn14}. 
		There it is  shown that the map $K\mapsto\tilde\phi_j^{r,s,1}(K,\eta)$, $K \in \cK^{n}$, is measurable for all 
		$\eta\in\mathcal{B}(\Sigma^n)$, which yields that the map $K\mapsto\TenCM{j}{r}{s}{1} (K, \beta)$, $K \in \cK^{n}$, is measurable for all 
		$\beta\in\mathcal{B}(\R^n)$. Moreover, the measurability of the map $K\mapsto\phi_j^{r,s,0}(K,\beta)$, $K \in \cK^{n}$,
		is clear from \eqref{alsomeas}.

    In the coefficients of the kinematic formula and in the proof of our main theorem, the classical \emph{Gamma function}
		is involved. It can be defined via the Gaussian product formula
    \begin{align*}
      \Gamma(z) := \lim_{a \rightarrow \infty} \frac{a^{z} a!}{z (z + 1) \cdots (z + a)}
    \end{align*}
    for all $z \in \C \setminus \{ 0, -1, \ldots \}$ (see \cite[(2.7)]{Artin64}). For $c \in \R\setminus \mathbb{Z}$ and $m\in \N_{0}$, this definition
		implies that
		\begin{align} \label{Form_Gam_Cont}
      \frac{\Gamma(-c + m)}{\Gamma(-c)} = (-1)^{m} \frac{\Gamma(c + 1)}{\Gamma(c - m + 1)}.
    \end{align}
    The Gamma function has simple poles at the nonpositive integers. The right side of relation \eqref{Form_Gam_Cont}
		provides a continuation of the left side at  $c \in \N_{0}$, where $\Gamma(c - m + 1)^{-1} = 0$ for $c < m$.
		In the following, we also repeatedly use Legendre's
		duplication formula, which states that
		$$
		\Gamma(c)\Gamma(c + \tfrac{1}{2})=2^{1 - 2c}\sqrt{\pi}\,\Gamma(2c)
		$$
		for $c > 0$.

  \section{The main results}\label{Tmr}

    In the present work, we establish explicit kinematic formulae for the generalized tensorial curvature measures $\TenCM{j}{r}{s}{l}$ of polytopes. In other words, for $P, P' \in \cP^n$ and $\beta, \beta' \in \cB(\R^n)$ we express the integral mean value
    \begin{align*}
      \int_{\GOp_n} \TenCM{j}{r}{s}{l} (P \cap g P', \beta \cap g \beta') \, \mu( \intd g)
    \end{align*}
    in terms of the generalized tensorial curvature measures of $P$ and $P'$, evaluated at $\beta$ and $\beta'$, respectively. In fact, the precise result shows that only  a
		selection of these measures is needed. Furthermore, for $l =0, 1$,
		the tensorial measures $\TenCM{j}{r}{s}{l}$ are defined on $\cK^{n} \times \cB(\R^{n})$, and therefore in these two cases
		we also consider
		 integral means of the form
    \begin{align*}
      \int_{\GOp_n} \TenCM{j}{r}{s}{l} (K \cap g K', \beta \cap g \beta') \, \mu( \intd g),
    \end{align*}
    for general $K, K' \in \cK^n$ and $\beta, \beta' \in \cB(\R^n)$. Although the latter result can be deduced as a consequence of the former,  it came as a surprise that the general formulae simplify for $l\in\{0,1\}$ so that only tensorial curvature measures are involved  which admit a continuous extension.

    \begin{Theorem} \label{Thm_Main}
      For $P, P' \in \cP^n$, $\beta, \beta' \in \cB(\R^n)$, $j, l, r, s \in \N_{0}$ with $j \leq n$, and  $l=0$ if $j=0$,
      \begin{align}
        & \int_{\GOp_n} \TenCM{j}{r}{s}{l} (P \cap g P', \beta \cap g \beta') \, \mu( \intd g) \nonumber \\
        & \qquad = \sum_{k = j}^{n} \sum_{m = 0}^{\lfloor \frac s 2 \rfloor} \sum_{i = 0}^{m}
				c_{n, j, k}^{s, l, i, m} \, Q^{m - i} \TenCM{k}{r}{s - 2m}{l + i} (P, \beta)
				\CM{n - k + j} (P', \beta'),
				\label{Form_Thm_Main}
      \end{align}
      where
      \begin{align*}
        c_{n, j, k}^{s, l, i, m} & : = \frac{(-1)^{i}}{(4\pi)^{m} m!} \frac{\binom{m}{i}}{\pi^i}
				\frac {(i + l - 2)!} {(l - 2)!} \frac {\Gamma(\frac {n - k + j + 1} 2)
				\Gamma(\frac {k + 1} 2)} {\Gamma(\frac {n + 1} 2) \Gamma(\frac {j + 1} 2)} \\
        & \qquad \times \frac {\Gamma(\frac {k} 2 + 1)} {\Gamma(\frac {j} 2 + 1)}
				\frac {\Gamma(\frac {j + s} 2 - m + 1)} {\Gamma(\frac {k + s} 2 + 1)}
				\frac {\Gamma(\frac {k - j} 2 + m)} {\Gamma(\frac{k - j}{2})}.
      \end{align*}
    \end{Theorem}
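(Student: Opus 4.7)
The plan is to adapt the classical proof of the kinematic formula for curvature measures (cf.\ \cite[Theorem~4.4.2]{Schneider14}) to the tensorial setting, following the four-step outline sketched at the beginning of Section \ref{secProof}.

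\emph{Step 1 (face decomposition).} For $\mu$-a.e.\ $g \in \GOp_n$, the polytopes $P$ and $gP'$ are in general position, and the $j$-faces of $P \cap gP'$ are exactly the sets $F \cap gF'$ with $F \in \cF_k(P)$, $F' \in \cF_{n-k+j}(P')$ and $k \in \{j,\ldots,n\}$, with normal cone $N(P \cap gP', F \cap gF') = N(P,F) + gN(P',F')$. Substituting this into the polytopal definition of $\TenCM{j}{r}{s}{l}$ yields, for each such face pair, three building blocks: the metric tensor $Q(F \cap gF')^{l}$, the $x^{r}$-weighted $\cH^{j}$-integral over $F \cap gF' \cap \beta \cap g\beta'$, and the $u^{s}$-weighted $\cH^{n-j-1}$-integral over $(N(P,F) + gN(P',F')) \cap \Sn$.

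\emph{Step 2 (translation integral).} Write $g = \tau_{t} \circ \rho$ with $t \in \R^{n}$ and $\rho \in \SO(n)$, so that $\mu$ decomposes as Lebesgue measure times the Haar probability measure on $\SO(n)$. For fixed $\rho$ and a transversal face pair $(F, \rho F')$, Fubini and the standard translative Crofton computation yield
\begin{align*}
  \int_{\R^{n}} \int_{F \cap (\rho F' + t) \cap \beta \cap (\rho \beta' + t)} x^{r} \, \cH^{j}(\intd x) \, \intd t
  = [F, \rho F'] \int_{F \cap \beta} x^{r} \, \cH^{k}(\intd x) \, \cH^{n-k+j}(F' \cap \beta'),
\end{align*}
where $[F, \rho F']$ denotes the subspace determinant of the direction spaces of $F$ and $\rho F'$. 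The decisive point is that the $x^{r}$-factor sits entirely on the $P$-side, which is why only the scalar curvature measure $\CM{n-k+j}(P', \beta')$ can appear on the $P'$-side in the final formula.

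\emph{Step 3 (rotation integral).} What remains after Step 2 is the $\SO(n)$-average of $[F, \rho F']$ times $Q(F \cap \rho F')^{l}$ times $\int_{(N(P,F) + \rho N(P',F')) \cap \Sn} u^{s} \, \cH^{n-j-1}(\intd u)$. Using a multinomial expansion of $u^{s}$ along the orthogonal decomposition $N(P,F) \oplus \rho N(P',F')$ and invoking the auxiliary identities of Section \ref{secAux} on rotational averages over Grassmannians and $\SO(n)$ of tensor powers of unit vectors, this average can be computed in closed form. The contributions attached to $P$ reassemble into generalized tensorial curvature measures $\TenCM{k}{r}{s-2m}{l+i}(P,\beta)$ multiplied by factors of $Q^{m-i}$ (the indices $m$ and $i$ arising, respectively, from the number of $Q$-traces produced by the rotational averaging and from the reshuffling of $Q(F)$ against $Q$), while the $w$-integrals over $N(P',F') \cap \Sn$ collapse under tracing and produce the scalar factor $\CM{n-k+j}(P',\beta')$.

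\emph{Step 4 (simplification of the coefficient).} Putting Steps 2 and 3 together leaves an iterated sum of products of $\Gamma$-function ratios indexed by the tracing parameter $m$, the multinomial splitting parameter $i$, and the face-dimension parameter $k$. The closed-form coefficient $c_{n,j,k}^{s,l,i,m}$ of the theorem has to be extracted from this sum by repeated applications of Legendre's duplication formula, the continuation identity \eqref{Form_Gam_Cont}, and the combinatorial lemmas collected in the appendix. I expect this final simplification to be the principal technical obstacle: the authors themselves emphasize that the Crofton-template route fails to deliver these constants, and the explicit computation of the underlying rotational averages over Grassmannians is new even in the scalar setting.
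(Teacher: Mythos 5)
Your outline reproduces the paper's architecture (translative decomposition and face pairing, computation of the $t$-integral, rotational average, coefficient simplification), and Steps 1, 2 and 4 are essentially what the paper does. But Step 3 --- the heart of the proof --- rests on a false premise. The decomposition of the normal cone $N(P\cap gP', F\cap gF') = N(P,F) + \rho N(P',F')$ is a direct sum but \emph{not} an orthogonal one: $N(P,F)\subset F^\perp$ and $\rho N(P',F')\subset \rho F'^\perp$, and for generic $\rho$ these complementary subspaces are not orthogonal. Consequently a unit vector $u = t_1 u_1 + t_2\rho v$ in the sum of the cones satisfies $t_1^2 + t_2^2 + 2t_1t_2\langle u_1,\rho v\rangle = 1$, and the cross term $\langle u_1,\rho v\rangle$ is precisely the obstruction that makes the rotational integral nontrivial (already for $s=0$ this is why the spherical measure of $(C(\omega)+\rho C(\omega'))\cap\Sn$ is not a product). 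A ``multinomial expansion of $u^s$ along an orthogonal decomposition'' is therefore not available, and without a substitute your claim that the average ``can be computed in closed form'' is unsupported.

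What the paper actually does at this point is: represent the spherical integral over the sum of cones as a Gaussian integral over the cone itself, parametrize the cone by $\omega\times\omega'\times(0,\infty)^2$ with Jacobian proportional to $[F,\vartheta F']$, pass to polar coordinates in $(t_1,t_2)$, and expand $(1+\sin(2\alpha)\langle u,\vartheta\rho v\rangle)^{-(n-j+s)/2}$ as a binomial series. The resulting term-by-term rotational averages require Lemma \ref{Lem_IntForm_VecScal} for $\int (\rho v)^i\langle u,\rho v\rangle^t\,\nu(\intd\rho)$, the transformation Lemma \ref{Lem_TraFo_HSS}, and ultimately the new Grassmannian average $\int [F,L]^2 Q(L)^m Q(F\cap L)^l\,\nu_{n-k+j}(\intd L)$ of Proposition \ref{Prop_Main}, none of which appear in your plan. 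You also omit the separate treatment of the boundary cases $k=j$ and $k=n$, which is needed because the main rotational argument requires $j<k<n$. Until the cross-term issue is addressed by some such device, the proposal does not constitute a proof.
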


Several remarkable facts concerning the coefficients $ c_{n, j, k}^{s, l, i, m}$ should be observed. First,
    the ratio ${(i + l - 2)!} /{(l - 2)!}$ has to be interpreted in terms of Gamma functions and relation \eqref{Form_Gam_Cont} if $l\in\{0,1\}$, as described below. The corresponding special cases will be considered separately in the following
		two theorems. Second, the coefficients are indeed independent of the tensorial parameter $r$ and depend only on $l$ through
		the ratio ${(i + l - 2)!} /{(l - 2)!}$. Moreover, only tensors $\TenCM{k}{r}{s - 2m}{p} (P, \beta) $ with $p \ge l$ show up
		on the right side of the kinematic formula.
		Using Legendre's duplication formula, we could shorten the given expressions for the  coefficients $ c_{n, j, k}^{s, l, i, m}$
		even further. However, the present form has the advantage of exhibiting that the factors in the second line cancel each other if $s=0$ (and hence also $m=i=0$). Furthermore,
		the coefficients are signed in contrast to the classical kinematic formula. We shall see below that for $l\in\{0,1\}$ all coefficients
		are nonnegative.

    In Theorem \ref{Thm_Main}, we can simplify the coefficient $c_{n, j, k}^{s, l, i, m}$ for $k \in \{j, n \}$ and $j\le n-1$ such that only one functional remains.  From \eqref{Form_Gam_Cont} we conclude that
    \begin{align*}
      c_{n, j, j}^{s, l, i, m} = \1\{ i = m = 0 \}.
    \end{align*}
    Furthermore, since $\TenCM{n}{r}{s}{l}$ vanishes for $s \neq 0$ and the functionals $Q^{\frac{s}{2} - i}
		\TenCM{n}{r}{0}{l + i}$, $i \in \{ 0, \ldots, \frac{s}{2} \}$, can be combined, we can redefine
      \begin{align*}
        c_{n, j, n}^{s, l, i, m} & := \1 \{ s \text{ even}, m = i = \tfrac{s}{2} \} \frac{1}{(2\pi)^{s}
				(\frac{s}{2})!} \frac{\Gamma(\frac{n - j + s}{2})}{\Gamma(\frac{n - j}{2})};
      \end{align*}
			see \eqref{Plus} and \eqref{PlusPlus} in the proof of Theorem \ref{Thm_Main}.
			
			It should also be observed that
		the functionals $\TenCM{n-1}{r}{s-2m}{l+i}$
		can be expressed in terms of the functionals $Q^{m'}\TenCM{n-1}{r}{s'}{0}$, where $m',s'\in\N_0$ and $2m'+s'=s+2l$. We do
		not pursue this here, since the resulting coefficients do not simplify nicely (see, however, \cite{HWCrofton}).
			
		Theorem \ref{Thm_Main} states an equality for measures, hence the case $r=0$ of the theorem immediately implies
		the general case. In fact, algebraic induction and the inversion invariance of $\mu$ immediately yield the following extension of Theorem \ref{Thm_Main}.
		
		\begin{Corollary}\label{corgenThm1}
		 Let $P, P' \in \cP^n$,  $j, l, r, s \in \N_{0}$ with $j \leq n$, and  $l=0$ if $j=0$. Let
		 $f,h$ be  tensor-valued continuous functions  on $\R^n$. Then
		\begin{align*}
        & \int_{\GOp_n}\int_{\R^n}f(x)h(gx)\, \TenCM{j}{r}{s}{l} (P \cap g^{-1} P', \intd x) \, \mu( \intd g) \nonumber \\
        & \qquad = \sum_{k = j}^{n} \sum_{m = 0}^{\lfloor \frac s 2 \rfloor} \sum_{i = 0}^{m}
				c_{n, j, k}^{s, l, i, m} \, Q^{m - i} \int_{\R^n}f(x)\,\TenCM{k}{r}{s - 2m}{l + i} (P, \intd x)\int_{\R^n} h(y)
				\,\CM{n - k + j} (P', \intd y).
      \end{align*}
			\end{Corollary}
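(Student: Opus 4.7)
The plan is to derive the corollary from Theorem \ref{Thm_Main} in two steps: first rewrite the kinematic formula using the inversion invariance of the Haar measure $\mu$ on $\GOp_n$, then extend by algebraic induction from indicator functions to continuous tensor-valued functions. Concretely, I would apply the substitution $g\mapsto g^{-1}$ in \eqref{Form_Thm_Main}, which uses that $\mu$ is inversion invariant. This converts $P\cap gP'$ into $P\cap g^{-1}P'$ and $\beta\cap g\beta'$ into $\beta\cap g^{-1}\beta'$, while leaving the right-hand side of \eqref{Form_Thm_Main} unchanged. Writing
\[
\TenCM{j}{r}{s}{l}(P\cap g^{-1}P',\beta\cap g^{-1}\beta')=\int_{\R^n}\1_{\beta}(x)\,\1_{\beta'}(gx)\,\TenCM{j}{r}{s}{l}(P\cap g^{-1}P',\intd x),
\]
one obtains exactly the assertion of the corollary in the special case $f=\1_{\beta}$, $h=\1_{\beta'}$, with both sides identified as elements of $\T^{r+s+2l}$.

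Next, both sides of the formula are multilinear in $f$ and $h$, so the identity extends immediately to simple Borel functions with tensor coefficients, that is, to $f=\sum_{i}T_i\,\1_{\beta_i}$ and $h=\sum_{j}S_j\,\1_{\beta_j'}$ with tensors $T_i,S_j\in\T$ and Borel sets $\beta_i,\beta_j'\in\cB(\R^n)$. Here one uses the bilinearity and commutativity of the symmetric tensor product and the fact that $\TenCM{k}{r}{s-2m}{l+i}(P,\cdot)$ and $\CM{n-k+j}(P',\cdot)$ are a tensor-valued and a scalar Borel measure, respectively.

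Finally, for continuous tensor-valued $f$ and $h$ I would approximate uniformly on the compact sets $P$ and $P'$ by simple Borel functions of the above form, observing that the integrand $f(x)\,h(gx)\,\TenCM{j}{r}{s}{l}(P\cap g^{-1}P',\intd x)$ vanishes unless $x\in P$ and $gx\in P'$; thus only the values of $f,h$ on these compacta matter, and boundedness of the approximants on the relevant sets is automatic. Dominated convergence then yields the full statement, the required integrability being provided componentwise by applying Theorem \ref{Thm_Main} with $\beta=\beta'=\R^n$ to obtain a finite total variation bound. The whole argument can be carried out componentwise in a fixed basis of $\T^p$, so no genuine obstacle arises from the tensorial character of the problem; the only mildly technical point is the inversion substitution in the first step.
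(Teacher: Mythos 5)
Your proposal is correct and follows exactly the route the paper intends: the paper derives the corollary from Theorem \ref{Thm_Main} by the one-line remark that ``algebraic induction and the inversion invariance of $\mu$'' yield the extension, and your three steps (inversion substitution to reach the indicator case, bilinearity for simple tensor-valued functions, uniform approximation with dominated convergence) are precisely that argument spelled out. The only point worth tightening is that the dominating bound should be a genuine total variation estimate, e.g.\ bounding each component of $\TenCM{j}{r}{s}{l}(P\cap g^{-1}P',\cdot)$ by a constant (depending on $P$) times $V_j(P\cap g^{-1}P')$ and invoking the classical kinematic formula, rather than applying Theorem \ref{Thm_Main} with $\beta=\beta'=\R^n$ to the signed components directly.
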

			
		In particular, we could choose $h(y)=y^{\bar r}$, $y\in\R^n$, for $\bar r\in\N_0$. Moreover, since the generalized tensorial curvature measures depend additively on the underlying polytope, all integral formulae remain true if $P$ and $P'$ are replaced by finite unions of polytopes. Similar extensions hold for the following results.
	
		\medskip
		
We state and prove Theorem \ref{Thm_Main} in the present form, since this does not change the argument
		and globalization yields corresponding results for general Minkowski tensors.

    The cases $l = 0, 1$ are of special interest, since we can formulate the kinematic formulae
		for general convex bodies in these cases.

    \begin{Theorem} \label{Cor_Main_l=1}
      For $K, K' \in \cK^n$, $\beta, \beta' \in \cB(\R^n)$ and $j, r, s \in \N_0$ with $1\le j \leq n$,
      \begin{align*}
        & \int_{\GOp_n} \TenCM{j}{r}{s}{1} (K \cap g K', \beta \cap g \beta') \, \mu( \intd g) \\
        & \qquad = \sum_{k = j}^{n} \sum_{m = 0}^{\lfloor \frac s 2 \rfloor}
				c_{n, j, k}^{s, 1, 0, m} \, Q^{m} \TenCM{k}{r}{s - 2m}{1} (K, \beta) \CM{n - k + j} (K', \beta'),
      \end{align*}
      where
      \begin{align*}
        c_{n, j, k}^{s, 1, 0, m} & = \frac{1}{(4\pi)^{m} m!} \frac {\Gamma(\frac {n - k + j + 1} 2) \Gamma(\frac {k + 1} 2)} {\Gamma(\frac {n + 1} 2) \Gamma(\frac {j + 1} 2)} \frac {\Gamma(\frac {k} 2 + 1)} {\Gamma(\frac {j} 2 + 1)} \frac {\Gamma(\frac {j + s} 2 - m + 1)} {\Gamma(\frac {k+s} 2 + 1)} \frac {\Gamma(\frac {k - j} 2 + m)} {\Gamma(\frac{k - j}{2})}.
      \end{align*}
    \end{Theorem}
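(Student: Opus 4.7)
The plan is to deduce Theorem \ref{Cor_Main_l=1} from Theorem \ref{Thm_Main} in two stages: first a simplification of the coefficient $c_{n,j,k}^{s,l,i,m}$ at $l=1$ that collapses the inner sum over $i$, and then an approximation argument extending the identity from polytopes to general convex bodies.

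For polytopes $P,P'\in\cP^n$, I apply Theorem \ref{Thm_Main} with $l=1$ and examine the factor $(i+l-2)!/(l-2)!$ in the coefficient. For $l=1$ this formally reads $(i-1)!/(-1)!$ and must be interpreted as $\Gamma(i)/\Gamma(0)$; by \eqref{Form_Gam_Cont} applied with $c=0$ and $m=i$ it equals $(-1)^i/\Gamma(1-i)$, which is $1$ for $i=0$ and vanishes for every $i\ge 1$. Hence only the term $i=0$ survives in the inner sum of \eqref{Form_Thm_Main}, and since $\binom{m}{0}/\pi^0=1$, combining with the outer $(-1)^i$ in $c_{n,j,k}^{s,l,i,m}$ produces exactly the coefficient $c_{n,j,k}^{s,1,0,m}$ of the statement. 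This establishes the formula for polytopes.

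To extend to arbitrary $K,K'\in\cK^n$, I approximate by Hausdorff-converging polytope sequences $P_\nu\to K$ and $P'_\nu\to K'$. For $l=1$ the tensorial curvature measures $\TenCM{k}{r}{s-2m}{1}(\,\cdot\,,\,\cdot\,)$, $j\le k\le n$, admit weakly continuous extensions to $\cK^n$ (via the generalized local Minkowski tensors of degree $l=1$ recalled in the preliminaries, together with the explicit integral representation used for $k=n$), and the curvature measures $\CM{n-k+j}$ are weakly continuous via the support measures. Moreover, for $\mu$-almost every $g\in\GOp_n$ the intersection map $(L,L')\mapsto L\cap gL'$ is Hausdorff continuous at $(K,K')$, so $P_\nu\cap gP'_\nu\to K\cap gK'$ and hence $\TenCM{j}{r}{s}{1}(P_\nu\cap gP'_\nu,\cdot)\to\TenCM{j}{r}{s}{1}(K\cap gK',\cdot)$ weakly for such $g$.

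To pass through the Haar integral and to reach arbitrary Borel sets $\beta,\beta'$, I first test both sides of the polytope identity against continuous bounded functions $f,h$ with compact support, as in Corollary \ref{corgenThm1}. The integrand on $\GOp_n$ is supported on the compact set of motions for which $gK'$ meets a fixed neighborhood of $K$ (absorbing the $P_\nu,P'_\nu$ for large $\nu$), and the total variation of $\TenCM{j}{r}{s}{1}(P_\nu\cap gP'_\nu,\cdot)$ there is uniformly bounded by geometric functionals of the approximants, which stay bounded along the sequence. Dominated convergence then transfers the tested identity to $(K,K')$; since both sides define tensor-valued bi-measures on $\cB(\R^n)\times\cB(\R^n)$, a standard monotone class argument finally promotes the equality from continuous test functions to all Borel rectangles $\beta\times\beta'$. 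The main obstacle is precisely this last step: weak convergence alone does not imply equality on arbitrary Borel sets, necessitating the detour through continuous test functions together with the bi-measure structure; the measurability of $g\mapsto\TenCM{j}{r}{s}{1}(K\cap gK',\beta\cap g\beta')$ required to formulate the left-hand side follows from Lemma \ref{lemmeas1} applied to the approximants and preserved under the limit.
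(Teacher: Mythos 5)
Your proposal is correct and takes essentially the same route as the paper: the identity $\frac{(i-1)!}{(-1)!}=\frac{\Gamma(i)}{\Gamma(0)}=\1\{i=0\}$ via \eqref{Form_Gam_Cont} collapses the inner sum in Theorem \ref{Thm_Main} to the $i=0$ term, and the extension from polytopes to general convex bodies is obtained by approximation using the weak continuity of $\TenCM{k}{r}{s-2m}{1}$ and of the curvature measures. The additional details you supply on the approximation step (continuity of intersection for $\mu$-a.e.\ $g$, dominated convergence, passing from continuous test functions to Borel sets via the measure structure) are the standard facts the paper leaves implicit.
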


    \begin{proof}
      We apply \eqref{Form_Gam_Cont} to obtain
      \begin{align*}
        \frac {(i - 1)!} {(- 1)!} = \frac {\Gamma (i)} {\Gamma (0)} = \1\{ i = 0 \}.
      \end{align*}
      Then, Theorem \ref{Thm_Main} yields the assertion in the polytopal case. For a convex body, we conclude the formula by approximating it by polytopes, since the valuations $\TenCM{k}{r}{s - 2m}{1}$ have weakly continuous extensions to $\cK^n$ (and the same is true for the curvature measures).
    \end{proof}

    \begin{Theorem} \label{Cor_Main_l=0}
      For $K, K' \in \cK^n$, $\beta, \beta' \in \cB(\R^n)$ and $j, r, s \in \N_0$ with $j \leq n$,
      \begin{align*}
        & \int_{\GOp_n} \TenCM{j}{r}{s}{0} (K \cap g K', \beta \cap g \beta') \, \mu( \intd g) \\
        & \qquad = \sum_{k = j}^{n} \sum_{m = 0}^{\lfloor \frac s 2 \rfloor} \sum_{i = 0}^{1} c_{n, j, k}^{s, 0, i, m} \, Q^{m - i} \TenCM{k}{r}{s - 2m}{i} (K, \beta) \CM{n - k + j} (K', \beta'),
      \end{align*}
      where
      \begin{align*}
        c_{n, j, k}^{s, 0, i, m}  = \frac{1}{(4\pi)^{m} m!} \frac{\binom{m}{i}}{\pi^{i}} \frac {\Gamma(\frac {n - k + j + 1} 2) \Gamma(\frac {k + 1} 2)} {\Gamma(\frac {n + 1} 2) \Gamma(\frac {j + 1} 2)}
           \frac {\Gamma(\frac {k} 2 + 1)} {\Gamma(\frac {j} 2 + 1)} \frac {\Gamma(\frac {j + s} 2 - m + 1)} {\Gamma(\frac {k+s} 2 + 1)} \frac {\Gamma(\frac {k - j} 2 + m)} {\Gamma(\frac{k - j}{2})}.
      \end{align*}
    \end{Theorem}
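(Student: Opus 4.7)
The plan is to specialize Theorem \ref{Thm_Main} to $l=0$ in the polytopal case, simplify the resulting coefficients through the Gamma-function continuation \eqref{Form_Gam_Cont}, and then extend to general convex bodies by Hausdorff approximation, exactly as in the proof of Theorem \ref{Cor_Main_l=1}. The only subtle point in the coefficient simplification is that for $l=0$ the factor $(i+l-2)!/(l-2)!$ appearing in $c_{n,j,k}^{s,l,i,m}$ must be interpreted as $\Gamma(i-1)/\Gamma(-1)$ and handled through \eqref{Form_Gam_Cont}.

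Concretely, I would apply \eqref{Form_Gam_Cont} with $c=1$ and $m=i$ to obtain
\begin{align*}
\frac{\Gamma(-1+i)}{\Gamma(-1)} = (-1)^i\,\frac{\Gamma(2)}{\Gamma(2-i)} = \frac{(-1)^i}{\Gamma(2-i)}.
\end{align*}
The extra factor $(-1)^i$ present in the original coefficient $c_{n,j,k}^{s,0,i,m}$ cancels this sign, leaving $1/\Gamma(2-i)$, which equals $1$ for $i\in\{0,1\}$ and vanishes for every $i\geq 2$ because $1/\Gamma$ has zeros at the nonpositive integers. Hence, in the triple sum on the right-hand side of \eqref{Form_Thm_Main} with $l=0$, only the terms with $i\in\{0,1\}$ survive, and the remaining factors reassemble into exactly the coefficient listed in Theorem \ref{Cor_Main_l=0}. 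This establishes the identity for all $P,P'\in\cP^n$.

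To promote the polytopal identity to arbitrary $K,K'\in\cK^n$, I would approximate $K$ and $K'$ in the Hausdorff metric by sequences of polytopes and invoke the weak continuity on $\cK^n$ of the tensorial curvature measures $\TenCM{j}{r}{s}{0}$ and $\TenCM{k}{r}{s-2m}{i}$ for $i\in\{0,1\}$ that was discussed in Section \ref{secPre}, together with the weak continuity of the classical curvature measures $\CM{n-k+j}$. The exchange of limit and motion integral on the left-hand side is handled by dominated convergence, based on a standard norm bound for $\TenCM{j}{r}{s}{0}(L\cap gL',\beta\cap g\beta')$ in terms of a polynomial in the intrinsic volumes of $L$ and $gL'$ restricted to a fixed bounded region, whose motion integral is finite by the classical principal kinematic formula \eqref{Form_Princ_KF}. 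The main obstacle in the extension step is supplying a uniform such bound and controlling the $\mu$-negligible set of touching configurations where $g\mapsto L\cap gL'$ may fail to be Hausdorff-continuous; once these technicalities are handled, the limit exchange is routine and the formula for general convex bodies follows.
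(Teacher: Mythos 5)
Your proposal is correct and follows essentially the same route as the paper: apply \eqref{Form_Gam_Cont} to rewrite $(i-2)!/(-2)!$ as $(-1)^i/\Gamma(2-i)$, observe that the sign cancels against the $(-1)^i$ already present in $c_{n,j,k}^{s,0,i,m}$ and that the factor kills all terms with $i\ge 2$, and then pass from polytopes to general convex bodies via approximation and the weak continuity of $\TenCM{k}{r}{s-2m}{i}$ for $i\in\{0,1\}$. The additional technical remarks on dominated convergence for the limit exchange are standard and consistent with what the paper leaves implicit.
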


    \begin{proof}
      We apply \eqref{Form_Gam_Cont} to obtain
      $$
        \frac {(i - 2)!} {(- 2)!} = \frac {\Gamma (i - 1)} {\Gamma (- 1)}  =
				(-1)^{i} \frac {1} {\Gamma (2 - i)} =  \1\{ i = 0 \}-\1\{ i = 1 \}.
      $$
      Then, Theorem \ref{Thm_Main} yields the assertion in the polytopal case. For a convex body, we conclude the formula by approximating it by polytopes, since for $i\in\{0,1\}$ the valuations $\TenCM{k}{r}{s - 2m}{i}$  have weakly continuous extensions to $\cK^n$.
			Finally, we note that $c_{n, j, k}^{s, 0, 1, 0} =0$.
    \end{proof}

  It is crucial that the right sides of the formulae in Theorem \ref{Cor_Main_l=1} and Theorem \ref{Cor_Main_l=0} only involve the tensorial curvature measures $\TenCM{k}{r}{s}{0}$ and $\TenCM{k}{r}{s}{1}$, which are the ones with weakly continuous extensions to $\cK^{n}$, and not $\TenCM{k}{r}{s}{i}$ with $i>1$.

  \section{Some auxiliary results}\label{secAux}

  		
      Before we start with the proof of the main theorem, we establish several auxiliary integral geometric results in this section.
						As a rule, these results hold for $n\ge 1$. If not stated otherwise, the case $n=1$ (or even $n=0$) can be checked directly.

We recall the following notions.
 The rotation group on $\R^{n}$ is denoted by $\SO(n)$, the orthogonal group on
    $\R^n$ by $\OO(n)$, and we write $\nu$ for the
    Haar probability measure on both spaces. By $\GOp(n, k)$ (resp.~$\AOp(n, k)$),
    for $k \in \{0, \ldots, n\}$, we denote the
    Grassmannian (resp.~affine Grassmannian)  of $k$-dimensional linear
		(resp.~affine) subspaces of $\R^{n}$. We
    write $\nu_k$  for the rotation  invariant Haar probability measure
    on $\GOp(n, k)$. The directional space of an affine $k$-flat $E \in
    \AOp(n, k)$ is denoted by $E^{0} \in \GOp(n, k)$ and its orthogonal complement
    by $E^{\perp} \in \GOp(n, n - k)$.
    For $k \in \{0, \ldots, n\}$ and $F \in \GOp(n, k)$, we denote the group of rotations of $\R^n$ mapping $F$ (and hence also $F^\perp$) into itself  by $\SO(F)$ (which is the same as $\SO(F^\perp)$) and write $\nu^{F}$ for the  Haar probability measure
    on $\SO(F)$. For $l \in \{0, \ldots, n\}$, let $\GOp(F, l) :=  \{ L \in \GOp(n, l): L \subset F \}$ if $l \leq k$, and let
		$\GOp(F, l) :=  \{ L \in \GOp(n, l): L \supset F \}$ if $l > k$.
    Then $\GOp(F, l)$ is a homogeneous $\SO(F)$-space.
    Hence, there exists a unique  Haar probability measure $\nu^{F}_l$ on $\GOp(F, l)$, which is $\SO(F)$ invariant.  An introduction to invariant measures and group operations as needed here is provided in \cite[Chap.~13]{SchnWeil08}, where, however, $\SO(F)$ is defined in a slightly different way.

    Recall that the orthogonal projection of a vector $x \in
    \R^{n}$ to a linear subspace $L$ of $\R^{n}$ is denoted by
    $p_{L}(x)$, and set $\pi_{L}(x):= p_{L}(x)/\|p_{L}(x)\|\in \Sn$ for $x \notin
    L^{\perp}$. For two linear subspaces $L, L'$ of $\R^{n}$, the
    subspace determinant $[L, L']$ is defined as follows (see \cite[Sect.~14.1]{SchnWeil08}). One
    extends an orthonormal basis of $L \cap L'$ (the empty set if  $L \cap L'=\{0\}$) to an orthonormal basis
    of $L$ and to one of $L'$. Then $[L, L']$ is the volume of the
    parallelepiped spanned by all these vectors. Consequently, if $L=\{0\}$ or $L=\R^n$, then
    $[L,L']:=1$. For $F,F'\in\cK^n$, we define
    $[F,F']:=[F^0,(F')^0]$, where $F^0$ is the direction space of the affine hull of $F$.

      A basic tool in this work is the following integral geometric transformation
			formula, which is a special case of \cite[Theorem 7.2.6]{SchnWeil08}.

      \begin{Lemma} \label{Lem_TraFo_SW}
        Let $0\le j \leq k \leq n$ be integers, $F \in \GOp(n,k)$, and let $f:\GOp(n,n-k+j) \rightarrow \R$ be integrable. Then
        \begin{align*}
          \int _{\GOp(n, n - k + j)} f(L) \, \nu_{n - k + j}(\intd L) = d_{n, j, k} \int _{\GOp(F, j)} \int _{\GOp(U, n - k + j)} [F, L]^j f(L) \, \nu_{n - k + j}^U(\intd L) \, \nu_{j}^F(\intd U)
        \end{align*}
        with
        \begin{align*}
          d_{n, j, k} := \prod_{i = 1}^{k - j} \frac {\Gamma(\frac {i} 2) \Gamma(\frac {n - k + j + i} 2)} {\Gamma(\frac {j + i} 2) \Gamma(\frac {n - k + i} 2)}.
        \end{align*}
      \end{Lemma}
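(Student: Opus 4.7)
The plan is to deduce this lemma directly from the general Grassmannian disintegration formula \cite[Theorem 7.2.6]{SchnWeil08}, so the task reduces to specialization of parameters, identification of the subspace determinant factor, and verification that the constant $d_{n,j,k}$ matches the $\Gamma$-product that arises. The key dimensional observation is that for $L \in G(n, n-k+j)$ and $F \in G(n,k)$ one has
\begin{align*}
\dim F + \dim L &= k + (n-k+j) = n + j,
\end{align*}
so for $\nu_{n-k+j}$-almost every $L$ the intersection $U := L \cap F$ lies in $G(F,j)$, and conversely, given $U \in G(F,j)$, the admissible $L$'s sweep out $G(U, n-k+j)$ (which is nonempty and homogeneous under $\mathrm{SO}(U^\perp)$ since $j \le n-k+j$). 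This is exactly the flag structure needed to invoke the Schneider--Weil transformation.

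Concretely, I would first apply Theorem 7.2.6 of \cite{SchnWeil08} with the roles set so that the "outer" Grassmannian becomes $G(n, n-k+j)$, the intermediate subspace is $U \in G(F,j)$ obtained from $U = L \cap F$, and the "inner" integration is over $L \in G(U, n-k+j)$. The Jacobian factor produced by that theorem is a power of the subspace determinant between $F$ and $L$; counting degrees of freedom (the difference between the dimension of the flag manifold of pairs $(U,L)$ and the dimension of $G(n,n-k+j)$ alone) yields the exponent $j$, so the factor is $[F,L]^j$. This identification is the technical heart of the reduction and is where I would be most careful: the convention for the subspace determinant in \cite{SchnWeil08} must be translated into the one fixed in Section \ref{secPre}, and the fact that $[F,L] = [F^\perp, L^\perp]$ up to normalization should be used to align the two viewpoints.

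Second, I would verify the constant. The $\mathrm{SO}(n)$-invariant probability measures on $G(n,n-k+j)$, $G(F,j) \cong G(k,j)$, and $G(U,n-k+j) \cong G(n-j,n-k)$ all have explicit "volumes" in the ambient unnormalized Haar measure expressible through products of $\omega_i = 2\pi^{i/2}/\Gamma(i/2)$. Taking the ratio of the normalizing constants, one obtains a telescoping product of Gamma quotients which, after cancellation, collapses to
\begin{align*}
d_{n,j,k} = \prod_{i=1}^{k-j} \frac{\Gamma\!\left(\tfrac{i}{2}\right)\Gamma\!\left(\tfrac{n-k+j+i}{2}\right)}{\Gamma\!\left(\tfrac{j+i}{2}\right)\Gamma\!\left(\tfrac{n-k+i}{2}\right)},
\end{align*}
matching the formula stated. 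Alternatively, one can test the formula on the constant function $f \equiv 1$ (using that both sides of Theorem 7.2.6 are valid for integrable $f$) to fix the constant a posteriori, which is arguably cleaner.

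The main obstacle is purely bookkeeping: translating notations between \cite{SchnWeil08} and the paper, keeping the direction of the flag straight (is $U \subset F \cap L$ or $U \supset F + L^\perp$?), and confirming the exponent on $[F,L]$. Once the parameters are matched, the result is immediate, and no further analytic content is required beyond the cited theorem.
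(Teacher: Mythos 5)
Your proposal is correct and follows exactly the route the paper takes: the paper gives no proof of this lemma at all, stating only that it is a special case of \cite[Theorem 7.2.6]{SchnWeil08}, so your specialization of parameters, identification of the exponent $j=\dim(F\cap L)$, and matching of the constant is precisely the (omitted) verification. The only caveat is that your ``a posteriori'' determination of $d_{n,j,k}$ via $f\equiv 1$ still requires evaluating $\int [F,L]^j\,\nu^U_{n-k+j}(\intd L)$, which is itself a nontrivial moment computation, so reading the constant off the cited theorem is the cleaner option.
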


      The preceding lemma yields the next result, which is again an integral geometric transformation formula (which will be needed in Section \ref{subsec5.3}). Here we
			(implicitly) require that $n\ge 2$.

      \begin{Lemma}[{\cite[Corollary 4.2]{HugSchnSchu08}}] \label{Lem_TraFo_HSS}
        Let $u \in \Sn$ and let $h: \GOp(n, k) \rightarrow \T$ be an integrable function, where  and $0 < k < n$. Then
        \begin{align*}
          \int _{\GOp(n, k)} h(L) \, \nu_k (\intd L) & = \frac {\omega_{k}} {2 \omega_{n}} \int _{\GOp(u^\perp, k - 1)} \int _{-1}^{1} \int _{U^{\perp} \cap u^{\perp} \cap \Sn} |t|^{k - 1} ( 1 - t^2 )^{\frac {n - k - 2} 2} \\
          & \qquad \times h \bigl( \Span \bigl\{ U, t u + \sqrt{1 - t^2} w \bigr\} \bigr) \, \cH^{n - k - 1} (\intd w) \, \intd t \, \nu^{u^\perp}_{k - 1} (\intd U).
        \end{align*}
      \end{Lemma}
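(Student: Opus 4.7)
The plan is to derive the formula as a direct consequence of Lemma \ref{Lem_TraFo_SW} by specializing $F = u^\perp$. Taking the role of $k$ in Lemma \ref{Lem_TraFo_SW} to be $n-1$ and the index $j$ there to be $k-1$, one has $n - (n-1) + (k-1) = k$, so the left-hand side $\int_{\GOp(n,n-k+j)}h(L)\,\nu_{n-k+j}(\intd L)$ becomes $\int_{\GOp(n,k)} h(L)\,\nu_k(\intd L)$ and the outer Grassmannian on the right becomes $\GOp(u^\perp, k-1)$, as desired. This yields
\begin{align*}
\int_{\GOp(n,k)} h(L)\,\nu_k(\intd L) = d_{n,k-1,n-1} \int_{\GOp(u^\perp, k-1)} \int_{\GOp(U,k)} [u^\perp, L]^{k-1}\, h(L)\, \nu_k^U(\intd L)\, \nu_{k-1}^{u^\perp}(\intd U),
\end{align*}
where $\GOp(U,k) = \{L \in \GOp(n,k) : L \supset U\}$ since $\dim U = k-1 < k$.

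Next I parametrize the inner integral. For fixed $U$, the map $v \mapsto U + \Span\{v\}$ is a continuous $2$-to-$1$ surjection from $U^\perp \cap \Sn$ onto $\GOp(U,k)$ under which the uniform probability measure $\omega_{n-k+1}^{-1}\cH^{n-k}$ on $U^\perp \cap \Sn$ pushes forward to $\nu_k^U$. Since $u \in U^\perp$ (because $U \subset u^\perp$), I use spherical coordinates on the $(n-k)$-sphere $U^\perp \cap \Sn$ with pole $u$: every $v \in U^\perp \cap \Sn$ can be written uniquely as $v = tu + \sqrt{1-t^2}\,w$ with $t = \langle v,u\rangle \in [-1,1]$ and $w \in u^\perp \cap U^\perp \cap \Sn$, and the standard Jacobian gives $\cH^{n-k}(\intd v) = (1-t^2)^{(n-k-2)/2}\, \intd t\, \cH^{n-k-1}(\intd w)$.

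To identify the subspace determinant, observe that for $t \neq 0$ a vector in $L$ lies in $u^\perp$ only if its $v$-component vanishes, hence $u^\perp \cap L = U$. Extending an orthonormal basis of $U$ to one of $u^\perp$ and (separately) to one of $L$ by adjoining $v$, the parallelepiped defining $[u^\perp, L]$ has the ONB of $u^\perp$ together with $v$ as its edges; its volume is the distance from $v$ to $u^\perp$, namely $|\langle v,u\rangle| = |t|$. Hence $[u^\perp, L]^{k-1} = |t|^{k-1}$.

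What remains is the bookkeeping of the constant. The product defining $d_{n,k-1,n-1}$ telescopes to
\begin{align*}
d_{n,k-1,n-1} = \prod_{i=1}^{n-k} \frac{\Gamma(i/2)\,\Gamma((k+i)/2)}{\Gamma((k-1+i)/2)\,\Gamma((i+1)/2)} = \frac{\sqrt{\pi}\,\Gamma(n/2)}{\Gamma((n-k+1)/2)\,\Gamma(k/2)},
\end{align*}
and combining this with $\omega_m = 2\pi^{m/2}/\Gamma(m/2)$ yields $d_{n,k-1,n-1}/\omega_{n-k+1} = \omega_k/(2\omega_n)$, matching the claimed prefactor. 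The main technical points are the telescoping simplification of $d_{n,k-1,n-1}$ and the careful identification of the pushforward measure under the $2$-to-$1$ covering $v \mapsto U + \Span\{v\}$; the spherical coordinate decomposition and the evaluation $[u^\perp, L]^{k-1} = |t|^{k-1}$ are routine once the setup is in place.
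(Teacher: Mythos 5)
Your derivation is correct, and it follows exactly the route the paper indicates: the paper does not reprove this lemma but cites \cite[Corollary 4.2]{HugSchnSchu08} and remarks that it follows from Lemma \ref{Lem_TraFo_SW}, which is precisely your specialization $F=u^\perp$, $j\to k-1$, $k\to n-1$. Your identification of the pushforward of the normalized spherical measure on $U^\perp\cap\Sn$ with $\nu_k^U$, the evaluation $[u^\perp,L]=|t|$, and the constant bookkeeping $d_{n,k-1,n-1}/\omega_{n-k+1}=\omega_k/(2\omega_n)$ all check out.
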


      The following lemmas can be derived from Lemma \ref{Lem_TraFo_HSS} (see \cite[(24)]{SchnSchu02}, \cite[Lemma 4.3 and Proposition 4.5]{HugSchnSchu08}).

      \begin{Lemma}[{\cite[(24)]{SchnSchu02}}] \label{Lem_IntForm_Sph}
        Let $s \in \N_{0}$ and $n\ge 1$. Then
        \begin{equation*}
          \int _{\S} u^{s} \, \cH^{n - 1} (\intd u) = \1\{ s \text{ \rm even} \} \,
					2 \frac{\omega_{n + s}}{\omega_{s + 1}}\, Q^{\frac{s}{2}}.
        \end{equation*}
      \end{Lemma}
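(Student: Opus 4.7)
The overall approach is to use rotational symmetry to reduce the tensor integral to a one-dimensional Beta integral. First I would dispose of the case $s$ odd: the antipodal map $u \mapsto -u$ preserves $\cH^{n-1}$ on $\Sn$ but sends $u^s$ to $-u^s$, so the integral vanishes, matching the prefactor $\1\{s \text{ even}\}$.

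For even $s$ and $n \ge 2$, set $I_{n,s} := \int_\S u^s \, \cH^{n-1}(\intd u) \in \T^s$. Since $\cH^{n-1}$ on $\Sn$ is $\OO(n)$-invariant, $I_{n,s}$ is an $\OO(n)$-invariant symmetric tensor of rank $s$. By the classical invariant theory of $\OO(n)$ (equivalently, by polarizing the fact that every $\OO(n)$-invariant polynomial on $\R^n$ is a polynomial in $\|x\|^2$), the $\OO(n)$-invariant subspace of $\T^s$ is one-dimensional and spanned by $Q^{s/2}$. Hence $I_{n,s} = c_{n,s}\, Q^{s/2}$ for a scalar $c_{n,s}$ to be determined.

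To compute $c_{n,s}$, I would evaluate both sides at the diagonal tuple $(e_1, \ldots, e_1)$, where $e_1$ is any fixed unit vector. Under the symmetric-product convention of the paper one has $Q^{s/2}(e_1, \ldots, e_1) = 1$, so
\[
c_{n,s} = \int_\S \langle u, e_1 \rangle^s \, \cH^{n-1}(\intd u).
\]
Parametrizing $u = t e_1 + \sqrt{1 - t^2}\, w$ with $t \in [-1, 1]$ and $w \in e_1^\perp \cap \Sn$ and applying Fubini via the sphere's co-area formula reduces this to the Beta-type integral
\[
c_{n,s} = \omega_{n-1} \int_{-1}^1 t^s (1-t^2)^{(n-3)/2} \, \intd t = \omega_{n-1}\, B\bigl((s+1)/2, (n-1)/2\bigr).
\]
Expressing $B$ in terms of Gamma functions and using $\omega_k = 2\pi^{k/2}/\Gamma(k/2)$, the $\Gamma((n-1)/2)$ factor cancels and a straightforward rearrangement yields $c_{n,s} = 2\omega_{n+s}/\omega_{s+1}$, as claimed. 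The case $n = 1$ is verified directly: $\Sp{0} = \{\pm 1\}$ and $Q = 1$, so both sides equal $2$ when $s$ is even and $0$ when $s$ is odd.

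I do not expect any step to be a genuine obstacle; the argument is almost completely forced once the invariance reduction is made. The only point requiring care is the invariant-theoretic assertion that $Q^{s/2}$ spans the $\OO(n)$-fixed subspace of $\T^s$ for even $s$ (standard but essential), together with the bookkeeping of the symmetric-tensor-product normalization, which fixes the diagonal value $Q^{s/2}(e_1,\ldots,e_1)$ and hence the numerical constant.
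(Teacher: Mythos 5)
Your proof is correct. Note that the paper itself offers no proof of Lemma \ref{Lem_IntForm_Sph}: it is quoted from \cite[(24)]{SchnSchu02}, with the remark that it can be derived from the transformation formula of Lemma \ref{Lem_TraFo_HSS}; that route amounts to the same spherical-coordinates computation you carry out (the parametrization $u=te_1+\sqrt{1-t^2}\,w$ with Jacobian $(1-t^2)^{(n-3)/2}$ is exactly the one used elsewhere in the paper, e.g.\ in the proof of Lemma \ref{Lem_IntForm_VecScal}). Your reduction via the first fundamental theorem for $\OO(n)$ --- that the invariant subspace of $\T^s$ is spanned by $Q^{s/2}$, so that only the scalar $c_{n,s}=\int_{\S}\langle u,e_1\rangle^s\,\cH^{n-1}(\intd u)=\omega_{n-1}B(\tfrac{s+1}{2},\tfrac{n-1}{2})$ needs computing --- is a clean alternative to the more pedestrian derivation (binomial expansion of $(te_1+\sqrt{1-t^2}w)^s$ and induction on $n$) that yields the full tensor identity directly; both give $2\omega_{n+s}/\omega_{s+1}$ after simplification, and your separate treatment of the odd case and of $n=1$ is also in order.
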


The next lemma is used in the proofs of Lemmas \ref{Lem_IntForm_sineMeT} and \ref{Lem_IntForm_MeTc} below.

      \begin{Lemma}[{\cite[Lemma 4.3]{HugSchnSchu08}}] \label{Lem_IntForm_MeT}
        Let $i, k \in \N_0$ with $k \leq n$ and $n\ge 1$. Then
        \begin{equation*}
          \int _{\GOp(n, k)} Q(L)^i \, \nu_{k} (\intd L) = \frac{\Gamma(\frac{n}{2})
					\Gamma(\frac{k}{2} + i)}{\Gamma(\frac{n}{2} + i)\Gamma(\frac{k}{2})} \,Q^{i}.
        \end{equation*}
      \end{Lemma}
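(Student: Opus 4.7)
The plan is to recognize the integral as an $\OO(n)$-invariant symmetric tensor of rank $2i$ (hence a scalar multiple of $Q^i$) and then to determine the scalar by contracting with a single unit vector, thereby reducing everything to a one-dimensional moment on $\Sn$.

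Set $T := \int_{\GOp(n,k)} Q(L)^i \, \nu_k(\intd L) \in \T^{2i}$. For any $\vartheta \in \OO(n)$ and $x,y \in \R^n$ one has $Q(\vartheta L)(x,y) = Q(L)(\vartheta^{-1}x,\vartheta^{-1}y)$, and $\nu_k$ is $\OO(n)$-invariant; hence $T$ is itself $\OO(n)$-invariant. I would then invoke the classical fact that every $\OO(n)$-invariant symmetric tensor of even rank $2i$ over $\R^n$ is a scalar multiple of $Q^i$, so that $T = \alpha Q^i$ for some $\alpha = \alpha(n,k,i)\in\R$.

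To compute $\alpha$, I evaluate both sides as $2i$-linear forms at $(u,\ldots,u)$ for an arbitrary $u \in \Sn$. Since $Q(L)(u,u) = \|p_L u\|^2$ and $Q(u,u) = 1$, this yields $\alpha = \int_{\GOp(n,k)} \|p_L u\|^{2i} \, \nu_k(\intd L)$. Fixing $L_0 := \Span\{e_1,\ldots,e_k\}$ and exploiting the $\SO(n)$-invariance of $\nu_k$ (writing $L = \vartheta L_0$ with $\vartheta$ Haar-distributed and noting that $\vartheta^{-1} u$ is then uniform on $\Sn$), the Grassmannian integration transforms into
\begin{align*}
\alpha = \frac{1}{\omega_n}\int_{\Sn}(v_1^2 + \cdots + v_k^2)^i \, \cH^{n-1}(\intd v).
\end{align*}
This last expression is the $i$-th moment of a $\mathrm{Beta}(k/2,(n-k)/2)$-distributed random variable and equals $\Gamma(n/2)\Gamma(k/2+i)/(\Gamma(k/2)\Gamma(n/2+i))$, as one checks by a direct polar-coordinate computation (or, alternatively, by multinomial expansion combined with Lemma \ref{Lem_IntForm_Sph}).

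The degenerate cases $k\in\{0,n\}$ are consistent with the stated formula via the usual $\Gamma$-function conventions in \eqref{Form_Gam_Cont}. The main (and essentially only) conceptual point is the invariant-theoretic step, namely that even-rank $\OO(n)$-invariant symmetric tensors over $\R^n$ are generated by $Q$; everything else is a routine moment calculation.
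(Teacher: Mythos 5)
Your proof is correct, but it follows a genuinely different route from the one the paper points to. The paper does not prove Lemma \ref{Lem_IntForm_MeT} itself; it cites \cite[Lemma 4.3]{HugSchnSchu08} and indicates that the result is derived from the recursive Grassmannian decomposition of Lemma \ref{Lem_TraFo_HSS}, i.e.\ by splitting $\GOp(n,k)$ over $\GOp(u^\perp,k-1)$ and a one-dimensional parameter and inducting, with Lemma \ref{Lem_IntForm_Sph} feeding in the spherical moments. You instead observe that the integral is an $\OO(n)$-invariant symmetric tensor of rank $2i$, invoke the (correct and classical) fact that the space of such tensors is spanned by $Q^i$ (via the polarization correspondence with $\OO(n)$-invariant homogeneous polynomials of degree $2i$, which are multiples of $\|x\|^{2i}$), and then pin down the scalar by evaluating at $(u,\ldots,u)$, reducing everything to $\int_{\GOp(n,k)}\|p_L u\|^{2i}\,\nu_k(\intd L)$, i.e.\ the $i$th moment of a $\mathrm{Beta}(k/2,(n-k)/2)$ variable. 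All steps check out, including the boundary cases $k\in\{0,n\}$ and $n=1$ (where the uniformity of $\vartheta^{-1}u$ is moot but the integrand is even). Your argument is shorter and more conceptual, and it isolates cleanly why the answer must be a multiple of $Q^i$; the price is that pure invariance cannot be pushed to the neighbouring results the paper actually needs in the same breath, such as Lemma \ref{Lem_IntForm_sineMeT} or Proposition \ref{Prop_Main}, where the presence of the weight $[F,L]^a$ breaks full $\OO(n)$-invariance and the answer mixes $Q$ with $Q(F)$, so the transformation-formula machinery is still required there. As a freestanding proof of this particular lemma, yours is complete modulo the routine Beta-moment computation you defer.
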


The following lemma extends Lemma \ref{Lem_IntForm_MeT} (but the latter is used in the proof
of Lemma \ref{Lem_IntForm_sineMeT}). It
will be needed in the proof of Proposition \ref{Prop_Main}
(of which Lemma \ref{Lem_IntForm_sineMeT}  is a special case).

      \begin{Lemma}[{\cite[Proposition 4.5]{HugSchnSchu08}}] \label{Lem_IntForm_sineMeT}
        Let $a, i \in \N_0$, $k,r \in \{ 0, \ldots, n \}$ with $k + r \geq n\ge 1$,
				and let $F \in \GOp(n, r)$. Then
        \begin{align*}
          \int _{\GOp(n, k)} [F, L]^{a} Q(L)^{i} \, \nu_{k} (\intd L) & = e_{n, k, r, a}\,
					\frac {\Gamma( \frac {n + a} 2)} {\Gamma( \frac {n + a} 2 + i) \Gamma( \frac {k + a} 2)}
					\sum_{\beta = 0}^{i} (-1)^{\beta} \binom{i}{\beta} \Gamma(\tfrac{k + a}{2} + i - \beta) \\
          & \qquad \times \frac{\Gamma(\frac{n - k}{2} + \beta) \Gamma(\frac{a}{2} + 1)
					\Gamma(\frac{r}{2})}{\Gamma(\frac{n - k}{2}) \Gamma(\frac{a}{2} + 1 - \beta)
					\Gamma(\frac{r}{2} + \beta)}\, Q^{i - \beta} Q(F)^{\beta}
        \end{align*}
        with
        \begin{align*}
          e_{n, k, r, a} := \prod_{p = 0}^{n - r - 1} \frac {\Gamma(\frac {n - p} 2)
					\Gamma(\frac {k - p + a} 2)} {\Gamma(\frac {n - p + a} 2) \Gamma(\frac {k - p} 2)}.
        \end{align*}
      \end{Lemma}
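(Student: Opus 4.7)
The lemma is cited from \cite[Proposition 4.5]{HugSchnSchu08} and is used without reproof. Nevertheless, the natural strategy runs as follows. The starting point is the orthogonal decomposition $Q = Q(L) + Q(L^\perp)$ of the metric tensor (identifying symmetric $2$-tensors with self-adjoint endomorphisms), which yields the binomial expansion
\[
Q(L)^i = \sum_{\beta=0}^{i} (-1)^\beta \binom{i}{\beta} Q^{i-\beta} Q(L^\perp)^\beta.
\]
Substituting into the left-hand side reduces the task to evaluating, for each $\beta \in \{0,\ldots,i\}$, the integrals
\[
J_\beta := \int_{\GOp(n,k)} [F,L]^a Q(L^\perp)^\beta \, \nu_k(\intd L).
\]

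Next, I would invoke rotational equivariance. The integrand is covariant under the action of $\SO(F)$, which fixes both $F$ and $F^\perp$; together with the $\SO(n)$-invariance of $\nu_k$, this forces $J_\beta$ to be an $\SO(F)$-equivariant symmetric tensor of rank $2\beta$. Since $\SO(F)$-invariant symmetric $2$-tensors are spanned by $Q$ and $Q(F)$ (in the non-degenerate range $1 \le r \le n-1$), one obtains the representation
\[
J_\beta = \sum_{\gamma=0}^{\beta} d_{\beta,\gamma} Q^{\beta-\gamma} Q(F)^\gamma
\]
for scalars $d_{\beta,\gamma} = d_{\beta,\gamma}(n,k,r,a)$ depending only on the stated parameters.

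To pin down the $d_{\beta,\gamma}$, I would test the multilinear form of $J_\beta$ against $2\beta$-tuples of vectors drawn from an orthonormal basis adapted to the splitting $\R^n = F \oplus F^\perp$. Each resulting scalar integral over $\GOp(n,k)$ can be reduced --- either by parameterizing $L$ via its principal angles relative to $F$, or by iterated application of Lemma \ref{Lem_TraFo_SW} with suitably chosen subspaces --- to a product of beta-type integrals that evaluate to explicit Gamma-function quotients. Comparing these scalar evaluations with the tensorial representation above yields a linear system for the $d_{\beta,\gamma}$ that can be solved row by row.

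The main obstacle will be the Gamma-function bookkeeping: reorganizing the resulting products and linear systems into the compact closed form of the lemma, featuring the constant $e_{n,k,r,a}$ and the alternating single sum over $\beta$. This requires Legendre's duplication formula together with the continuation identity \eqref{Form_Gam_Cont}, the latter being essential to handle degenerate cases in which some Gamma-function arguments become nonpositive integers (for instance when $r \in \{0,n\}$ or when $n-k$ is small). A final consistency check at $a=0$ should reproduce Lemma \ref{Lem_IntForm_MeT}, confirming the normalization.
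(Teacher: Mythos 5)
Your opening observation is only half right, and the half you miss is where the paper's entire proof lives. The reference \cite[Proposition 4.5]{HugSchnSchu08} establishes the formula only for $k,r\ge 1$, whereas the paper needs it (for instance inside the proof of Proposition \ref{Prop_Main}) also in the boundary cases $k=0$ or $r=0$. The proof given in the paper consists precisely of checking these cases: the only nontrivial one is $k=0$, $r=n$, $i\ge 1$, $a>0$, where the left-hand side is the zero tensor (since $Q(\{0\})=0$ while $[F,\{0\}]=1$ by convention), so one must verify that the alternating sum on the right-hand side vanishes; the paper does this by reducing it to relation \eqref{lemZeilmod} and using $\Gamma(1-i)^{-1}=0$ for $i\ge 1$. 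Your proposal performs no such verification, so it does not supply the one thing that is actually proved at this point of the paper.

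As for the from-scratch strategy you sketch for the main range $1\le k,r\le n-1$: the skeleton (expand $Q(L)^i=(Q-Q(L^{\perp}))^i$, use $\SO(F)$-equivariance to write $J_\beta$ as a combination of $Q^{\beta-\gamma}Q(F)^{\gamma}$, then determine the scalars) is a reasonable plan and is in the spirit of how such formulas are derived in \cite{HugSchnSchu08}. But as written it is an outline rather than a proof: the entire quantitative content --- evaluating the test integrals, solving the resulting linear system, and arriving at the specific constant $e_{n,k,r,a}$ and the single alternating sum over $\beta$ --- is deferred to ``Gamma-function bookkeeping,'' which is exactly where all the difficulty sits. Nothing in the sketch would allow a reader to reconstruct, or even check, the claimed coefficients.
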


\begin{proof}
Although this lemma is stated in \cite[Proposition 4.5]{HugSchnSchu08} only for $k,r\ge 1$, it is easy
to check that it remains true for $k=0$ (and $r=n$) and for $r=0$ (and $k=n$) with $n\ge 1$ as well as for $n=k=r=1$.
The only nontrivial case that has to be checked concerns the assertion for $k=0$, $r=n$ and $i\ge 1$,
where we have to show that the right
side is the zero tensor. For this we can assume that $a>0$, since the case $a=0$ is covered by
Lemma \ref{Lem_IntForm_MeT}. Up to irrelevant constants, the factor on the right side equals
\begin{align*}
&\sum_{\beta=0}^i(-1)^\beta\binom{i}{\beta} \Gamma(\tfrac{a}{2} + i - \beta)
          \frac{\Gamma(\frac{n }{2} + \beta)
					}{\Gamma(\frac{a}{2} + 1 - \beta)
					\Gamma(\frac{n}{2} + \beta)}\\
					&\qquad=(-1)^i\sum_{\beta=0}^i(-1)^\beta\binom{i}{\beta}
          \frac{\Gamma(\tfrac{a}{2} + \beta)
					}{\Gamma(\frac{a}{2} + 1-i + \beta)} =0,
\end{align*}
which follows from  relation \eqref{lemZeilmod}, since $\Gamma(1-i)^{-1}=0$ for $i\ge 1$.
\end{proof}

      From Lemma \ref{Lem_IntForm_MeT} we deduce the next result, which will
			be used in the proofs of Lemma \ref{Lem_IntForm_MeTVec} and Proposition \ref{Prop_Main}.

      \begin{Lemma} \label{Lem_IntForm_MeTc}
        Let $i, j, k \in \N_0$ with $0\le k \leq n$. Then
        \begin{equation*}
          \int _{\GOp(n, k)} Q(L)^i Q(L^{\perp})^j \, \nu_{k} (\intd L) =
					\frac{\Gamma(\frac{n}{2}) \Gamma(\frac{k}{2} + i)
					\Gamma(\frac{n - k}{2} + j)}{\Gamma(\frac{n}{2} + i + j) \Gamma(\frac{k}{2})
					\Gamma(\frac{n - k}{2})} \,Q^{i + j}.
        \end{equation*}
      \end{Lemma}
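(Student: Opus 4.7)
The plan is to reduce everything to the already established Lemma \ref{Lem_IntForm_MeT} by exploiting the orthogonal decomposition of the metric tensor. Since $Q(L)+Q(L^{\perp})=Q$ (as symmetric bilinear forms on $\R^n$), and since $Q$ and $Q(L)$ commute in the symmetric tensor algebra, I may binomially expand
$$
Q(L^{\perp})^{j}=\bigl(Q-Q(L)\bigr)^{j}=\sum_{\ell=0}^{j}\binom{j}{\ell}(-1)^{\ell}Q^{j-\ell}Q(L)^{\ell}.
$$
Multiplying by $Q(L)^{i}$, integrating termwise against $\nu_k$, and invoking Lemma \ref{Lem_IntForm_MeT} for each $Q(L)^{i+\ell}$ gives
$$
\int_{\GOp(n,k)}Q(L)^{i}Q(L^{\perp})^{j}\,\nu_{k}(\intd L)=\frac{\Gamma(\tfrac{n}{2})}{\Gamma(\tfrac{k}{2})}\,Q^{i+j}\sum_{\ell=0}^{j}\binom{j}{\ell}(-1)^{\ell}\frac{\Gamma(\tfrac{k}{2}+i+\ell)}{\Gamma(\tfrac{n}{2}+i+\ell)}.
$$

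What remains is the scalar identity
$$
\sum_{\ell=0}^{j}\binom{j}{\ell}(-1)^{\ell}\frac{\Gamma(\tfrac{k}{2}+i+\ell)}{\Gamma(\tfrac{n}{2}+i+\ell)}=\frac{\Gamma(\tfrac{k}{2}+i)\,\Gamma(\tfrac{n-k}{2}+j)}{\Gamma(\tfrac{n-k}{2})\,\Gamma(\tfrac{n}{2}+i+j)}.
$$
I would prove this via a Beta-function representation: assuming $n>k$, write
$$
\frac{\Gamma(\tfrac{k}{2}+i+\ell)}{\Gamma(\tfrac{n}{2}+i+\ell)}=\frac{1}{\Gamma(\tfrac{n-k}{2})}\int_{0}^{1}t^{\frac{k}{2}+i+\ell-1}(1-t)^{\frac{n-k}{2}-1}\,\intd t,
$$
exchange summation and integration, and use $\sum_{\ell=0}^{j}\binom{j}{\ell}(-t)^{\ell}=(1-t)^{j}$ to collapse the sum into a single Beta integral $B(\tfrac{k}{2}+i,\tfrac{n-k}{2}+j)$. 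This yields exactly the right-hand side.

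The only point requiring a little care is the boundary case $k=n$, where the Beta integral degenerates. In that case only the term $\ell=0$ contributes on both sides (the left because $Q(L^{\perp})=0$ forces $j=0$ or the integrand vanishes, the right via the pole of $\Gamma(\tfrac{n-k}{2})$ through relation \eqref{Form_Gam_Cont}), and one checks the identity by direct inspection; symmetrically for $k=0$. I do not expect any genuine obstacle here, as the entire argument is a routine binomial manipulation bootstrapped on Lemma \ref{Lem_IntForm_MeT}; the Beta-integral trick is just the cleanest way to evaluate the alternating sum.
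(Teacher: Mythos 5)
Your proposal is correct and follows essentially the same route as the paper: expand $Q(L^\perp)^j=(Q-Q(L))^j$ binomially, integrate termwise via Lemma \ref{Lem_IntForm_MeT}, and evaluate the resulting alternating sum of Gamma ratios, treating $k\in\{0,n\}$ separately. The only (cosmetic) difference is that the paper collapses the sum by citing the appendix identity \eqref{lemZeilmod}, whereas you derive the same identity on the spot via a Beta-integral representation.
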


      \begin{proof}
			The cases where $k\in\{0,n\}$ can be checked easily by distinguishing whether $i,j=0$ or not. Hence
			we can assume that $1\le k\le n-1$.
        Let $I$ denote the integral we are interested in. By expansion of
				$Q(L^{\perp})^{j} = (Q - Q(L))^{j}$ and Lemma \ref{Lem_IntForm_MeT} we obtain
        \begin{align*}
          I & = \sum_{l = 0}^{j} (-1)^{l} \binom{j}{l} Q^{j - l} \int _{\GOp(n, k)}
					Q(L)^{i + l} \, \nu_{k} (\intd L) \\
          & = \frac{\Gamma(\frac{n}{2})}{\Gamma(\frac{k}{2})} \sum_{l = 0}^{j} (-1)^{l}
					\binom{j}{l} \frac{\Gamma(\frac{k}{2} + i + l)}{\Gamma(\frac{n}{2} + i + l)} \,Q^{i + j}.
        \end{align*}
        Then  relation \eqref{lemZeilmod} yields the assertion.
      \end{proof}

The next lemma will be used at the beginning of Section \ref{subsec5.2}.

      \begin{Lemma} \label{Lem_IntForm_MeTVec}
        Let $j, l, s \in \N_{0}$ with $j < n$, $L \in \GOp(n, j)$ and $u \in L^\perp \cap \S$. Then
        \begin{align*}
        	\int_{\SO(n)} Q(\vartheta L)^l (\vartheta u)^{s} \, \nu(\intd\vartheta) = \frac{\Gamma(\frac {n} {2}) \Gamma(\frac {j} {2} + l) \Gamma(\frac {s + 1} {2})}{\sqrt \pi \Gamma(\frac {n + s} {2} + l) \Gamma(\frac {j} {2})}\, Q^{l + {\frac s 2}},
        \end{align*}
        if $s$ is even. The same relation holds if the integration is extended over $\OO(n)$.
				If $s$ is odd and $n\ge 2$ (or $n=1$ and the integration is extended over $\OO(1)$), then the integral
				vanishes.
      \end{Lemma}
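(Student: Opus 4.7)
The strategy is to disintegrate the Haar measure $\nu$ along the orbit map $\vartheta\mapsto(\vartheta L,\vartheta u)$ and then reduce the computation to the spherical and Grassmannian integrals of Lemma \ref{Lem_IntForm_Sph} and Lemma \ref{Lem_IntForm_MeTc}.

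Since $j<n$, the set $M:=\{(L',u'):L'\in\GOp(n,j),\,u'\in(L')^\perp\cap\S\}$ is a homogeneous $\SO(n)$-space under the diagonal action: the stabilizer of $L$ in $\SO(n)$ equals $S(\OO(L)\times\OO(L^\perp))$, which (for $j\ge 1$) contains reflections in $L^\perp$ combined with compensating reflections in $L$ and thus acts transitively on $L^\perp\cap\S$; when $j=0$ and $n\ge 2$, $\SO(n)$ itself is transitive on $\S$. Hence the image of $\nu$ under the orbit map is the unique $\SO(n)$-invariant probability measure on $M$, and it disintegrates as: first sample $L'\sim\nu_j$, then sample $u'$ uniformly on $(L')^\perp\cap\S$ with respect to $\omega_{n-j}^{-1}\cH^{n-j-1}$. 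Consequently,
\begin{equation*}
\int_{\SO(n)}Q(\vartheta L)^l(\vartheta u)^s\,\nu(\intd\vartheta)=\int_{\GOp(n,j)}Q(L')^l\,\frac{1}{\omega_{n-j}}\int_{(L')^\perp\cap\S}(u')^s\,\cH^{n-j-1}(\intd u')\,\nu_j(\intd L').
\end{equation*}

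The inner integral is evaluated by Lemma \ref{Lem_IntForm_Sph} in the $(n-j)$-dimensional subspace $(L')^\perp$: it vanishes for odd $s$ (settling the last assertion), and equals $2(\omega_{n-j+s}/\omega_{s+1})Q((L')^\perp)^{s/2}$ when $s$ is even. The remaining outer integral $\int_{\GOp(n,j)}Q(L')^l Q((L')^\perp)^{s/2}\,\nu_j(\intd L')$ is then computed by Lemma \ref{Lem_IntForm_MeTc} under the dictionary $k=j$, $i=l$, and the lemma's dummy ``$j$'' equal to $s/2$. Combining both evaluations and simplifying constants via $\omega_m=2\pi^{m/2}/\Gamma(m/2)$---in particular using
\begin{equation*}
\frac{2\omega_{n-j+s}}{\omega_{n-j}\omega_{s+1}}=\frac{\Gamma(\tfrac{s+1}{2})\Gamma(\tfrac{n-j}{2})}{\sqrt\pi\,\Gamma(\tfrac{n-j+s}{2})}
\end{equation*}
---the factors $\Gamma(\tfrac{n-j}{2})$ and $\Gamma(\tfrac{n-j+s}{2})$ telescope against those supplied by Lemma \ref{Lem_IntForm_MeTc}, leaving exactly the claimed Gamma-function coefficient times $Q^{l+s/2}$.

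For the $\OO(n)$ variant: when $s$ is even, the $\SO(n)$-average is already a scalar multiple of $Q^{l+s/2}$ and is therefore $\OO(n)$-invariant, so averaging over the other coset of $\SO(n)$ in $\OO(n)$ reproduces the same value. For odd $s$ with $n\ge 2$, vanishing has been obtained above; for $n=1$ (which forces $j=0$), pairing $\vartheta\in\OO(1)=\{\pm I\}$ with $-\vartheta$ yields immediate cancellation. The only substantive step is the disintegration in the first paragraph: once it is in place, the remainder is careful but routine manipulation of Gamma-function identities.
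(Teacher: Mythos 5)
Your proof is correct and follows essentially the same route as the paper's: both reduce the integral to an inner spherical average over $(L')^\perp\cap\S$ (which you obtain by disintegrating the orbit measure on the flag space, while the paper inserts an average over $\SO(L^\perp)$ via the right invariance of $\nu$ and Fubini's theorem) and then evaluate it with Lemma \ref{Lem_IntForm_Sph} followed by Lemma \ref{Lem_IntForm_MeTc}, with identical constant bookkeeping. The only point your argument does not cover is the degenerate case $n=1$ (hence $j=0$) with $s$ even, where $M$ is not a homogeneous $\SO(1)$-space; as in the paper, this is settled by a direct one-line check distinguishing $l=0$ from $l\ge 1$.
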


      \begin{proof}
			The case $n=1$, $j=0$ is checked directly by distinguishing $l=0$ or $l\neq 0$. Hence let $n\ge 2$.
       Let $I$ denote the integral we are interested in. Due to symmetry, $I = 0$ if $s$ is odd. Therefore, let $s$ be even. Let $\rho\in\SO(L^\perp)$. Then, by the right invariance of $\nu$ and Fubini's theorem we obtain
        \begin{align*}
          I & = \int_{\SO(n)}Q(\vartheta\rho L)^l (\vartheta\rho u)^{s}\, \nu(\intd\vartheta) \\
          & = \int_{\SO(L^\perp)}\int_{\SO(n)}Q(\vartheta\rho L)^l (\vartheta\rho u)^{s}\, \nu(\intd\vartheta)\, \nu^{L^{\perp}}(\intd\rho) \\
          & = \int_{\SO(L^\perp)}\int_{\SO(n)}Q(\vartheta L)^l (\vartheta\rho u)^{s}\, \nu(\intd\vartheta)\, \nu^{L^{\perp}}(\intd\rho) \\
          & = \int_{\SO(n)}Q(\vartheta L)^l\vartheta \int_{\SO(L^\perp)} (\rho u)^{s}\, \nu^{L^{\perp}}(\intd\rho)\, \nu(\intd\vartheta).
        \end{align*}
       Lemma \ref{Lem_IntForm_Sph}, applied in $L^\perp$ with $\text{dim}(L^\perp)\ge 1$, yields
        \begin{align*}
          \int_{\SO(L^\perp)} (\rho u)^{s}\, \nu^{L^{\perp}}(\intd\rho)  = \frac 1 {\omega_{n - j}} \int_{\S \cap L^\perp} v^{s} \, \cH^{n - j - 1} (\intd v) = 2 \frac {\omega_{n - j + s}} {\omega_{s + 1} \omega_{n - j}} \,Q(L^\perp)^{\frac s 2},
        \end{align*}
        and hence we get
        \begin{align}
          I & = 2 \frac {\omega_{n - j + s}} {\omega_{s + 1} \omega_{n - j}} \int_{\SO(n)}Q(\vartheta L)^l Q(\vartheta L^\perp)^{\frac s 2} \, \nu(\intd\vartheta) \nonumber \\
          & = 2 \frac {\omega_{n - j + s}} {\omega_{s + 1} \omega_{n - j}} \int_{\GOp (n, j)} Q(U)^l Q(U^\perp)^{\frac s 2} \, \nu_j(\intd U). \nonumber
        \end{align}
        From Lemma \ref{Lem_IntForm_MeTc} we conclude that
        \begin{align*}
          I = 2 \frac {\omega_{n - j + s}} {\omega_{s + 1} \omega_{n - j}} \frac {\Gamma(\frac {n} {2}) \Gamma(\frac {j} {2} + l) \Gamma(\frac {n - j + s} {2})} {\Gamma(\frac {n + s} {2} + l) \Gamma(\frac {j} {2}) \Gamma(\frac {n - j} {2})}\, Q^{l + {\frac s 2}},
        \end{align*}
         and thus we obtain the assertion.
      \end{proof}

The following lemma will be required in Section \ref{subsec5.3}.

      \begin{Lemma} \label{Lem_IntForm_VecScal}
        Let $u, v \in \mathbb{S}^{n-1}$, $i, t \in \N_0$ and $n\ge 1$. Then
        \begin{align*}
          \int_{\SO(n)} (\rho v)^{i} \langle u, \rho v \rangle^t \, \nu(\intd \rho)
					= \frac {\Gamma(\frac n 2) \Gamma(t + 1)} {2^t \sqrt \pi \Gamma(
					\frac {n + i + t} 2)} \sum_{x = (\frac {i - t} 2)^+}^{\lfloor
					\frac i 2 \rfloor} \binom{i}{2x} \frac {\Gamma(x + \frac 1 2)} {\Gamma(\frac{t - i} 2 + x + 1)} u^{i - 2x} Q^x,
        \end{align*}
        if $i + t$ is even. The same relation holds if the integration is extended over $\OO(n)$.
				If $i + t$ is odd and $n\ge 2$ (or $n=1$ and the integration is extended over $\OO(1)$),
				then the integral on the left side vanishes.
      \end{Lemma}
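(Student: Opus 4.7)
The strategy is to reduce the group integral to a spherical integral, evaluate the latter by parametrization and binomial expansion, and repackage the result.

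For $n\ge 2$, the map $\rho\mapsto\rho v$ pushes the Haar probability measure on $\SO(n)$ (and equally on $\OO(n)$) forward to the normalized spherical measure $\omega_n^{-1}\cH^{n-1}$ on $\Sn$, so
\begin{align*}
I := \int_{\SO(n)}(\rho v)^i\langle u,\rho v\rangle^t\,\nu(\intd\rho) = \frac{1}{\omega_n}\int_{\Sn} w^i\langle u,w\rangle^t\,\cH^{n-1}(\intd w).
\end{align*}
For $i+t$ odd, the substitution $w\mapsto -w$ (which preserves $\cH^{n-1}$ on $\Sn$) gives $I=-I$, hence $I=0$. The case $n=1$ is verified by direct enumeration of $\OO(1)=\{\pm\id\}$ and $\SO(1)=\{\id\}$; the vanishing statement properly omits $\SO(1)$, since there $I=v^i\langle u,v\rangle^t$ need not vanish.

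For $n\ge 2$ and $i+t$ even, I parametrize $w=su+\sqrt{1-s^2}\,\xi$ with $s\in[-1,1]$ and $\xi\in u^\perp\cap\Sn$, using $\cH^{n-1}(\intd w)=(1-s^2)^{(n-3)/2}\cH^{n-2}(\intd\xi)\,\intd s$. Expanding $w^i$ by the symmetric binomial formula, applying Lemma \ref{Lem_IntForm_Sph} in the $(n-1)$-dimensional subspace $u^\perp$ (which forces the $\xi$-exponent $k=2y$ to be even and produces $2(\omega_{n-1+2y}/\omega_{2y+1})Q(u^\perp)^y$), and evaluating the remaining $s$-integral as the Beta integral $\Gamma(\tfrac{i-2y+t+1}{2})\Gamma(\tfrac{n-1}{2}+y)/\Gamma(\tfrac{n+i+t}{2})$, I obtain
\begin{align*}
I = \frac{2}{\omega_n\Gamma(\tfrac{n+i+t}{2})}\sum_{y=0}^{\lfloor i/2\rfloor}\binom{i}{2y}\frac{\omega_{n-1+2y}}{\omega_{2y+1}}\Gamma(\tfrac{i-2y+t+1}{2})\Gamma(\tfrac{n-1}{2}+y)\,u^{i-2y}Q(u^\perp)^y.
\end{align*}

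For the final repackaging, I substitute $\omega_m=2\pi^{m/2}/\Gamma(m/2)$ to cancel spherical volumes against Gamma values, expand $Q(u^\perp)^y=(Q-u^2)^y$ by the binomial theorem, and reindex by $x=y-z$ to collect the coefficients of $u^{i-2x}Q^x$. The resulting inner sum over $y\ge x$ must collapse, by Legendre's duplication formula combined with a Chu--Vandermonde-type identity, to the single coefficient $\binom{i}{2x}\Gamma(x+\tfrac 12)/\Gamma(\tfrac{t-i}{2}+x+1)$ claimed in the lemma. The lower bound $x\ge ((i-t)/2)^+$ is automatic, since $1/\Gamma(\tfrac{t-i}{2}+x+1)=0$ whenever $\tfrac{t-i}{2}+x$ is a negative integer by \eqref{Form_Gam_Cont}. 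I expect the Gamma-function bookkeeping in this final collapse to be the main obstacle; as a cross-check, one can recognize $\omega_n I$ as the contraction on $t$ of the $i+t$ tensor slots of $2(\omega_{n+i+t}/\omega_{i+t+1})Q^{(i+t)/2}$ with $u^{\otimes t}$ and evaluate this contraction directly via the pairings-of-slots combinatorics for $Q^{(i+t)/2}$, indexing pairings by the number $b=i-2x$ of pairs joining a contracted $u$-slot to a free slot.
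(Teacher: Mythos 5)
Your proposal follows essentially the same route as the paper's proof: reduction of the group integral to a spherical one, the parametrization $w=su+\sqrt{1-s^2}\,\xi$ with Jacobian $(1-s^2)^{(n-3)/2}$, Lemma \ref{Lem_IntForm_Sph} applied in $u^\perp$ together with a Beta integral, expansion of $Q(u^\perp)^y=(Q-u^2)^y$, and a Legendre/Vandermonde collapse of the resulting double sum; your intermediate formula for $I$ agrees exactly with the paper's. The only steps you leave unexecuted --- the collapse of the inner sum over $y\ge x$, which the paper carries out via its appendix identity \eqref{lemZeilmod} combined with \eqref{Form_Gam_Cont}, and the check that for $n=1$ with $i+t$ even the right-hand side really reduces to $(\pm 1)^t v^i$ (this needs \eqref{lemZeilfund}, not merely enumeration of $\OO(1)$) --- are precisely the computations the paper performs, and they do go through.
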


      \begin{proof} First, we assume that $n\ge 2$.
       Let $I$ denote the integral we are interested in. By symmetry,
				$I = 0$ if $i + t$ is odd. Thus, in the following
				we assume that $i + t$ is even. Applying the transformation
        \begin{align*}
          f: [-1,1] \times (\S \cap u^\perp) \rightarrow \S, \, (z, w) \mapsto z u + \sqrt{1 - z^2} w,
        \end{align*}
        with Jacobian $\cJ f(z,w) = \sqrt{1-z^2}^{n - 3}$ to the integral $I$, we get
        \begin{align*}
          I & = \frac 1 {\omega_{n}} \int_{\S} v^{i} \langle u, v \rangle^t \, \cH^{n - 1} (\intd v) \\
          & = \frac 1 {\omega_{n}} \int_{-1}^1 \int_{\S \cap u^\perp}
					\left( 1 - z^2 \right)^{\frac {n - 3} 2} \left( z u +
					\sqrt{1 - z^2} w \right)^{i} {{\left\langle  u, z u +
					\sqrt{1 - z^2} w  \right\rangle}}^{t} \, \cH^{n - 2}(\intd w) \, \intd z.
        \end{align*}
        Binomial expansion of $(zu + \sqrt{1 - z^2}w)^i$ yields
        \begin{align*}
          I & = \frac 1 {\omega_{n}} \sum_{m = 0}^{i} \binom{i}{m} u^{i - m}
					\underbrace{\int_{-1}^1 z^{t + i - m} \left( 1 - z^2 \right)^{\frac {n + m - 3} 2}
					\, \intd z}_{ = \1 \{ m \text{ even} \} B(\frac {t + i - m + 1} 2, \frac {n + m - 1} 2) }
					\underbrace{\int_{\S \cap u^\perp} w^{m}\, \cH^{n - 2}(\intd w)}_{ =: I' },
        \end{align*}
				where  $B(\cdot,\cdot)$ denotes the Beta function.
        From   Lemma \ref{Lem_IntForm_Sph}, we obtain
        \begin{align*}
          I' & = \1 \{m \text{ even} \} 2 \frac {\omega_{n + m - 1}} {\omega_{m + 1}} \,Q(u^\perp)^{\frac {m} 2},
        \end{align*}
        and thus
        \begin{align*}
          I & = \sum_{m = 0}^{\lfloor \frac i 2 \rfloor} \binom{i}{2m} \frac {\Gamma(\frac {t + i + 1} 2 - m)
					\Gamma(\frac {n - 1} 2 + m)} {\Gamma(\frac {n + i + t} 2)}
					{\frac {2 \omega_{n + 2m - 1}} {\omega_{n} \omega_{2m + 1}}} u^{i - 2m} Q(u^\perp)^{m} \\
          & = \frac {\Gamma(\frac {n} 2)} {\pi \Gamma(\frac {n + i + t} 2)} \sum_{m = 0}^{\lfloor \frac i 2 \rfloor}
					\binom{i}{2m} \Gamma(m + \tfrac {1} 2) \Gamma(\tfrac {t + i + 1} 2 - m) u^{i - 2m} Q(u^\perp)^{m}.
        \end{align*}
        Since $Q(u^\perp) = Q - u^2$, binomial expansion yields
        \begin{align*}
          Q(u^\perp)^{m} & = \sum_{x = 0}^{m} (-1)^{m + x} \binom{m}{x} u^{2m - 2x} Q^{x}.
        \end{align*}
        Legendre's duplication formula gives
        \begin{align*}
          \binom{i}{2m} \binom{m}{x} \Gamma(m + \tfrac {1} 2)
            & = \binom{i}{2x} \Gamma(x + \tfrac{1}{2}) \frac{1}{(m - x)!} \frac{\Gamma(\frac{i + 1}{2} - x) \Gamma(\frac{i}{2} - x + 1)}{\Gamma(\frac{i + 1}{2} - m) \Gamma(\frac{i}{2} - m + 1)} \\
            & = \binom{i}{2x} \Gamma(x + \tfrac{1}{2}) \binom{\lfloor \frac i 2 \rfloor - x}{m - x} \frac{\Gamma(\lfloor \frac{i + 1}{2} \rfloor - x + \frac{1}{2})}{\Gamma(\lfloor \frac{i + 1}{2} \rfloor - m + \frac{1}{2})},
        \end{align*}
        and thus we obtain by a change of the order of summation
        \begin{align*}
          I & = \frac {\Gamma(\frac {n} 2)} {\pi \Gamma(\frac {n + i + t} 2)} \sum_{x = 0}^{\lfloor \frac i 2 \rfloor} \binom{i}{2x} \Gamma(x + \tfrac{1}{2})\Gamma(\lfloor \tfrac{i + 1}{2} \rfloor - x + \tfrac{1}{2}) u^{i - 2x} Q^{x} \\
          & \qquad \qquad \qquad \qquad \times \sum_{m = x}^{\lfloor \frac i 2 \rfloor} (-1)^{m + x} \binom{\lfloor \frac i 2 \rfloor - x}{m - x} \frac{\Gamma(\tfrac {t + i + 1} 2 - m)}{\Gamma(\lfloor \frac{i + 1}{2} \rfloor - m + \frac{1}{2})}.
        \end{align*}
        We denote the sum with respect to $m$ by $S_1$. An index shift by $x$, applied to $S_1$, yields
        \begin{align*}
          S_1 & = \sum_{m = 0}^{\lfloor \frac i 2 \rfloor - x} (-1)^{m} \binom{\lfloor \frac i 2 \rfloor - x}{m} \frac{\Gamma(\tfrac {t + i + 1} 2 - x - m)}{\Gamma(\lfloor \frac{i + 1}{2} \rfloor - x - m + \frac{1}{2})}.
        \end{align*}
        Now we conclude from relation \eqref{lemZeilmod} that
        \begin{align*}
          S_1 & = (-1)^{\lfloor \frac i 2 \rfloor - x} \sum_{m = 0}^{\lfloor \frac i 2 \rfloor - x}
              (-1)^{m}
              \binom{\lfloor \frac i 2 \rfloor - x}{m}
              \frac {\Gamma(\frac {t + i + 1} 2 - \lfloor \frac i 2 \rfloor + m)} {\Gamma(\lfloor \frac{i + 1}{2} \rfloor - \lfloor \frac i 2 \rfloor + m + \frac{1}{2})} \\
            & = (-1)^{\lfloor \frac i 2 \rfloor - x} \frac {\Gamma(\frac {t + i + 1} 2 - \lfloor \frac i 2 \rfloor) \Gamma(\overbrace{\textstyle \lfloor \frac{i + 1}{2} \rfloor + \lfloor \frac i 2 \rfloor}^{ = i} - \frac {t + i + 1} 2 - x + \frac{1}{2})} {\Gamma(\lfloor \frac{i + 1}{2} \rfloor - x + \frac{1}{2}) \Gamma(\lfloor \frac{i + 1}{2} \rfloor - \frac {t + i + 1} 2 + \frac{1}{2})} \\
            & = \overbrace{(-1)^{i + \lfloor \frac{i + 1}{2} \rfloor + \lfloor \frac i 2 \rfloor}}^{ = (-1)^{2i} = 1} \frac {\overbrace{\Gamma(\tfrac {t + i + 1} 2 - \lfloor \tfrac i 2 \rfloor) \Gamma( \tfrac {t + i + 1} 2 - \lfloor \tfrac{i + 1}{2} \rfloor + \tfrac{1}{2})}^{ \scriptstyle = \Gamma(\frac {t + 1} 2) \Gamma(\frac {t} 2 + 1)}} {\Gamma(\lfloor \frac{i + 1}{2} \rfloor - x + \frac{1}{2}) \Gamma(\frac {t - i} 2 + x + 1)},
        \end{align*}
        where we used \eqref{Form_Gam_Cont} with $c = \frac {t + i + 1} 2 -
				\lfloor \frac{i + 1}{2} \rfloor - \frac{1}{2}\in \N_{0}$ and
				$ m = i - \lfloor \frac{i + 1}{2} \rfloor - x \in \N_{0}$.
				We notice that $S_1 = 0$ if $x < \frac {i - t} 2 $. Thus we obtain the
				assertion by another application of Legendre's duplication formula.
				
				It remains to confirm the assertion if $n=1$ and $i+t$ is even (all other assertions are easy to check).
				In this case, $u=\pm v$ and therefore the left-hand side of the asserted equation equals $(\pm 1)^t v^i$.
				Using first Legendre's duplication formula repeatedly, then relation \eqref{lemZeilfund}, and finally again
				Legendre's duplication formula, we see that
				the right-hand side equals
				\begin{align*}
				&\frac { \Gamma(t + 1)} {2^t  \Gamma(
					\frac {1 + i + t} 2)} \sum_{x =0}^{\lfloor
					\frac i 2 \rfloor} \binom{i}{2x} \frac {\Gamma(x + \frac 1 2)} {\Gamma(\frac{t - i} 2 + x + 1)} (\pm 1)^iv^{i} \\
					&\qquad = \frac { \Gamma(t + 1)} {2^t  \Gamma(
					\frac {1 + i + t} 2)}\sqrt\pi \Gamma(\lfloor\tfrac{i+1}{2}\rfloor+\tfrac{1}{2} )
				\sum_{x =0}^{\lfloor
					\frac i 2 \rfloor}\binom{\lfloor\frac{i}{2}\rfloor}{x}\frac{1}
					{\Gamma(\frac{t-i}{2}+1+x)\Gamma(\lfloor\frac{i+1}{2}\rfloor+\frac{1}{2}-x)}\\
					&\qquad =\frac { \Gamma(t + 1)} {2^t  \Gamma(
					\frac {1 + i + t} 2)}\sqrt\pi 
					\frac{\Gamma(\lfloor\tfrac{i+1}{2}\rfloor+\tfrac{1}{2}+\tfrac{t-i}{2}+1+ \lfloor\tfrac{i}{2}
					 \rfloor-1 )}{ 
					\Gamma (
					\tfrac{t-i}{2}+1+ \lfloor\tfrac{i}2 \rfloor ) \Gamma (
					\tfrac{t-i}{2}+ \lfloor\tfrac{i+1}2 \rfloor+\tfrac{1}{2})}(\pm 1)^iv^{i} = (\pm 1)^iv^{i} ,
				\end{align*}
				which confirms the assertion.
      \end{proof}

The next lemma will be useful in the proof of Proposition \ref{Prop_Main}.
    	
      \begin{Lemma} \label{Lem_Trafo_G(U,j)}
        Let $j, k, n \in \N_0$ with $j + k \leq n$, $n\ge 1$, and $U \in \GOp(n, j)$. Then,
				for any integrable function $f: \GOp(U, j + k) \rightarrow \R$,
        \begin{align*}
          \int_{\GOp(U^\perp,k)} f(U + L) \, \nu^{U^\perp}_k(\intd L) = \int_{\GOp(U, j + k)} f(L) \, \nu^{U}_{j + k}(\intd L).
        \end{align*}
      \end{Lemma}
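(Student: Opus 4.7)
The plan is to exhibit an $\SO(U)$-equivariant bijection between $\GOp(U^\perp,k)$ and $\GOp(U,j+k)$, and then to invoke uniqueness of the invariant probability measure on each of these homogeneous spaces. Concretely, I would define $\Phi:\GOp(U^\perp,k)\to \GOp(U,j+k)$ by $\Phi(L):=U+L$. Since $L\subset U^\perp$, the sum is an orthogonal direct sum of dimension $j+k$ containing $U$, so $\Phi$ indeed takes values in $\GOp(U,j+k)$. Its inverse is $M\mapsto M\cap U^\perp$: for $M\in\GOp(U,j+k)$, the subspace $M\cap U^\perp$ has dimension $k$, lies in $U^\perp$, and $U+(M\cap U^\perp)=M$.

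Next, I would exploit that $\SO(U)$, which coincides with $\SO(U^\perp)$ under the convention of the paper, acts naturally on both Grassmannians: any $\vartheta\in\SO(U)$ fixes $U^\perp$ setwise, hence permutes $\GOp(U^\perp,k)$, and fixes $U$ setwise, so for $M\in\GOp(U,j+k)$ one has $\vartheta M\supset \vartheta U=U$ and therefore $\vartheta M\in\GOp(U,j+k)$. A one-line check gives
\[
  \Phi(\vartheta L)=U+\vartheta L=\vartheta U+\vartheta L=\vartheta(U+L)=\vartheta\,\Phi(L),
\]
so $\Phi$ is $\SO(U)$-equivariant.

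To conclude, I would use that both Grassmannians are homogeneous $\SO(U)$-spaces and therefore carry a unique $\SO(U)$-invariant probability measure, namely $\nu^{U^\perp}_k$ on $\GOp(U^\perp,k)$ and $\nu^U_{j+k}$ on $\GOp(U,j+k)$. By the equivariance just established, the pushforward $\Phi_*\nu^{U^\perp}_k$ is an $\SO(U)$-invariant probability measure on $\GOp(U,j+k)$, so by uniqueness it must equal $\nu^U_{j+k}$. The claimed identity then follows from the change-of-variables formula applied to $\Phi$. Degenerate situations ($k=0$, $j=0$, or $j+k=n$) reduce to trivial equalities on one-point spaces and need no separate treatment. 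Since the whole argument is driven by uniqueness of invariant measures, I do not anticipate any real technical obstacle; the only point that deserves a moment's care is that $\Phi$ is well defined and bijective, which was dispatched above.
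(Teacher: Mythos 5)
Your proposal is correct and follows essentially the same route as the paper: both define the map $L\mapsto U+L$, observe that its pushforward of $\nu^{U^\perp}_k$ is an $\SO(U^\perp)$-invariant probability measure on $\GOp(U,j+k)$ (via the equivariance you verify), and conclude by uniqueness of the invariant Haar probability measure on that homogeneous space. The explicit inverse $M\mapsto M\cap U^\perp$ is a harmless extra that the paper does not need.
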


      \begin{proof}
			We consider the map  $H: \GOp(U^\perp, k)  \to \GOp(U, j + k)$, $ L \mapsto U + L$, which is well-defined, since
         $\dim (U \cap L) = 0$ and hence $\dim (L + U) = j + k$ for all $L \in \GOp({U^\perp}, k)$. It is sufficient to show
				that $H(\nu^{U^\perp}_k) = \nu^{U}_{j + k} $, where $H(\nu^{U^\perp}_k)$ is
				the image measure  of $\nu^{U^\perp}_k$ under $H$. Since $H(\nu^{U^\perp}_k)$ and $\nu^{U}_{j + k}$ are probability measures,
				and $\nu^{U}_{j + k}$ is $\SO(U^\perp)$ invariant by definition,
				it is sufficient to show that  $H(\nu^{U^\perp}_k)$ is $\SO(U^\perp)$ invariant.

        To verify this, choose $A \in \cB(\GOp(U, j + k))$ and $\vartheta \in \SO(U^\perp)$. Then we obtain
        \begin{align*}
          H(\nu^{U^\perp}_k) (\vartheta A) & = \nu^{U^\perp}_k \left(  \{ L \in \GOp({U^\perp}, k) : U+L \in \vartheta A \}  \right) \\
          & = \nu^{U^\perp}_k \big( \{ L \in \GOp({U^\perp}, k) :  {\vartheta^{-1} U}  + \vartheta^{-1} L \in A \} \big).
        \end{align*}
        The $\SO(U^\perp)$ invariance of $\nu^{U^\perp}_k$ yields
        $$
          H(\nu^{U^\perp}_k) (\vartheta A)  = \nu^{U^\perp}_k \big( \{ L \in \GOp({U^\perp}, k) : U + L \in A \} \big)
           = H(\nu^{U^\perp}_k) ( A ),
       $$
        which completes the argument.
      \end{proof}

     The following proposition, which is a generalization
		of Lemma \ref{Lem_IntForm_sineMeT} in the case $a=2$, will be applied at the
		end of Section \ref{subsec5.3}. Its proof uses several of the lemmas provided above.
			
      \begin{Proposition} \label{Prop_Main}
        Let $F \in \GOp(n, k)$ with $0\le j \leq k \le n$ and $m,l\in\N_0$. Then
        \begin{align*}
          & \int_{\GOp(n, n - k + j)} [F, L]^2 Q(L)^m Q(F \cap L)^l \, \nu_{n - k + j} (\intd L) \\
          & \qquad = \frac {(n - k + j)! k!} {n! j!} \frac {\Gamma(\frac {n} 2 + 1)
					\Gamma(\frac {j} 2 + l) \Gamma(\frac{k} 2)}{\Gamma(\frac {n} 2 + m + 1)
					\Gamma(\frac{j} 2) \Gamma(\frac {k - j} 2 ) \Gamma(\frac {n - k + j} 2 + 1)} \\
          & \qquad \qquad \times \sum_{i = 0}^{m} \binom{m}{i} \frac {(l + i - 2)!} {(l - 2)!}
					\frac {\Gamma(\frac {k - j} 2 + i) \Gamma(\frac {n - k + j} 2 + m - i + 1)}
					{\Gamma(\frac {k} 2 + l + i)}  Q^{m - i} Q(F)^{l + i}.
        \end{align*}
      \end{Proposition}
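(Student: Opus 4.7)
The plan is to apply Lemma \ref{Lem_TraFo_SW} to the integral with $f(L) = [F,L]^2 Q(L)^m Q(F\cap L)^l$. This transforms the integration over $\GOp(n,n-k+j)$ into an iterated integration over $\GOp(F,j)$ and, for each fixed $U\in\GOp(F,j)$, over $\GOp(U, n-k+j)$, at the cost of an additional weight $[F,L]^j$ that combines with the existing $[F,L]^2$ to produce $[F,L]^{j+2}$ in the inner integrand; the prefactor $d_{n,j,k}$ appears.

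Next, I would use Lemma \ref{Lem_Trafo_G(U,j)} to replace the inner integration over $\GOp(U,n-k+j)$ by an integration over $L'\in\GOp(U^\perp, n-k)$, writing $L=U+L'$. For $\nu^{U^\perp}_{n-k}$-almost every such $L'$, a dimension count in $U^\perp$ (with $(k-j)+(n-k) = n-j$) gives $F\cap L = U$, whence $Q(F\cap L)^l = Q(U)^l$. Setting $F' := F\cap U^\perp \in \GOp(U^\perp, k-j)$, the subspace determinant factors as $[F, U+L'] = [F', L']$ (extend an orthonormal basis of $U$ to one of $F$ by an ONB of $F'$ and to one of $U+L'$ by an ONB of $L'$; the two extensions are mutually orthogonal to $U$). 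Moreover $Q(L) = Q(U)+Q(L')$ since $U\perp L'$. Binomially expanding $Q(L)^m$ and pulling out the $U$-dependent factors, the inner integral becomes
\begin{align*}
\sum_{b=0}^m \binom{m}{b}\, Q(U)^{m-b+l}\int_{\GOp(U^\perp, n-k)} [F', L']^{j+2} Q(L')^b\, \nu^{U^\perp}_{n-k}(\intd L').
\end{align*}
I would then apply Lemma \ref{Lem_IntForm_sineMeT} in the ambient space $U^\perp$ of dimension $n-j$ with parameters $(k,r,a,i)=(n-k,\,k-j,\,j+2,\,b)$, noting that the hypothesis $k+r\ge n$ is satisfied with equality. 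The output is a $\beta$-sum of scalar Gamma quotients multiplied by $Q(U^\perp)^{b-\beta}Q(F')^\beta$.

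To carry out the outer integration over $\GOp(F,j)$, I would write $Q(U^\perp) = Q - Q(U)$ and $Q(F') = Q(F)-Q(U)$, expand both powers binomially, and apply Lemma \ref{Lem_IntForm_MeTc} inside the ambient $F$ (of dimension $k$, with $U$ of dimension $j$) to evaluate integrals of the form $\int_{\GOp(F,j)} Q(U)^{a}\, Q(F\cap U^\perp)^{c}\,\nu^F_j(\intd U)$. After collecting terms, the result is a linear combination of tensors $Q^{m-i}Q(F)^{l+i}$ with $i\in\{0,\dots,m\}$, whose coefficients are finite sums of Gamma function quotients indexed by the binomial expansion indices and by $\beta$.

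The main obstacle is the final combinatorial simplification: one has to show that, after collecting the coefficient of $Q^{m-i}Q(F)^{l+i}$, the resulting three-fold sum (over $b$, $\beta$, and the binomial indices introduced by $Q-Q(U)$ and $Q(F)-Q(U)$) collapses to the single closed-form factor $\frac{(l+i-2)!}{(l-2)!}\,\frac{\Gamma(\frac{k-j}{2}+i)\Gamma(\frac{n-k+j}{2}+m-i+1)}{\Gamma(\frac{k}{2}+l+i)}$ of the claim. I expect this to follow by iterated application of the Vandermonde-type identity \eqref{lemZeilmod} from the appendix, combined with Legendre's duplication formula, which should consolidate the scalar prefactor $d_{n,j,k}\cdot e_{n-j,n-k,k-j,j+2}$ together with the other Gamma ratios into the announced product $\tfrac{(n-k+j)!k!}{n!j!}\,\Gamma(\tfrac{n}{2}+1)\Gamma(\tfrac{j}{2}+l)\Gamma(\tfrac{k}{2})/[\Gamma(\tfrac{n}{2}+m+1)\Gamma(\tfrac{j}{2})\Gamma(\tfrac{k-j}{2})\Gamma(\tfrac{n-k+j}{2}+1)]$.
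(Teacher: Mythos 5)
Your plan follows the paper's own proof essentially step for step: Lemma \ref{Lem_TraFo_SW} to pass to $\GOp(F,j)$ and pick up $[F,L]^{j+2}$, the identification $F\cap L=U$, Lemma \ref{Lem_Trafo_G(U,j)}, Lemma \ref{Lem_IntForm_sineMeT} applied in $U^\perp$ with exactly the parameters you name, Lemma \ref{Lem_IntForm_MeTc} in $F$, and a final collapse of the iterated sums via \eqref{lemZeilmod} and Legendre's duplication formula, with $d_{n,j,k}\,e_{n-j,n-k,k-j,j+2}=\tfrac{(n-k+j)!k!}{n!j!}$. The only point the paper handles that you omit is the separate treatment of the degenerate cases $j=k$ and $j=0$ (the latter split according to $l=0$ or $l\ge 1$), which is needed because the chain of lemmas and the final summation identities require $0<j<k$.
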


      For $l \leq 1$, the factor $\frac {(l + i - 2)!} {(l - 2)!}$ in Proposition \ref{Prop_Main}
			is read as stated in \eqref{Form_Gam_Cont} and discussed in Section \ref{Tmr}. Moreover,
			$\Gamma(l+{j}/ 2 )/\Gamma({j}/{2})$ is zero if $j=0,l\neq0$ and one if $j=l=0$.

      \begin{proof}
        Let $I$ denote the integral in which we are interested. If $j = k$, all summands on  the right
				side of the asserted equation are zero except for $i=0$. Thus it is easy to confirm the assertion.
				Now assume that $0\le j<k\le n$, hence $n\ge 1$.
				If $j=l=0$, then the assertion follows as a special case of Lemma \ref{Lem_IntForm_sineMeT}.
				If $j=0,l\neq0$ then both sides of the asserted equation are zero.

				In the following, we consider the remaining cases where $0<j< k$. Then Lemma \ref{Lem_TraFo_SW} yields
        \begin{align*}
          I = d_{n, j, k} \int_{\GOp(F, j)} \int_{\GOp(U, n - k + j)} [F, L]^{j + 2} Q(L)^m Q(F \cap L)^l \, \nu^{U}_{n - k + j} (\intd L) \, \nu^F_{j} (\intd U).
        \end{align*}
        For fixed $U\in \GOp(F, j)$, we have
				  $\dim (F \cap L) = j = \dim U$ for $\nu^U_{n-k+j}$-almost all $L\in \GOp(U, n - k + j)$ and $U \subset F \cap L$, hence $U = F \cap L$, and therefore
        \begin{align*}
           I= d_{n, j, k} \int_{\GOp(F, j)} Q(U)^l \int_{\GOp(U, n - k + j)} [F, L]^{j + 2} Q(L)^m \, \nu^{U}_{n - k + j} (\intd L) \, \nu^F_{j} (\intd U).
        \end{align*}
        An application of Lemma \ref{Lem_Trafo_G(U,j)} shows that
        \begin{align*}
           I= d_{n, j, k} \int_{\GOp(F, j)} Q(U)^l \int_{\GOp(U^\perp, n - k)}
					[F, U + L]^{j + 2} Q(U + L)^m \, \nu^{U^\perp}_{n - k} (\intd L) \, \nu^F_{j} (\intd U).
        \end{align*}
        As $U \subset F$ and $L \subset U^\perp$, we have
        \begin{align*}
          [F, U + L] = [F \cap U^\perp, L]^{(U^\perp)}
        \end{align*}
        and
        \begin{align*}
          Q(U + L)^m = \big( Q(U) + Q(L) \big)^m = \sum_{\alpha = 0}^{m} \binom{m}{\alpha} Q(L)^\alpha Q(U)^{m - \alpha}.
        \end{align*}
        Thus we obtain
        \begin{align*}
          I & = d_{n, j, k} \sum_{\alpha = 0}^{m} \binom{m}{\alpha} \int_{\GOp(F, j)} Q(U)^{l + m - \alpha} \\
          & \qquad \times \int_{\GOp(U^\perp, n - k)} \left( [F \cap U^\perp, L]^{(U^\perp)} \right)^{j + 2} Q(L)^\alpha \, \nu^{U^\perp}_{n - k} (\intd L) \, \nu^F_{j} (\intd U).
        \end{align*}
				Observe that $\text{dim}(U^\perp)=n-j>n-k\ge 0$, hence $\text{dim}(U^\perp)\ge 1$.
       Therefore Lemma \ref{Lem_IntForm_sineMeT} can be used to see that the integral with respect to $L$ can be expressed as
        \begin{align*}
          &e_{n - j, n - k, k - j, j + 2} \frac {\Gamma(\frac {n} 2 + 1)}{\Gamma(\frac {n - k + j} 2 + 1) \Gamma(\frac {n} 2 + 1 + \alpha)} \sum_{\beta = 0}^{\alpha} (-1)^{\beta} \binom{\alpha}{\beta} \\
          & \qquad \times \frac{\Gamma(\frac{n - k + j} 2 + 1 + \alpha - \beta) \Gamma(\frac{k - j} 2 + \beta) \Gamma(\frac{j} 2 + 2) \Gamma(\frac{k - j} 2)} {\Gamma(\frac{k - j} 2) \Gamma(\frac{j} 2 + 2 - \beta) \Gamma(\frac{k - j} 2 + \beta)} Q(U^\perp)^{\alpha - \beta} Q(F \cap U^\perp)^{\beta},
        \end{align*}
        and thus
        \begin{align*}
          I & = d_{n, j, k} e_{n - j, n - k, k - j, j + 2} \frac {\Gamma(\frac {n} 2 + 1)}{\Gamma(\frac {n - k + j} 2 + 1)} \\
          & \qquad \times \sum_{\alpha = 0}^{m} \sum_{\beta = 0}^{\alpha} (-1)^{\beta} \binom{m}{\alpha} \binom{\alpha}{\beta} \frac{\Gamma(\frac{n - k + j} 2 + 1 + \alpha - \beta) \Gamma(\frac{j} 2 + 2)} {\Gamma(\frac {n} 2 + 1 + \alpha) \Gamma(\frac{j} 2 + 2 - \beta)} \\
          & \qquad \times \int_{\GOp(F, j)} Q(U)^{l + m - \alpha} Q(U^\perp)^{\alpha - \beta}
					Q(F \cap U^\perp)^{\beta} \, \nu^F_{j} (\intd U).
        \end{align*}
        Observing cancellations and using Legendre's duplication formula, we get
		$$
				d_{n, j, k} e_{n - j, n - k, k - j, j + 2}=\frac {(n - k + j)! k!} {n! j!}.
				$$
Expanding $Q(U^{\perp})^{\alpha - \beta} = (Q - Q(U))^{\alpha - \beta}$, we obtain
        \begin{align*}
          I & = \frac {(n - k + j)! k!} {n! j!} \frac {\Gamma(\frac {n} 2 + 1) \Gamma(\frac{j} 2 + 2)}{\Gamma(\frac {n - k + j} 2 + 1)} \sum_{\alpha = 0}^{m} \sum_{\beta = 0}^{\alpha} \sum_{i = 0}^{\alpha - \beta} (-1)^{\alpha + i} \binom{m}{\alpha} \binom{\alpha}{\beta} \binom{\alpha - \beta}{i} \\
          & \qquad \times \frac{\Gamma(\frac{n - k + j} 2 + 1 + \alpha - \beta)} {\Gamma(\frac {n} 2 + 1 + \alpha) \Gamma(\frac{j} 2 + 2 - \beta)} Q^{i} \int_{\GOp(F, j)} Q(U)^{l + m - \beta - i} Q(F \cap U^\perp)^{\beta} \, \nu^F_{j} (\intd U).
        \end{align*}
        Lemma \ref{Lem_IntForm_MeTc}, applied in $F$, yields
        \begin{align*}
          I & = \frac {(n - k + j)! k!} {n! j!} \frac {\Gamma(\frac {n} 2 + 1) \Gamma(\frac{k}{2}) \Gamma(\frac{j} 2 + 2)}{\Gamma(\frac {n - k + j} 2 + 1) \Gamma(\frac{j}{2}) \Gamma(\frac{k - j}{2})} \sum_{\alpha = 0}^{m} \sum_{\beta = 0}^{\alpha} \sum_{i = 0}^{\alpha - \beta} (-1)^{\alpha + i} {\binom{m}{\alpha} \binom{\alpha}{\beta} \binom{\alpha - \beta}{i}} \\
          & \qquad \times \frac{\Gamma(\frac{n - k + j} 2 + 1 + \alpha - \beta)} {\Gamma(\frac {n} 2 + 1 + \alpha)} \frac{\Gamma(\frac{j}{2} + l + m - \beta - i) \Gamma(\frac{k - j}{2} + \beta)}{\Gamma(\frac{k}{2} + l + m - i) \Gamma(\frac{j} 2 + 2 - \beta)} Q^{i} Q(F)^{l + m - i}.
        \end{align*}
        Using the relation
				$$\binom{m}{\alpha} \binom{\alpha}{\beta} \binom{\alpha - \beta}{i}
				 = \binom{m}{i} \binom{m - i}{\beta} \binom{m - i - \beta}{\alpha - i - \beta}
				$$
				and by a change of the order of summation, we conclude that
        \begin{align*}
          I & = \frac {(n - k + j)! k!} {n! j!} \frac {\Gamma(\frac {n} 2 + 1) \Gamma(\frac{k}{2}) \Gamma(\frac{j} 2 + 2)}{\Gamma(\frac {n - k + j} 2 + 1) \Gamma(\frac{j}{2}) \Gamma(\frac{k - j}{2})} \sum_{i = 0}^{m} \binom{m}{i} Q^{i} Q(F)^{l + m - i} \\
          & \qquad \times \frac{1}{\Gamma(\frac{k}{2} + l + m - i)} \sum_{\beta = 0}^{m - i} \binom{m - i}{\beta} \frac{\Gamma(\frac{j}{2} + l + m - \beta - i) \Gamma(\frac{k - j}{2} + \beta)}{ \Gamma(\frac{j} 2 + 2 - \beta)} \\
          & \qquad \times \sum_{\alpha = i + \beta}^{m} (-1)^{\alpha + i} \binom{m - i - \beta}{\alpha - i - \beta} \frac{\Gamma(\frac{n - k + j} 2 + 1 + \alpha - \beta)} {\Gamma(\frac {n} 2 + 1 + \alpha)} .
        \end{align*}
        For the sum with respect to $\alpha$ we obtain from   relation \eqref{lemZeilmod} that
        \begin{align*}
           &  \sum_{\alpha = 0}^{m - i - \beta} (-1)^{\alpha + \beta} \binom{m - i - \beta}{\alpha} \frac{\Gamma(\frac{n - k + j} 2 + i + 1 + \alpha)} {\Gamma(\frac {n} 2 + i + \beta + 1 + \alpha)} \\
          & \qquad = (-1)^{\beta} \frac{\Gamma(\frac{n - k + j} 2 + i + 1) \Gamma(\frac {k - j} 2 + m - i)} {\Gamma(\frac {n} 2 + m + 1) \Gamma(\frac {k - j} 2 + \beta)}.
        \end{align*}
        Next, for  the resulting sum with respect to $\beta$, we obtain again from  relation \eqref{lemZeilmod} that
        \begin{align*}
           &  \sum_{\beta = 0}^{m - i} (-1)^{m + i + \beta} \binom{m - i}{\beta} \frac{\Gamma(\frac{j}{2} + l + \beta)}{\Gamma(\frac{j} 2 + 2 - m + i + \beta)} \\
          & \qquad = (-1)^{m + i} \frac{\Gamma(\frac{j}{2} + l) \Gamma(2 - l)}{\Gamma(\frac{j} 2 + 2) \Gamma(2 - l - m + i)} \\
          & \qquad = \frac{\Gamma(\frac{j}{2} + l) \Gamma(l + m - i - 1)}{\Gamma(\frac{j} 2 + 2) \Gamma(l - 1)},
        \end{align*}
				where we used $j>0$ for the first equation and \eqref{Form_Gam_Cont} for the second (and distinguished the cases $l=0$, $l=1$, $l\ge 2$).
       Thus we get
        \begin{align*}
          I & = \frac {(n - k + j)! k!} {n! j!} \frac {\Gamma(\frac {n} 2 + 1) \Gamma(\frac{j}{2} + l) \Gamma(\frac{k}{2})}{\Gamma(\frac {n} 2 + m + 1) \Gamma(\frac{j}{2}) \Gamma(\frac{k - j}{2}) \Gamma(\frac {n - k + j} 2 + 1)} \\
          & \qquad \times \sum_{i = 0}^{m} \binom{m}{i} \frac{(l + i - 2)!}{(l - 2)!} \frac{\Gamma(\frac {k - j} 2 + i) \Gamma(\frac{n - k + j} 2 + m - i + 1)} {\Gamma(\frac{k}{2} + l + i)} Q^{m - i} Q(F)^{l + i},
        \end{align*}
        where we reversed the order of summation.
      \end{proof}

    \section{Proof of  Theorem \ref{Thm_Main}}\label{secProof}
		
		We divide the proof into several steps. First, we treat the translative part of the kinematic integral. This can be done similarly
		as in the proof of the translative integral formula for curvature measures. Then we consider two ``boundary cases'' separately.
		The main and third step requires  the explicit calculation of a rotational integral for a tensor-valued function.
		Once this is accomplished,
		the proof is  finished, except for the asserted description of the coefficients,
		which at this point are still given in terms of iterated sums of products of
		binomial coefficients and Gamma functions. In a final step, these coefficients are then shown to have the simple form
		provided in the statement of the theorem.
		
		\subsection{The translative part}\label{subsec5.1}

The case $j=n$ is easy to check directly (since then $s=0$). Hence we may assume that $j\le n-1$ in the following.
   Let $I_{1}$ denote the integral in which we are interested. We start by decomposing the measure $\mu$ and by substituting
		the definition of $\TenCM{j}{r}{s}{l}$ for polytopes to get
    \begin{align*}
      I_1 & = \int_{\GOp_n} \TenCM{j}{r}{s}{l} (P \cap g P', \beta \cap g \beta') \, \mu( \intd g) \\
      & = \int_{\SO(n)} \int_{\R^n} \TenCM{j}{r}{s}{l} (P \cap (\vartheta P' + t), \beta \cap (\vartheta \beta' + t)) \, \cH^n(\intd t) \, \nu( \intd \vartheta) \\
      & = c_{n, j}^{r, s, l}\frac{1}{\omega_{n - j}}  \int_{\SO(n)} \int_{\R^n} \sum_{G \in \cF_j(P \cap (\vartheta P' + t))} Q(G)^l \int_{G \cap \beta \cap (\vartheta \beta' + t)} x^r \, \cH^j(\intd x) \\
      & \qquad \qquad \qquad \qquad \qquad \times \int_{N(P \cap (\vartheta P' + t), G) \cap \S} u^s \, \cH^{n - j - 1} (\intd u) \, \cH^n(\intd t) \, \nu( \intd \vartheta).
    \end{align*}
    Let $\vartheta \in \SO(n)$ be fixed for the moment. Neglecting a set of translations $t\in\R^n$ of measure zero, we
		can assume that the following is true (see \cite[p.~241]{Schneider14}).
		For every $j$-face $G \in \cF_j(P \cap (\vartheta P' + t))$
		there are a unique $k \in \{j, \ldots,  n\}$, a unique  $F \in \cF_k(P)$ and a unique
		 $F' \in \cF_{n - k + j}( P')$ such that $G = F \cap (\vartheta F'+t)$.
		Conversely, for every $k \in \{j, \ldots,  n\}$, every $F \in \cF_k(P)$ and every
		  $F' \in \cF_{n - k + j}(  P'  )$, we have $F \cap (\vartheta F'+t)\in \cF_j(P \cap (\vartheta P' + t))$ for almost all
			$t\in\R^n$ such that $F \cap (\vartheta F'+t)\neq \emptyset$.
		 Hence, we get
    \begin{align}
      I_1 & = c_{n, j}^{r, s, l}\frac{1}{\omega_{n - j}}  \int_{\SO(n)} \sum_{k = j}^n \sum_{F \in \cF_k(P)} \sum_{F' \in \cF_{n - k + j}(P')} Q(F^0 \cap (\vartheta F')^0)^l \nonumber \\
      & \qquad \qquad \qquad \times \int_{N(P \cap (\vartheta P' + t), F \cap (\vartheta F' + t)) \cap \S} u^s \, \cH^{n - j - 1} (\intd u) \nonumber \\
      & \qquad \qquad \qquad \times \int_{\R^n} \int_{F \cap (\vartheta F' + t) \cap \beta \cap (\vartheta \beta' + t)} x^r \, \cH^j(\intd x)\, \cH^n(\intd t) \, \nu( \intd \vartheta), \label{form_I1_mit_I2}
    \end{align}
    where we use that the integral with respect to $u$ is independent of the choice of a vector $t \in \R^n$ such that $\relint F \cap \relint (\vartheta F' + t) \neq \emptyset$.

    As a next step, we calculate the integral with respect to $t$, which we denote by $I_2$.
		The argument is essentially the same as in  \cite[p.~241-2]{Schneider14}. We include it for the the sake
		of completeness.
		For this, we can again assume that $F$ and $\vartheta F'$ are in general position, that is, $[F,\vartheta F']\neq 0$. We set $\alpha:= F \cap \beta$ and
		$\alpha' := \vartheta F' \cap \vartheta \beta'$. We can assume that $\alpha\neq \emptyset$ and $\alpha'\neq\emptyset$, since otherwise both sides of the equation to be derived are zero. Let $s_0,t_0\in\R^n$ be such  that $s_0\in \alpha\cap(\alpha'+t_0)\neq \emptyset$.
		Then we define $L_1 := F^0 \cap (\vartheta F')^0$, $L_2 := F^0 \cap L_1^\perp$, $L_3 := (\vartheta F')^0 \cap L_1^\perp$. Hence,
		for every $t\in\R^n$, there are uniquely determined vectors $t_i \in L_i$, $i = 1,2,3$, such that $t = t_0+t_1 + t_2 + t_3$. The transformation
		formula for integrals then yields that
    \begin{align*}
      I_2 & = [F, \vartheta F'] \int_{L_3} \int_{L_2} \int_{L_1} \int_{\alpha \cap (\alpha' + t_0+t_1 + t_2 + t_3)} x^r \, \cH^j(\intd x) \, \cH^j(\intd t_1) \, \cH^{k - j}(\intd t_2) \, \cH^{n - k}(\intd t_3).
    \end{align*}
    Since
		$$
		(\alpha-s_0-t_2)\cap(\alpha'+t_0-s_0+t_1+t_3)\subset F^0\cap (\vartheta F')^0=L_1,
		$$
		we have
		$\alpha \cap ( \alpha' + t_0+t_1 + t_2 + t_3) \subset s_0+L_1 + t_2$, and hence, from Fubini's theorem, we obtain for the two inner integrals
    \begin{align*}
      &\int_{L_1} \int_{\alpha \cap (\alpha' + t_0+t_1 + t_2 + t_3)\cap (s_0+L_1 + t_2) } x^r \, \cH^j(\intd x) \, \cH^j(\intd t_1) \\
			& \qquad= \int_{L_1} \int_{L_1 \cap (\alpha - t_2-s_0)\cap (\alpha' + t_0-s_0+ t_1 + t_3)} \left( x +s_0+ t_2 \right)^r \, \cH^j(\intd x) \, \cH^j(\intd t_1) \\
			&\qquad=\int \1\{x\in (\alpha - t_2-s_0)\cap L_1\}\left( x +s_0+ t_2 \right)^r\\
			&\qquad\qquad\qquad \times \int \1\{t_1\in L_1,x\in \alpha' + t_0-s_0+ t_1 + t_3\} \, \cH^j(\intd t_1)\,
			\cH^j(\intd x)\\
			&\qquad=\int \1\{x\in (\alpha - t_2-s_0)\cap L_1\} \left( x +s_0+ t_2 \right)^r \\
			&\qquad\qquad\qquad \times\cH^{j}([(\alpha'+t_0-s_0)+t_3]\cap L_1)\, \cH^j(\intd x)\\
			&\qquad=\cH^{j}([(\alpha'+t_0-s_0)+t_3]\cap L_1)\int_{(\alpha-s_0-t_2)\cap L_1}\left( x +s_0+ t_2 \right)^r\, \cH^j(\intd x)\\
			&\qquad=\cH^{j}((\alpha'+t_0-s_0)\cap (L_1+t_3))\int_{(\alpha-s_0 )\cap (L_1+t_2)}\left( x +s_0\right)^r\, \cH^j(\intd x),\\
    \end{align*}
    which yields
    \begin{align*}
      I_2 & = [F, \vartheta F'] \int_{L_3} \cH^j \left( (\alpha'+ t_0-s_0)\cap (L_1+t_3) \right) \, \cH^{n - k}(\intd t_3) \\
			&\qquad\qquad\qquad\times \int_{L_2} \int_{(\alpha-s_0)\cap (L_1 + t_2)} (x+s_0)^r \, \cH^j(\intd x) \, \cH^{k - j}(\intd t_2)\\
			&=[F, \vartheta F']\cH^{n-k+j}(\alpha'+t_0-s_0)\int_{\alpha-s_0}(x+s_0)^r\, \cH^k(\intd x)\\
			&=[F, \vartheta F']\cH^{n-k+j}(F' \cap \beta' )\int_{F\cap \beta} x^r\, \cH^k(\intd x).
    \end{align*}
   Thus, (\ref{form_I1_mit_I2}) can be rewritten as
    \begin{align*}
      I_1 & = c_{n, j}^{r, s, l}\frac{1}{\omega_{n - j}}  \sum_{k = j}^n \sum_{F \in \cF_k(P)} \sum_{F' \in \cF_{n - k + j}(P')} \cH^{n - k + j}(F' \cap \beta') \int _{F \cap \beta} x^r \, \cH^k(\intd x) \\
      & \qquad \qquad \times \int_{\SO(n)} [F, \vartheta F'] Q \left( F^0 \cap (\vartheta F')^0 \right)^l \\
      & \qquad \qquad \times \int _{N(P \cap (\vartheta P' + t), F \cap (\vartheta F' + t)) \cap \S} u^s \, \cH^{n - j - 1} (\intd u) \, \nu( \intd \vartheta),
    \end{align*}
		where $t\in\R^n$ is chosen as described after \eqref{form_I1_mit_I2}.

\subsection{The cases $k\in\{j,n\}$}\label{subsec5.2}

  We have to consider the cases $k = j$ and $k = n$ separately, since the application of some of the auxiliary results
	requires that $k-j\ge 1$ and $k\le n-1$. Starting with $k = j$, we get
    \begin{align*}
      & c_{n, j}^{r, s, l}\frac{1}{\omega_{n - j}}  \sum_{F \in \cF_j(P)} \sum_{F' \in \cF_n(P')} \cH^n (F' \cap \beta') \int _{F \cap \beta} x^r \, \cH^j (\intd x) \int_{\SO(n)} [F, \vartheta F'] \\
      & \qquad \qquad \times Q \left( F^0 \cap (\vartheta F')^0 \right)^l \int _{N(P \cap (\vartheta P' + t), F \cap (\vartheta F' + t)) \cap \S} u^s \, \cH^{n - j - 1} (\intd u) \, \nu( \intd \vartheta) \\
      & \qquad = c_{n, j}^{r, s, l}\frac{1}{\omega_{n - j}}  \sum_{F \in \cF_j(P)} \underbrace{\cH^n (P' \cap \beta')}_{ = \CM{n}(P', \beta')} \int _{F \cap \beta} x^r \, \cH^j (\intd x) \int_{\SO(n)} \underbrace{[F, \vartheta P']}_{ = [F, \R^n] = 1} \\
      & \qquad \qquad \times Q \big( F^0 \cap \underbrace{(\vartheta P')^0}_{ = \R^n} \big)^l \int _{\underbrace{\scriptstyle N(P \cap (\vartheta P' + t), F \cap (\vartheta P' + t))}_{ = N(P, F)} \cap \S} u^s \, \cH^{n - j - 1} (\intd u) \, \nu( \intd \vartheta) \\
      &\qquad = \TenCM{j}{r}{s}{l} (P, \beta) \CM{n}(P', \beta').
    \end{align*}

    For $k = n$, we conclude from Fubini's theorem
    \begin{align*}
       &  c_{n, j}^{r, s, l}\frac{1}{\omega_{n - j}}  \sum_{F \in \cF_n(P)} \sum_{F' \in \cF_{j}(P')} \cH^{j}(F' \cap \beta') \int _{F \cap \beta} x^r \, \cH^n(\intd x) \int_{\SO(n)} \underbrace{[F, \vartheta F']}_{ = [\R^n, \vartheta F'] = 1} \\
      & \qquad\qquad \times Q \big( \underbrace{F^0}_{ = \R^n} \cap (\vartheta F')^0 \big)^l \int _{\underbrace{\scriptstyle N(P \cap (\vartheta P' + t), P \cap (\vartheta F' + t))}_{ = \vartheta N(P', F')} \cap \S} u^s \, \cH^{n - j - 1} (\intd u) \, \nu( \intd \vartheta) \\
      & \qquad = c_{n, j}^{r, s, l}\frac{1}{\omega_{n - j}}  \int _{P \cap \beta} x^r \, \cH^n(\intd x) \sum_{F' \in \cF_{j}(P')} \cH^{j}(F' \cap \beta') \\
      & \qquad\qquad \times \int _{N(P', F') \cap \S} \int_{\SO(n)} Q \left( \vartheta F' \right)^l (\vartheta u)^s \, \nu( \intd \vartheta) \, \cH^{n - j - 1} (\intd u).
    \end{align*}
    For this, we obtain from Lemma \ref{Lem_IntForm_MeTVec}
    \begin{align*}
       &  \1 \{ s \text{ even} \}  c_{n, j}^{r, s, l} {\frac{1}{\omega_{n - j}} \frac{\Gamma(\frac {n} {2}) \Gamma(\frac {j} {2} + l) \Gamma(\frac {s + 1} {2})}{\sqrt \pi \Gamma(\frac {n + s} {2} + l) \Gamma(\frac {j} {2})}}  Q^{l + \frac s 2} \int _{P \cap \beta} x^r \, \cH^n(\intd x) \\
      & \qquad \qquad \qquad \times \sum_{F' \in \cF_{j}(P')} \cH^{j}(F' \cap \beta') \int _{N(P', F') \cap \S} \cH^{n - j - 1} (\intd u) \\
      & \qquad = c_{n, j}^{s} \, \TenCM{n}{r}{0}{\frac s 2 + l} (P, \beta) \CM{j}(P', \beta'),
    \end{align*}
    where
    \begin{equation}\label{Plus}
      c_{n, j}^{s}  : = \1 \{ s \text{ even} \} \frac{2 \omega_{n - j}}{s! \omega_{s + 1} \omega_{n - j + s}} = \1 \{ s \text{ even} \} \frac{1}{(2\pi)^{s} \left(\frac{s}{2}\right)!} \frac{\Gamma(\frac{n - j + s}{2})}{\Gamma(\frac{n - j}{2})}.
    \end{equation}

    Hence, we get
    \begin{align*}
      I_1 & = c_{n, j}^{r, s, l}\frac{1}{\omega_{n - j}}  \sum_{k = j + 1}^{n - 1} \sum_{F \in \cF_k(P)} \sum_{F' \in \cF_{n - k + j}(P')} \cH^{n - k + j}(F' \cap \beta') \\
      & \qquad \qquad \qquad \times \int _{F \cap \beta} x^r \, \cH^k(\intd x) \int_{\SO(n)} [F, \vartheta F'] Q\left( F^0 \cap (\vartheta F')^0 \right)^l \\
      & \qquad \qquad \qquad \times \int _{N(P \cap (\vartheta P' + t), F \cap (\vartheta F' + t)) \cap \S} u^s \, \cH^{n - j - 1} (\intd u) \, \nu( \intd \vartheta) \\
      &	\qquad + \TenCM{j}{r}{s}{l} (P, \beta) \CM{n}(P', \beta') + c_{n, j}^{s} \, \TenCM{n}{r}{0}{\frac s 2 + l}(P, \beta) \CM{j}(P', \beta').
    \end{align*}
    Furthermore, for any $t \in \R^n$ such that $\relint F \cap \relint (\vartheta F' + t) \neq \emptyset$ we obtain from  \cite[Theorem 2.2.1]{Schneider14} that
    \begin{align*}
      N\left( P \cap (\vartheta P' + t), F \cap (\vartheta F' + t) \right) & = N(P, F) + \vartheta N(P', F'),
    \end{align*}
    and thus
    \begin{align}
      I_1 & = c_{n, j}^{r, s, l}\frac{1}{\omega_{n - j}}  \sum_{k = j + 1}^{n - 1} \sum_{F \in \cF_k(P)} \sum_{F' \in \cF_{n - k + j}(P')} \cH^{n - k + j}(F' \cap \beta') \nonumber \\
      & \qquad \qquad \qquad \times \int _{F \cap \beta} x^r \, \cH^k(\intd x) \int_{\SO(n)} [F, \vartheta F'] Q\left( F^0 \cap (\vartheta F')^0 \right)^l \nonumber \\
      & \qquad \qquad \qquad \times \int _{(N(P, F) + \vartheta N(P', F')) \cap \S} u^s \, \cH^{n - j - 1} (\intd u) \, \nu( \intd \vartheta) \nonumber \\
      &	\qquad + \TenCM{j}{r}{s}{l} (P, \beta) \CM{n}(P', \beta') + c_{n, j}^{s} \, \TenCM{n}{r}{0}{\frac s 2 + l} (P, \beta) \CM{j}(P', \beta') . \label{form_I1_mit_J}
    \end{align}
	 In the following, we denote by $C(\omega) := \{ \lambda x \in \R^n : x \in \omega, \lambda > 0\}$
	the cone spanned by a set  $\omega\subset\mathbb{S}^{n-1}$. Moreover, if $F$ is a face of $P$, we write
	$F^\perp$ for the linear subspace orthogonal to $F^0$.
    For the next and main step, we may assume that $j\le n-2$ (since $j<k\le n-1$). We define the mapping
    \begin{align*}
      J: \cB(F^\perp \cap \S) \times \cB(F'^\perp \cap \S) \rightarrow \T^{2l + s}
    \end{align*}
    by
    \begin{align*}
      J(\omega, \omega')& := \int_{\SO(n)} [F, \vartheta F'] Q\left( F^0 \cap (\vartheta F')^0 \right)^l \\
      & \qquad \qquad \times \int _{(C(\omega) + \vartheta C(\omega')) \cap \S} u^s
			\, \cH^{n - j - 1} (\intd u) \, \nu( \intd \vartheta)
    \end{align*}
    for $\omega \in \cB(F^\perp \cap \S)$ and $\omega' \in \cB(F'^\perp \cap \S)$.
		Then $J$ is a finite signed measure on $\cB(F^\perp \cap \S)$ in the first variable and
		a finite signed measure on $\cB(F'^\perp \cap \S)$ in the second variable, but this will not be needed in the following.
		In fact, we could specialize to the case $\omega=N(P,F)\cap \S$ and  $\omega'=N(P',F')\cap \S$ throughout the proof.
		
		Since  $[F, \vartheta F'] Q\left( F^0 \cap (\vartheta F')^0 \right)^l$ depends only on the linear subspaces
		$F^0$ and $(\vartheta F')^0$, we can assume that $F\in \GOp(n,k)$ and $F'\in\GOp(n,n-k+j)$ for determining $J(\omega,\omega')$.   Moreover, for $\nu$-almost all $\vartheta\in \SO(n)$, the linear subspaces $F^\perp$
		and $\vartheta (F'^\perp)$ are linearly independent. This will be tacitly used in the following.

    \subsection{The rotational part}\label{subsec5.3}
In this section, $\omega,\omega'$ are fixed and as described above.
    Due to the right invariance of $\nu$ and Fubini's theorem, we obtain for $\rho \in \SO(F'^{\perp})$
    \begin{align*}
      J(\omega, \omega') & = \int_{\SO(n)} [F, {\vartheta \rho F'}] Q(F \cap {\vartheta \rho F'})^l \int _{(C(\omega) + \vartheta \rho C(\omega')) \cap \S} u^s \, \cH^{n - j - 1} (\intd u) \, \nu( \intd \vartheta) \\
      & = \int_{\SO(n)} [F, \vartheta F'] Q(F \cap \vartheta F')^l \\
      & \qquad \qquad \times \int _{\SO(F'^\perp)} \int _{(C(\omega) + \vartheta \rho C(\omega')) \cap \S} u^s \, \cH^{n - j - 1} (\intd u) \, \nu^{F'^\perp}(\intd \rho) \, \nu( \intd \vartheta).
    \end{align*}
    Next, we introduce a multiple $J_1$ of the inner integral of $J(\omega,\omega')$ and rewrite it by means of a polar coordinate transformation, that is,
    \begin{align*}
      J_1 : = & \tfrac 1 2 \Gamma({\textstyle\frac {n - j + s} 2}) \int _{(C(\omega) + \vartheta \rho C(\omega')) \cap \S} u^s \, \cH^{n - j - 1}(\intd u)\\
      = & \int _0^\infty \int _{(C(\omega) + \vartheta \rho C(\omega')) \cap \S} (ru)^s e^{-\| ru\|^2} r^{n - j - 1} \, \cH^{n - j - 1}(\intd u) \, \intd r \\
      = & \int _{C(\omega) + \vartheta \rho C(\omega')} x^s e^{-\| x\|^2} \, \cH^{n - j}(\intd x) .
    \end{align*}
    The bijective transformation (here we assume that $\vartheta \in \SO(n)$ is such that  $F^\perp$
		and $\vartheta (F'^\perp)$ are linearly independent subspaces)
    \begin{align*}
      T: \omega \times \omega' \times (0, \infty)^2 \rightarrow C(\omega) + \vartheta \rho C(\omega'), \quad (u, v, t_1, t_2) \mapsto t_1 u + t_2 \vartheta \rho v,
    \end{align*}
    has the Jacobian
    \begin{align*}
      \cJ T(u,v,t_1,t_2) = t_1^{n - k - 1} t_2^{k - j - 1} [ F^\perp , \vartheta F'^\perp] = t_1^{n - k - 1} t_2^{k - j - 1} [ F , \vartheta F'].
    \end{align*}
    Hence, we obtain
    \begin{align*}
      J_1 & = \int _{\omega} \int _{\omega'} \int _{(0,\infty)^2} t_1^{n - k - 1} t_2^{k - j - 1} [ F , \vartheta F'] \left( t_1 u + t_2 \vartheta\rho v \right)^s e^{-\| t_1 u + t_2 \vartheta \rho v \|^2} \\ & \qquad \qquad \qquad \qquad \times \cH^{2} ( \intd (t_1,t_2)) \, \cH^{k - j - 1}(\intd v) \, \cH^{n - k - 1}(\intd u).
    \end{align*}
    Applying a polar coordinate transformation to the inner integral and then using binomial expansion, we get
    \begin{align*}
      J_1 & = [ F , \vartheta F'] \int _{\omega} \int _{\omega'} \int _0^{\frac \pi 2}
			\int _0^\infty (r \cos(\alpha))^{n - k - 1} (r \sin(\alpha))^{k - j - 1} e^{-\|
			r \cos(\alpha) u + r \sin(\alpha) \vartheta \rho v \|^2} \\
      & \qquad \times \left( r \cos(\alpha) u + r \sin(\alpha) \vartheta \rho v
			\right)^s r \, \intd r \, \intd \alpha \, \cH^{k - j - 1}(\intd v) \, \cH^{n - k - 1}(\intd u) \\
      & = [ F , \vartheta F'] \int _{\omega} \int _{\omega'} \int _0^{\frac \pi 2}
			\int _0^\infty r^{n - j + s - 1} e^{-r^2 (1 +  { \scriptstyle 2
			\sin(\alpha)\cos(\alpha)}  \langle u, \vartheta \rho v \rangle)}
			\sum_{i = 0}^{s} \binom{s}{i} \\
      & \qquad \times \cos(\alpha)^{n - k + s - i - 1} \sin(\alpha)^{k - j + i - 1} u^{s - i}
			(\vartheta \rho v)^{i} \, \intd r \, \intd \alpha \, \cH^{k - j - 1}(\intd v) \, \cH^{n - k - 1}(\intd u).
    \end{align*}
    Since $\sin(2\alpha)=2\sin(\alpha)\cos(\alpha)$ and
    \begin{align*}
      & \int_{\SO(n)} \int _{\SO(F'^\perp)} \int _{\omega} \int _{\omega'} \int _0^{\frac \pi 2} \int _0^\infty \bigg \| [F, \vartheta F']^2 Q(F \cap \vartheta F')^l r^{n - j + s - 1} \\
      & \qquad\qquad \times e^{-r^2 (1 + \sin(2 \alpha) \langle u, \vartheta \rho v \rangle)} \sum_{i = 0}^{s} \binom{s}{i} u^{s - i} (\vartheta \rho v)^{i} \cos(\alpha)^{n - k + s - i - 1} \sin(\alpha)^{k - j + i - 1}\bigg \| \\
      & \qquad\qquad \times \, \intd r \, \intd \alpha \, \cH^{k - j - 1}(\intd v) \, \cH^{n - k - 1}(\intd u) \, \nu^{F'^\perp}(\intd \rho) \, \nu( \intd \vartheta) \\
      &\qquad \leq   \int_{\SO(n)} \int _{\SO(F'^\perp)} \int _{\omega} \int _{\omega'} \int _{(0,\infty)^2} t_1^{n - k - 1} t_2^{k - j - 1} [ F , \vartheta F'] \big\| t_1 u + t_2 \vartheta\rho v \big\| ^s e^{-\| t_1 u + t_2 \vartheta \rho v \|^2} \\
      & \qquad\qquad \times \cH^{2} ( \intd (t_1,t_2)) \, \cH^{k - j - 1}(\intd v) \, \cH^{n - k - 1}(\intd u) \, \nu^{F'^\perp}(\intd \rho) \, \nu( \intd \vartheta) \\
      &\qquad = \frac{2}{\Gamma(\frac {n - j + s} 2)} \int_{\SO(n)} \int _{\SO(F'^\perp)} \int _{(C(\omega) + \vartheta \rho C(\omega')) \cap \S} \|u \|^s \, \cH^{n - j - 1} (\intd u) \, \nu^{F'^\perp}(\intd \rho) \, \nu( \intd \vartheta)
    \end{align*}
 is finite,   Fubini's theorem can be applied and yields
    \begin{align*}
      J(\omega, \omega') & = \frac{2}{\Gamma(\frac {n - j + s} 2)} \int _{\omega} \int_{\SO(n)} [F, \vartheta F']^2 Q(F \cap \vartheta F')^l \int _0^{\frac \pi 2} \int _{\omega'} \int _{\SO(F'^\perp)} \sum_{i = 0}^{s} \binom{s}{i} u^{s - i} (\vartheta \rho v)^{i} \\
      & \qquad \times \cos(\alpha)^{n - k + s - i - 1} \sin(\alpha)^{k - j + i - 1} \int _0^\infty r^{n - j + s - 1} e^{-r^2 (1 + \sin(2 \alpha) \langle u, \vartheta \rho v \rangle)} \, \intd r \\
      & \qquad \times \, \nu^{F'^\perp}(\intd \rho) \, \cH^{k - j - 1}(\intd v) \, \intd \alpha \, \nu( \intd \vartheta) \, \cH^{n - k - 1}(\intd u).
    \end{align*}

    Now we substitute $x := r \sqrt{1 + \sin(2\alpha) \langle u, \vartheta \rho v \rangle}$ in the integration
		with respect to $r$ (which is admissible, since $1 + \sin(2\alpha) \langle u, \vartheta \rho v \rangle \neq 0$ holds
		for almost all $\alpha, u, v, \vartheta, \rho$). Denoting the corresponding integral by $J_{2}$, we obtain
    \begin{align*}
      J_2 & = \frac{1}{\sqrt{1 + \sin(2\alpha) \langle u, \vartheta \rho v \rangle}^{n - j + s}} \int _0^\infty x^{n - j + s - 1} e^{-x^2} \, \intd x = \frac{\Gamma(\frac {n - j + s} 2)}{2\sqrt{1 + \sin(2\alpha) \langle u, \vartheta \rho v \rangle}^{n - j + s}}.
    \end{align*}
    Hence, we arrive at
    \begin{align*}
      J(\omega, \omega') & = \int _{\omega} \int_{\SO(n)} [F, \vartheta F']^2 Q(F \cap \vartheta F')^l \int _0^{\frac \pi 2} \int _{\omega'} \int _{\SO(F'^\perp)} \\
      & \qquad  \times \sum_{i = 0}^{s} \binom{s}{i} u^{s - i} (\vartheta \rho v)^{i} \frac {\cos(\alpha)^{n - k + s - i - 1} \sin(\alpha)^{k - j + i - 1}} {\sqrt{1 + \sin(2\alpha) \langle u, \vartheta \rho v \rangle}^{n - j + s}} \\
       & \qquad \times \nu^{F'^\perp}(\intd \rho) \, \cH^{k - j - 1}(\intd v) \, \intd \alpha \, \nu( \intd \vartheta) \, \cH^{n - k - 1}(\intd u).
    \end{align*}
    Expanding $(1 + \sin(2\alpha) \langle u, \vartheta \rho v \rangle)^{- (n - j + s)/2}$ as a binomial series gives
    \begin{align*}
      J(\omega, \omega') & = \int _{\omega} \int_{\SO(n)} [F, \vartheta F']^2 Q(F \cap \vartheta F')^l \int _0^{\frac \pi 2} \int _{\omega'} \int _{\SO(F'^\perp)} \sum_{i = 0}^{s} \binom{s}{i} u^{s - i} \\
      & \qquad \times \sum_{t = 0}^{\infty} (-2)^t \binom{\frac {n - j + s} 2 + t - 1}{t} (\vartheta \rho v)^{i} \langle u, \vartheta \rho v \rangle^t \cos(\alpha)^{n - k + s - i + t - 1}\\
      & \qquad \times \sin(\alpha)^{k - j + i + t - 1} \nu^{F'^\perp}(\intd \rho) \, \cH^{k - j - 1}(\intd v) \, \intd \alpha \, \nu( \intd \vartheta) \, \cH^{n - k - 1}(\intd u).
    \end{align*}
    The series with respect to $t$ converges absolutely for almost all $u, v, \vartheta, \alpha$; in fact,
				\begin{align*}
      & \, \sum_{t = 0}^{\infty} \bigg\| (-2)^t \binom{\frac {n - j + s} 2 + t - 1}{t}  \langle u, \vartheta \rho v \rangle^t \cos(\alpha)^{n - k + s - i + t - 1} \sin(\alpha)^{k - j + i + t - 1} \bigg\| \\
      &\qquad\le \, \sum_{t = 0}^{\infty} \binom{\frac {n - j + s} 2 + t - 1}{t} {\sin(2\alpha)}^{t}<\infty,
			\end{align*}
			since $\sin(2\alpha)<1$ for $\alpha\in(0,\frac{\pi}{2})\setminus\{\frac{\pi}{4}\}$. Therefore
    we can interchange the integration with respect to $\rho$ and the summation with respect to $t$. We denote the resulting integral with respect to $\rho$ by $J_{3}$ and obtain
    \begin{align*}
      J_3 & = \vartheta \int _{\SO(F'^\perp)} (\rho v)^{i} \left\langle p_{F'^\perp} (\vartheta^{-1} u) + p_{F'} (\vartheta^{-1} u), \rho v \right\rangle^t \, \nu^{F'^\perp}(\intd \rho) \\
      & = \| p_{\vartheta F'^\perp} (u) \|^t \, \vartheta \int _{\SO(F'^\perp)} (\rho v)^{i} \bigg\langle \underbrace{\frac {p_{F'^\perp} (\vartheta^{-1} u)} {\| p_{F'^\perp} (\vartheta^{-1} u) \|}}_{ = \pi_{F'^\perp} (\vartheta^{-1} u)}, \rho v \bigg\rangle ^t \, \nu^{F'^\perp}(\intd \rho)
    \end{align*}
		if $\vartheta^{-1}u\notin F'$ (which holds for almost all choices of $u,\vartheta$).  Note that the integration over
		$\SO(F'^\perp)$ yields the same value as an integration over all $\vartheta\in \OO(n)$ which fix $F'^0$ pointwise, since
		$\text{dim}(F'^\perp)\in\{1,\ldots,n-1\}$ and $n\ge 2$.
    Hence, an application of Lemma \ref{Lem_IntForm_VecScal} in $F'^\perp$ yields
		\begin{align*}
      J_3 & = \1\{ i + t \text{ even} \} \frac {\Gamma(\frac {k - j} 2)} {\sqrt \pi} \frac {\Gamma(t + 1)} {2^t \Gamma(\frac {k - j + i + t} 2)} \| p_{\vartheta F'^\perp} (u) \|^t \\
      & \qquad \times \sum_{x = (\frac {i - t} 2)^+ }^{\lfloor \frac i 2 \rfloor} \binom{i}{2x} \frac {\Gamma(x + \frac 1 2) } {\Gamma(\frac {t - i} 2 + x + 1)} \pi_{\vartheta F'^\perp} (u)^{i - 2x} Q(\vartheta F'^\perp)^{x}.
    \end{align*}
		This also holds if $\vartheta^{-1}u\in F'$, where both sides are zero (even if $\pi_{\vartheta F'^\perp} (u)$ is undefined).
    Hence, we conclude
    \begin{align*}
      J(\omega, \omega') & = \frac {\Gamma(\frac {k - j} 2)} {\sqrt \pi} \cH^{k - j - 1}(\omega') \int _{\omega} \int_{\SO(n)} [F, \vartheta F']^2 Q(F \cap \vartheta F')^l \int _0^{\frac \pi 2} \sum_{i = 0}^{s} (-1)^i \binom{s}{i} u^{s - i} \\
      & \qquad \times \sum_{t = 0}^{\infty} \1 \{ i + t \text{ even} \} \binom{\frac {n - j + s} 2 + t - 1}{t} \frac {\Gamma(t + 1)} {\Gamma(\frac {k - j + i + t} 2)} \sum_{x = (\frac{i - t} 2)^+}^{\lfloor \frac i 2 \rfloor} \binom{i}{2x} \\
      & \qquad \times \frac {\Gamma(x + \frac 1 2)} {\Gamma(\frac {t - i} 2 + x + 1)} \| p_{\vartheta F'^\perp} (u) \|^t \pi_{\vartheta F'^\perp} (u)^{i - 2x} Q(\vartheta F'^\perp)^{x} \\
      & \qquad \times \cos(\alpha)^{n - k + s - i + t - 1} \sin(\alpha)^{k - j + i + t - 1} \, \intd \alpha \, \nu( \intd \vartheta) \, \cH^{n - k - 1}(\intd u),
    \end{align*}
    where we used that $(-1)^t = (-1)^i$ provided that $i + t$ is even.

    As the series with respect to $t$ converges absolutely for almost all $u, \vartheta$ (by a similar argument as before,
		which uses that $\| p_{\vartheta F'^\perp} (u) \| < 1$, that is, $\vartheta^{-1}u\notin F'^\perp$,  almost surely), we can rearrange the order of integration and summation to get
    \begin{align*}
      J(\omega, \omega') & = \frac {\Gamma(\frac {k - j} 2)} {\sqrt \pi} \cH^{k - j - 1}(\omega') \int _{\omega} \int_{\SO(n)} [F, \vartheta F']^2 Q(F \cap \vartheta F')^l \sum_{i = 0}^{s} \sum_{x = 0}^{\lfloor \frac i 2 \rfloor} (-1)^i \binom{s}{i} \binom{i}{2x} \\
      & \qquad \times \Gamma(x + \tfrac 1 2) u^{s - i} \sum_{t = i - 2x}^{\infty} \1 \{ i + t \text{ even} \} \binom{\frac {n - j + s} 2 + t - 1}{t} \\
      & \qquad \times \frac {\Gamma(t + 1)} {\Gamma(\frac {k - j + i + t} 2) \Gamma(\frac {t - i} 2 + x + 1)} \| p_{\vartheta F'^\perp} (u) \|^t \pi_{\vartheta F'^\perp} (u)^{i - 2x} Q(\vartheta F'^\perp)^{x} \\
      & \qquad \times  {\int _0^{\frac \pi 2} \cos(\alpha)^{n - k + s - i + t - 1} \sin(\alpha)^{k - j + i + t - 1} \, \intd \alpha}  \, \nu( \intd \vartheta) \, \cH^{n - k - 1}(\intd u) \\
      & = \frac {\Gamma(\frac {k - j} 2)} {2\sqrt \pi \Gamma(\frac {n - j + s} 2)} \cH^{k - j - 1}(\omega') \int _{\omega} \int_{\SO(n)} [F, \vartheta F']^2 Q(F \cap \vartheta F')^l \sum_{i = 0}^{s} \sum_{x = 0}^{\lfloor \frac i 2 \rfloor} (-1)^i \\
      & \qquad \times \binom{s}{i} \binom{i}{2x} \Gamma(x + \tfrac 1 2) u^{s - i} \pi_{\vartheta F'^\perp} (u)^{i - 2x} Q(\vartheta F'^\perp)^{x} \\
      & \qquad \times \sum_{t = i - 2x}^{\infty} \1 \{ i + t \text{ even} \} \frac {\Gamma(\frac {n - k + s - i + t} 2)} {\Gamma(\frac {t - i} 2 + x + 1)} \| p_{\vartheta F'^\perp} (u) \|^t \, \nu( \intd \vartheta) \, \cH^{n - k - 1}(\intd u) .
    \end{align*}
    We denote the series with respect to $t$ by $S_{2}$. Then, for $\vartheta^{-1}u\notin F'^\perp$, we obtain (after an index shift)
    \begin{align*}
      S_2 & = \sum_{t = 0}^{\infty} \underbrace{\1 \{ 2i - 2x + t \text{ even} \}}_{ = \1\{t \text{ even} \} } \frac {\Gamma(\frac {n - k + s + t - 2x} 2)} {\Gamma(\frac {t} 2 + 1)} \| p_{\vartheta F'^\perp} (u) \|^{i - 2x + t} \\
      & = \| p_{\vartheta F'^\perp} (u) \|^{i - 2x} \sum_{t = 0}^{\infty} \frac {\Gamma(\frac {n - k + s} 2 + t - x)} {\Gamma(t + 1)} \| p_{\vartheta F'^\perp} (u) \|^{2t} \\
      & = \Gamma(\tfrac {n - k + s} 2 - x) \| p_{\vartheta F'^\perp} (u) \|^{i - 2x} \sum_{t = 0}^{\infty} \binom{ - \frac{n - k + s} 2 + x}{t} (- \| p_{\vartheta F'^\perp} (u) \|^{2})^t ,
    \end{align*}
    where the remaining series is just a binomial series. Hence, we get
    \begin{align*}
      S_2 & = \Gamma(\tfrac {n - k + s} 2 - x) \| p_{\vartheta F'^\perp} (u) \|^{i - 2x} (1 - \| p_{\vartheta F'^\perp} (u) \|^{2})^{- \frac{n - k + s} 2 + x} \\
      & = \Gamma(\tfrac {n - k + s} 2 - x) \| p_{\vartheta F'^\perp} (u) \|^{i - 2x} \| p_{\vartheta F'} (u) \|^{- n + k - s + 2x}.
    \end{align*}
    Expanding $Q(\vartheta F'^\perp)^x = (Q - Q(\vartheta F'))^x$ in $J(\omega, \omega')$, we obtain
    \begin{align*}
      J(\omega, \omega') & = \frac {\Gamma(\frac {k - j} 2)} {2\sqrt \pi \Gamma(\frac {n - j + s} 2)} \cH^{k - j - 1}(\omega') \int _{\omega} \int_{\SO(n)} \sum_{i = 0}^{s} \sum_{x = 0}^{\lfloor \frac i 2 \rfloor} \sum_{y = 0}^x (-1)^{i + y} \binom{s}{i} \binom{i}{2x} \binom{x}{y} \Gamma(x + \tfrac 1 2) \\
      & \qquad \times \Gamma(\tfrac {n - k + s} 2 - x) u^{s - i} Q^{x - y} [F, \vartheta F']^2 \| p_{\vartheta F'} (u) \|^{- n + k - s + 2x} \\
      & \qquad \times  p_{\vartheta F'^\perp} (u) ^{i - 2x}  Q(\vartheta F')^{y} Q(F \cap \vartheta F')^l \, \nu( \intd \vartheta) \, \cH^{n - k - 1}(\intd u).
    \end{align*}
    Changing the order of the summation under the integral gives
    \begin{align*}
      J(\omega, \omega') & = \frac {\Gamma(\frac {k - j} 2)} {2\sqrt \pi \Gamma(\frac {n - j + s} 2)} \cH^{k - j - 1}(\omega') \int _{\omega} \int_{\SO(n)} \sum_{x = 0}^{\lfloor \frac s 2 \rfloor} \sum_{y = 0}^x \sum_{i = 2x}^{s} (-1)^{i + y} \binom{s}{i} \binom{i}{2x} \binom{x}{y} \Gamma(x + \tfrac 1 2) \\
      & \qquad \times \Gamma(\tfrac {n - k + s} 2 - x) u^{s - i} Q^{x - y} [F, \vartheta F']^2 \| p_{\vartheta F'} (u) \|^{- n + k - s + 2x} p_{\vartheta F'^\perp} (u) ^{i - 2x} \\
      & \qquad \times Q(\vartheta F')^{y} Q(F \cap \vartheta F')^l \, \nu( \intd \vartheta) \, \cH^{n - k - 1}(\intd u).
    \end{align*}
    We denote the integral with respect to $\vartheta$ in $J(\omega, \omega')$ by $J_{4}$. Since $n\ge 2$ and $1\le n-k+j\le n-1$, Lemma \ref{Lem_TraFo_HSS} yields
    \begin{align*}
      J_{4} & = \int_{\GOp(n, n - k + j)} \sum_{x = 0}^{\lfloor \frac s 2 \rfloor} \sum_{y = 0}^x \sum_{i = 2x}^{s} (-1)^{i + y} \binom{s}{i} \binom{i}{2x} \binom{x}{y} \Gamma(x + \tfrac 1 2) \Gamma(\tfrac {n - k + s} 2 - x) \\
      & \qquad \times u^{s - i} Q^{x - y} [F, G]^2 \| p_{G} (u) \|^{- n + k - s + 2x} p_{G^\perp} (u) ^{i - 2x} Q(G)^{y} Q(F \cap G)^l \, \nu_{n - k + j}( \intd G) \\
      & = \frac {\omega_{n - k + j}} {2 \omega_n} \int_{\GOp(u^\perp, n - k + j - 1)} \int _{-1}^1 \int _{U^\perp \cap u^\perp \cap \S} \sum_{x = 0}^{\lfloor \frac s 2 \rfloor} \sum_{y = 0}^x \sum_{i = 2x}^{s} (-1)^{i + y} \binom{s}{i} \binom{i}{2x} \binom{x}{y} \Gamma(x + \tfrac 1 2) \\
      & \qquad \times \Gamma(\tfrac {n - k + s} 2 - x) u^{s - i} Q^{x - y} | z |^{n - k + j - 1} \left( 1 - z^2 \right)^{\frac {k - j - 2} 2} [F, \lin \{ U, z u + \sqrt{1 - z^2} w \} ]^2 \\
      & \qquad \times \| p_{\lin \{ U, z u + \sqrt{1 - z^2} w \}} (u) \|^{- n + k - s + 2x} Q(\lin \{ U, z u + \sqrt{1 - z^2} w \})^{y} \\
      & \qquad \times Q(F \cap \lin \{ U, z u + \sqrt{1 - z^2} w \})^l p_{\lin \{ U, z u + \sqrt{1 - z^2} w \}^\perp} (u)^{i - 2x} \, \\
      & \qquad \times \cH^{k - j - 1}(\intd w) \, \intd z \, \nu^{u^\perp}_{n - k + j - 1}( \intd U).
    \end{align*}
		The required integrability will be clear from \eqref{absint} below.
    Since $u,w \in U^\perp$, we obtain
    \begin{align*}
      p_{\lin \{ U, z u + \sqrt{1 - z^2} w \}^\perp} (u) & = p_{U^\perp \cap (z u + \sqrt{1 - z^2} w)^\perp} (u)
       = p_{(z u + \sqrt{1 - z^2} w)^\perp} (u) \\
      & = u -  p_{z u + \sqrt{1 - z^2} w} (u)  = u -  z(z u + \sqrt{1 - z^2} w) \\
      & = (1 - z^2) u - z \sqrt{1 - z^2} w \\
			&=\sqrt{1 - z^2}\cdot  (\sqrt{1 - z^2} u - |z| \sign(z) w)\\
      & =  \| p_{\lin \{ U, z u + \sqrt{1 - z^2} w \}^\perp} (u) \| \cdot  \pi_{\lin \{ U, z u + \sqrt{1 - z^2} w \}^\perp} (u)
    \end{align*}
    and  $\| p_{\lin \{ U, z u + \sqrt{1 - z^2} w \}} (u) \| = |z|$.
    Furthermore, since also $F\subset u^\perp$, we have
    \begin{align*}
      [F, \lin \{ U, z u + \sqrt{1 - z^2} w \}] & = [F, U]^{(u^\perp)}  |z| , \\
      Q(\lin \{ U, z u + \sqrt{1 - z^2} w \}) & = Q(U) + (|z| u + \sqrt{1 - z^2} \sign(z) w )^2,
    \end{align*}
    and, for all $z  \in [-1, 1]\setminus\{0\}$ and $w \in U^\perp \cap u^\perp \cap \S$,
    \begin{align*}
       Q(F \cap \lin \{ U, z u + \sqrt{1 - z^2} w \}) = Q( F \cap U),
    \end{align*}	
    as $F\subset u^\perp$ and $U = \lin \{ U, z u + \sqrt{1 - z^2} w \} \cap u^\perp$.
    Using the fact that the integration with respect to $w$ is invariant under reflection in the origin, we obtain
    \begin{align*}
      J_{4} & = \frac {\omega_{n - k + j}} {2 \omega_n} \int_{\GOp(u^\perp, n - k + j - 1)} \int _{-1}^1 \int _{U^\perp \cap u^\perp \cap \S} \sum_{x = 0}^{\lfloor \frac s 2 \rfloor} \sum_{y = 0}^x \sum_{i = 2x}^{s} (-1)^{i + y} \binom{s}{i} \binom{i}{2x} \binom{x}{y} \Gamma(x + \tfrac 1 2) \\
      & \qquad \times \Gamma(\tfrac {n - k + s} 2 - x) u^{s - i} Q^{x - y} | z |^{j - s + 2x + 1} \left( 1 - z^2 \right)^{\frac {k - j + i - 2x - 2} 2} \left( \sqrt{1 - z^2} u - |z| w \right)^{i - 2x} \\
      & \qquad \times \left([F, U]^{(u^\perp)}\right)^2 \left( Q(U) + (|z| u + \sqrt{1 - z^2} w )^2 \right)^{y} Q(F \cap U)^l \\
      & \qquad \times \, \cH^{k - j - 1}(\intd w) \, \intd z \, \nu^{u^\perp}_{n - k + j - 1}( \intd U).
    \end{align*}
    Binomial expansion yields
    \begin{align*}
      \left( \sqrt{1 - z^2} u - |z| w \right)^{i - 2x} = \sum_{\alpha = 0}^{i - 2x} (-1)^{\alpha} \binom{i - 2x}{\alpha} \big( \sqrt{1 - z^2} u \big)^{i - 2x - \alpha} \big( |z| w \big)^{\alpha}.
    \end{align*}
    A change of the order of summation gives
    \begin{align*}
      J_{4} & = \frac {\omega_{n - k + j}} {2 \omega_n} \int_{\GOp(u^\perp, n - k + j - 1)} \left([F, U]^{(u^\perp)}\right)^2 \int _{-1}^1 \int _{U^\perp \cap u^\perp \cap \S} \sum_{x = 0}^{\lfloor \frac s 2 \rfloor} \sum_{y = 0}^x \sum_{\alpha = 0}^{s - 2x} (-1)^{y} \binom{x}{y} \\
      & \qquad \times w^\alpha \sum_{i = 2x + \alpha}^{s} (-1)^{i + \alpha} \binom{s}{i} \binom{i}{2x} \binom{i - 2x}{\alpha} \left( 1 - z^2 \right)^{i} \Gamma(\tfrac {n - k + s} 2 - x) \\
      & \qquad \times \Gamma(x + \tfrac 1 2) u^{s - 2x - \alpha} Q^{x - y} | z |^{j - s + 2x + \alpha + 1} \left( 1 - z^2 \right)^{\frac {k - j - 4x - \alpha - 2} 2} Q(F \cap U)^l \\
      & \qquad \times \left( Q(U) + (|z| u + \sqrt{1 - z^2} w)^2 \right)^{y} \, \cH^{k - j - 1}(\intd w) \, \intd z \, \nu^{u^\perp}_{n - k + j - 1}( \intd U).
    \end{align*}
    With Lemma \ref{Lem_Zeil_is} we conclude
    \begin{align}\label{absint}
      J_{4} & = \frac {\omega_{n - k + j}} {2 \omega_n} \sum_{x = 0}^{\lfloor \frac s 2 \rfloor}
			\sum_{y = 0}^x \sum_{\alpha = 0}^{s - 2x} (-1)^{y} \binom{x}{y} \binom{s}{2x}
			\binom{s - 2x}{\alpha} \Gamma(x + \tfrac 1 2) \Gamma(\tfrac {n - k + s} 2 - x)\nonumber \\
      & \qquad \times u^{s - 2x - \alpha} Q^{x - y} \int_{\GOp(u^\perp, n - k + j - 1)}
			\left([F, U]^{(u^\perp)}\right)^2 \int _{-1}^1 \int _{U^\perp \cap u^\perp \cap \S} \nonumber\\
      & \qquad \times | z |^{j + s - 2x - \alpha + 1} \left( 1 - z^2 \right)^{\frac {k - j + \alpha - 2} 2} Q(F \cap U)^l w^\alpha \nonumber\\
      & \qquad \times \left( Q(U) + (|z| u + \sqrt{1 - z^2} w)^2 \right)^{y} \, \cH^{k - j - 1}(\intd w) \,
			\intd z \, \nu^{u^\perp}_{n - k + j - 1}( \intd U).
    \end{align}
		 At this point we easily see that the integrals in $J_{4}$ are finite, since $j+s-2x-\alpha+1\ge 0$ and $k-j+\alpha-2\ge -1$.
		In fact, the absolute values of the integrands have finite integral,
		which also justifies the application of Lemma \ref{Lem_TraFo_HSS} above.
		Therefore, we can change the order of summation and integration from now on.
		We write $J_5$ for the integral with respect to $U$ multiplied by the factor
		$ {\omega_{n - k + j}} /{(2 \omega_n)} $.
		
By (twofold) binomial expansion of $( Q(U) + (|z| u + \sqrt{1 - z^2} w \})^2 )^{y}$ we obtain
    \begin{align*}
      J_{5} & = \frac {\omega_{n - k + j}} {2 \omega_n} \sum_{\beta = 0}^{y} \binom{y}{\beta} \int_{\GOp(u^\perp, n - k + j - 1)} \left([F, U]^{(u^\perp)}\right)^2 Q(U)^{y - \beta} Q(F \cap U)^l \\
      & \qquad \times \int _{-1}^1 | z |^{j + s - 2x - \alpha + 1} \left( 1 - z^2 \right)^{\frac {k - j + \alpha - 2} 2} \int _{U^\perp \cap u^\perp \cap \S} w^\alpha (|z| u + \sqrt{1 - z^2} w)^{2\beta} \\
      & \qquad \times \, \cH^{k - j - 1}(\intd w) \, \intd z \, \nu^{u^\perp}_{n - k + j - 1}( \intd U) \\
      & = \frac {\omega_{n - k + j}} {2 \omega_n} \sum_{\beta = 0}^{y} \sum_{\gamma = 0}^{2\beta} \binom{y}{\beta} \binom{2\beta}{\gamma} u^{2\beta - \gamma} \int_{\GOp(u^\perp, n - k + j - 1)} \left([F, U]^{(u^\perp)}\right)^2 Q(U)^{y - \beta} \\
      & \qquad \times Q(F \cap U)^l \ \int _{-1}^1 | z |^{j + s - 2x - \alpha + 2\beta - \gamma + 1} \left( 1 - z^2 \right)^{\frac {k - j + \alpha + \gamma - 2} 2} \, \intd z  \\
      & \qquad \times  \int _{U^\perp \cap u^\perp \cap \S} w^{\alpha + \gamma} \, \cH^{k - j - 1}(\intd w)  \, \nu^{u^\perp}_{n - k + j - 1}( \intd U).
    \end{align*}
		Using Lemma \ref{Lem_IntForm_Sph} and expressing the involved   spherical volumes in terms of Gamma functions, we get
    \begin{align*}
      J_{5} & = \frac {\Gamma( \frac {n} 2)} {\sqrt \pi \Gamma( \frac {n - k + j} 2)} \sum_{\beta = 0}^{y} \sum_{\gamma = 0}^{2\beta} \1\{\alpha + \gamma \text{ even}\} \binom{y}{\beta} \binom{2\beta}{\gamma} \\
      & \qquad \times \frac{\Gamma(\frac {j + s - \alpha - \gamma} 2 - x + \beta + 1) \Gamma(\frac {\alpha + \gamma + 1} 2)} {\Gamma(\frac {k + s} 2 - x + \beta + 1)} u^{2\beta - \gamma} \int_{\GOp(u^\perp, n - k + j - 1)} \\
      & \qquad \times \left([F, U]^{(u^\perp)}\right)^2 Q(U)^{y - \beta} Q(F \cap U)^l Q(U^\perp \cap u^\perp)^{\frac{\alpha + \gamma} 2} \, \nu^{u^\perp}_{n - k + j - 1}( \intd U).
    \end{align*}
    With an index shift in the summation with respect to $\gamma$ we obtain
    \begin{align*}
      J_{5} & = \frac {\Gamma( \frac {n} 2)} {\sqrt \pi \Gamma( \frac {n - k + j} 2)} \sum_{\beta = 0}^{y} \sum_{\gamma = \alpha}^{\alpha + 2\beta} \1\{\gamma \text{ even}\} \binom{y}{\beta} \binom{2\beta}{\gamma - \alpha} \\
      & \qquad \times \frac{\Gamma(\frac {j + s - \gamma} 2 - x + \beta + 1) \Gamma(\frac {\gamma + 1} 2)} {\Gamma(\frac {k + s } 2 - x + \beta + 1)} u^{\alpha + 2\beta - \gamma} \int_{\GOp(u^\perp, n - k + j - 1)} \\
      & \qquad \times \left([F, U]^{(u^\perp)}\right)^2 Q(U)^{y - \beta} Q(F \cap U)^l Q(U^\perp \cap u^\perp)^{\frac{\gamma} 2} \, \nu^{u^\perp}_{n - k + j - 1}( \intd U).
    \end{align*}
		We plug $J_5$ into $J_4$ and change the order of summation to get
    \begin{align*}
      J_{4} & = \frac {\Gamma( \frac {n} 2)} {\sqrt \pi \Gamma( \frac {n - k + j} 2)} \sum_{x = 0}^{\lfloor \frac s 2 \rfloor} \sum_{y = 0}^x \sum_{\beta = 0}^{y} \sum_{\gamma = 0}^{s - 2x + 2\beta} (-1)^{y} \1\{\gamma \text{ even}\} \binom{s}{2x} \binom{x}{y} \binom{y}{\beta} \\
      & \qquad \times \Gamma(x + \tfrac 1 2) \Gamma(\tfrac {n - k + s} 2 - x) \sum_{\alpha = (\gamma - 2\beta)^+}^{\min\{s - 2x, \gamma\}} \binom{s - 2x}{\alpha} \binom{2\beta}{\gamma - \alpha} \\
      & \qquad \times \frac{\Gamma(\frac {j + s-\gamma} 2 - x + \beta + 1) \Gamma(\frac {\gamma + 1} 2)} {\Gamma(\frac {k + s} 2 - x + \beta + 1)} u^{s - 2x + 2\beta - \gamma} Q^{x - y} \int_{\GOp(u^\perp, n - k + j - 1)} \\
      & \qquad \times \left([F, U]^{(u^\perp)}\right)^2 Q(U)^{y - \beta} Q(F \cap U)^l Q(U^\perp \cap u	^\perp)^{\frac{\gamma} 2} \, \nu^{u^\perp}_{n - k + j - 1}( \intd U).
    \end{align*}
   From Vandermonde's identity we conclude that
    \begin{align*}
      &\sum_{\alpha = (\gamma - 2\beta)^+}^{\min\{s - 2x, \gamma\}} \binom{s - 2x}{\alpha} \binom{2\beta}{\gamma - \alpha} = \binom{s - 2x + 2\beta}{\gamma},
    \end{align*}
    and thus
    \begin{align*}
      J_{4} & = \frac {\Gamma( \frac {n} 2)} {\sqrt \pi \Gamma( \frac {n - k + j} 2)} \sum_{x = 0}^{\lfloor \frac s 2 \rfloor} \sum_{y = 0}^x \sum_{\beta = 0}^{y} \sum_{\gamma = 0}^{\lfloor \frac {s} 2 \rfloor - x + \beta} (-1)^y \binom{s}{2x} \binom{x}{y} \binom{y}{\beta} \binom{s - 2x + 2\beta}{2\gamma} \Gamma(x + \tfrac 1 2) \\
      & \qquad \times \Gamma(\tfrac {n - k + s} 2 - x) \frac{\Gamma(\frac {j + s} 2 - x + \beta - \gamma + 1) \Gamma(\gamma + \frac {1} 2)} {\Gamma(\frac {k + s} 2 - x + \beta + 1)} u^{s - 2x + 2\beta - 2\gamma} Q^{x-y}\\
      & \qquad \times \int_{\GOp(u^\perp, n - k + j - 1)} \left([F, U]^{(u^\perp)}\right)^2 Q(U)^{y - \beta} Q(F \cap U)^l Q(U^\perp \cap u	^\perp)^{\gamma} \, \nu^{u^\perp}_{n - k + j - 1}( \intd U).
    \end{align*}
    Furthermore, the term $Q(U^\perp \cap u	^\perp)^{\gamma} = (Q(u^\perp) - Q(U) )^{\gamma}$
		can be expanded so that we obtain
    \begin{align*}
      J(\omega, \omega') & = \frac {\Gamma( \frac {n} 2) \Gamma(\frac {k - j} 2)} {2\pi \Gamma( \frac {n - k + j} 2) \Gamma(\frac {n - j + s} 2)} \cH^{k - j - 1}(\omega') \sum_{x = 0}^{\lfloor \frac s 2 \rfloor} \sum_{y = 0}^x \sum_{\beta = 0}^{y} \sum_{\gamma = 0}^{\lfloor \frac {s} 2 \rfloor - x + \beta} \sum_{\delta = 0}^{\gamma} (-1)^{y + \delta} \\
      & \qquad \times \binom{s}{2x} \binom{x}{y} \binom{y}{\beta} \binom{s - 2x + 2\beta}{2\gamma} \binom{\gamma}{\delta} {\textstyle \Gamma(x + \frac 1 2) \Gamma(\frac {n - k + s} 2 - x)} \\
      & \qquad \times \frac{\Gamma(\frac {j + s} 2 - x + \beta - \gamma + 1) \Gamma(\gamma + \frac {1} 2)} {\Gamma(\frac {k + s} 2 - x + \beta + 1)} Q^{x - y} \int _{\omega} u^{s - 2x + 2\beta - 2\gamma} Q(u^\perp)^{\gamma - \delta} \\
      & \qquad \times \int_{\GOp(u^\perp, n - k + j - 1)} \left([F, U]^{(u^\perp)}\right)^2 Q(U)^{y - \beta + \delta} Q(F \cap U)^l \, \nu^{u^\perp}_{n - k + j - 1}( \intd U) \\
      & \qquad \times \, \cH^{n - k - 1}(\intd u).
    \end{align*}
    Reversing the order of   summation, first with respect to $\beta$, and then with respect to $y$, we get
    \begin{align*}
      J(\omega, \omega') & = \frac {\Gamma( \frac {n} 2) \Gamma(\frac {k - j} 2)} {2\pi \Gamma( \frac {n - k + j} 2) \Gamma(\frac {n - j + s} 2)} \cH^{k - j - 1}(\omega') \sum_{x = 0}^{\lfloor \frac s 2 \rfloor} \sum_{y = 0}^x \sum_{\beta = 0}^{x - y} \sum_{\gamma = 0}^{\lfloor \frac {s} 2 \rfloor - y - \beta} \sum_{\delta = 0}^{\gamma} (-1)^{x + y + \delta} \\
      & \qquad \times \binom{s}{2x} \binom{x}{y} \binom{x - y}{\beta} \binom{s - 2y - 2\beta}{2\gamma} \binom{\gamma}{\delta} \Gamma(x + \tfrac 1 2) \Gamma(\tfrac {n - k + s} 2 - x) \\
      & \qquad \times \frac{\Gamma(\frac {j + s} 2 - y - \beta - \gamma + 1) \Gamma(\gamma + \frac {1} 2)} {\Gamma(\frac {k + s} 2 - y - \beta + 1)} Q^{y} \int _{\omega} u^{s - 2y - 2\beta - 2\gamma} Q(u^\perp)^{\gamma - \delta} \\
      & \qquad \times \int_{\GOp(u^\perp, n - k + j - 1)} \left([F, U]^{(u^\perp)}\right)^2 Q(U)^{\beta + \delta} Q(F \cap U)^l \, \nu^{u^\perp}_{n - k + j - 1}( \intd U)  \\
      & \qquad \times \cH^{n - k - 1}(\intd u).
    \end{align*}
      A change of the order of summation yields
    \begin{align*}
      J(\omega, \omega') & = \frac {\Gamma( \frac {n} 2) \Gamma(\frac {k - j} 2)} {2\pi \Gamma( \frac {n - k + j} 2) \Gamma(\frac {n - j + s} 2)} \cH^{k - j - 1}(\omega') \sum_{y = 0}^{\lfloor \frac s 2 \rfloor} \sum_{\beta = 0}^{\lfloor \frac s 2 \rfloor - y} \sum_{\gamma = 0}^{\lfloor \frac {s} 2 \rfloor - y - \beta} \sum_{\delta = 0}^{\gamma} (-1)^{y + \delta} \\
      & \qquad \times \sum_{x = y + \beta}^{\lfloor \frac s 2 \rfloor} (-1)^x \binom{s}{2x} \binom{x}{y} \binom{x - y}{\beta} {\textstyle \Gamma(x + \frac 1 2) \Gamma(\frac {n - k + s} 2 - x)} \\
      & \qquad \times \binom{s - 2y - 2\beta}{2\gamma} \binom{\gamma}{\delta} \frac{\Gamma(\frac {j + s} 2 - y - \beta - \gamma + 1) \Gamma(\gamma + \frac {1} 2)} {\Gamma(\frac {k + s} 2 - y - \beta + 1)} \\
      & \qquad \times Q^{y} \int _{\omega} u^{s - 2y - 2\beta - 2\gamma} Q(u^\perp)^{\gamma - \delta} \int_{\GOp(u^\perp, n - k + j - 1)} \left([F, U]^{(u^\perp)}\right)^2 \\
      & \qquad \times Q(U)^{\beta + \delta} Q(F \cap U)^l \, \nu^{u^\perp}_{n - k + j - 1}( \intd U)  \, \cH^{n - k - 1}(\intd u).
    \end{align*}
    By Legendre's duplication formula, applied several times, we obtain
    \begin{align*}
      & \binom{s}{2x} \binom{x}{y} \binom{x - y}{\beta} \Gamma(x + \tfrac 1 2) \\
      & \qquad = \binom{s}{2y + 2\beta} \binom{y + \beta}{y} \Gamma(y + \beta + \tfrac 1 2) \frac{\Gamma(\frac{s + 1}{2} - y - \beta) \Gamma(\frac{s}{2} - y - \beta + 1)}{\Gamma(\frac{s + 1}{2} - x) \Gamma(\frac{s}{2} - x + 1) (x - y - \beta)!} \\
      & \qquad = \binom{s}{2y + 2\beta} \binom{y + \beta}{y} \Gamma(y + \beta + \tfrac 1 2) \binom{\lfloor \frac s 2 \rfloor - y - \beta}{x - y - \beta} \frac{\Gamma(\lfloor \tfrac{s + 1}{2} \rfloor - y - \beta + \tfrac{1}{2})}{\Gamma(\lfloor \frac{s + 1}{2} \rfloor - x + \tfrac{1}{2})}.
    \end{align*}
    We denote the resulting sum with respect to $x$ by $S_3$. An index shift and a change of the order of summation imply that
    \begin{align*}
      S_{3} & = \sum_{x = 0}^{\lfloor \frac s 2 \rfloor - y - \beta} (-1)^{\lfloor \frac s 2 \rfloor + x} \binom{\lfloor \frac s 2 \rfloor - y - \beta}{x} \frac{\Gamma(\tfrac {n - k + s} 2 - \lfloor \frac s 2 \rfloor + x)}{\Gamma(\lfloor \frac{s + 1}{2} \rfloor - \lfloor \frac s 2 \rfloor + x + \tfrac{1}{2})}.
    \end{align*}
    Hence, an application of relation \eqref{lemZeilmod} and then of relation \eqref{Form_Gam_Cont} with $c = \tfrac{n-k+s}{2}-\lfloor \tfrac{s + 1}{2} \rfloor-\tfrac{1}{2}$
		and $m=\lfloor \frac s 2 \rfloor-y-\beta\in\N_0$ yield
    \begin{align*}
      S_{3} & = (-1)^{\lfloor \frac s 2 \rfloor} \frac{\Gamma(\tfrac {n - k + s} 2 - \lfloor \frac s 2 \rfloor) \Gamma( \overbrace{\textstyle \lfloor \frac{s + 1}{2} \rfloor + \lfloor \frac s 2 \rfloor}^{ = s} - \tfrac {n - k + s} 2 - y - \beta + \tfrac{1}{2})}{\Gamma(\lfloor \frac{s + 1}{2} \rfloor - y - \beta + \tfrac{1}{2}) \Gamma(\lfloor \frac{s + 1}{2} \rfloor - \tfrac {n - k + s} 2 + \tfrac{1}{2})} \\
      & = \overbrace{(-1)^{s + \lfloor \frac s 2 \rfloor + \lfloor \frac{s + 1}{2} \rfloor + y + \beta}}^{ = (-1)^{y + \beta}} \frac{\overbrace{\textstyle \Gamma(\tfrac {n - k + s} 2 - \lfloor \frac s 2 \rfloor) \Gamma(\tfrac {n - k + s} 2 - \lfloor \frac{s + 1}{2} \rfloor + \tfrac{1}{2})}^{ = \Gamma(\frac {n - k} 2) \Gamma(\frac {n - k + 1} 2)}}{\Gamma(\lfloor \frac{s + 1}{2} \rfloor - y - \beta + \tfrac{1}{2}) \Gamma( \tfrac {n - k + 1} 2 + y + \beta - \tfrac{s}{2})},
    \end{align*}
		where we used that $c\ge 0$, except for $k=n-1$ and odd $s$ when $c=-1/2$.
    Note that $S_3 = 0$ if $n - k + s$ is odd and $n - k + 1 \leq s - 2y - 2\beta$. Thus, we obtain
    \begin{align*}
      J(\omega, \omega') & = \frac {\Gamma( \frac {n} 2) \Gamma(\frac{n - k} 2) \Gamma(\frac {n - k + 1} 2) \Gamma(\frac {k - j} 2)} {2\pi \Gamma( \frac {n - k + j} 2) \Gamma(\frac {n - j + s} 2)} \cH^{k - j - 1}(\omega') \sum_{y = 0}^{\lfloor \frac s 2 \rfloor} \sum_{\beta = 0}^{\lfloor \frac s 2 \rfloor - y} \sum_{\gamma = 0}^{\lfloor \frac {s} 2 \rfloor - y - \beta} \sum_{\delta = 0}^{\gamma} (-1)^{\beta + \delta} \\
      & \quad \times \binom{s}{2y + 2\beta} \binom{y + \beta}{y} \binom{s - 2y - 2\beta}{2\gamma} \binom{\gamma}{\delta} \frac{\Gamma(\frac {j + s} 2 - y - \beta - \gamma + 1) \Gamma(\gamma + \frac {1} 2)} {\Gamma(\frac {k + s} 2 - y - \beta + 1)} \\
      & \quad \times \frac {\Gamma(y + \beta + \frac 1 2)} {\Gamma(\frac {n - k + 1} 2 + y + \beta - \frac s 2)} Q^{y} \int _{\omega} u^{s - 2y - 2\beta - 2\gamma} Q(u^\perp)^{\gamma - \delta} \int_{\GOp(u^\perp, n - k + j - 1)} \\
      & \quad \times \left([F, U]^{(u^\perp)}\right)^2 Q(U)^{\beta + \delta} Q(F \cap U)^l \, \nu^{u^\perp}_{n - k + j - 1}( \intd U)  \, \cH^{n - k - 1}(\intd u).
    \end{align*}
We conclude from Proposition \ref{Prop_Main} that
    \begin{align*}
      & \int_{\GOp(u^\perp, n - k + j - 1)} \left([F, U]^{(u^\perp)}\right)^2 Q(U)^{\beta + \delta} Q(F \cap U)^l \, \nu^{u^\perp}_{n - k + j - 1}( \intd U) \\
      & \qquad = \frac {(n - k + j - 1)! k!} {(n - 1)! j!} \frac {\Gamma(\frac {n + 1} 2) \Gamma(\frac {j} 2 + l) \Gamma(\frac{k} 2)}{\Gamma(\frac {n + 1} 2 + \beta + \delta) \Gamma(\frac{j} 2) \Gamma(\frac {k - j} 2 ) \Gamma(\frac {n - k + j + 1} 2)} \sum_{i = 0}^{\beta + \delta} \binom{\beta + \delta}{i} \\
      & \qquad\qquad \times \frac {(i + l - 2)!} {(l - 2)!} \frac {\Gamma(\frac {k - j} 2 + i) \Gamma(\frac {n - k + j + 1} 2 + \beta + \delta - i)} {\Gamma(\frac {k} 2 + l + i)} Q(u^\perp)^{\beta + \delta - i} Q(F)^{l + i},
    \end{align*}
    and hence we get
    \begin{align*}
      J(\omega, \omega') & = {\frac {\Gamma( \frac {n} 2) \Gamma(\frac {n + 1} 2) (n - k + j - 1)!} {(n - 1)! \Gamma( \frac {n - k + j} 2) \Gamma(\frac {n - k + j + 1} 2)}} \frac {k! \Gamma(\frac{k} 2) {\textstyle\Gamma(\frac{n - k} 2) \Gamma(\frac {n - k + 1} 2)} \Gamma(\frac {j} 2 + l)} {2 \pi j! \Gamma(\frac{j} 2) \Gamma(\frac {n - j + s} 2)} \cH^{k - j - 1}(\omega') \\
      & \qquad \times \sum_{y = 0}^{\lfloor \frac s 2 \rfloor} \sum_{\beta = 0}^{\lfloor \frac s 2 \rfloor - y} \sum_{\gamma = 0}^{\lfloor \frac {s} 2 \rfloor - y - \beta} \sum_{\delta = 0}^{\gamma} \sum_{i = 0}^{\beta + \delta} (-1)^{\beta + \delta} \binom{s}{2y + 2\beta} \binom{y + \beta}{y} \binom{s - 2y - 2\beta}{2\gamma} \binom{\gamma}{\delta} \\
      & \qquad \times \binom{\beta + \delta}{i} \frac{\Gamma(\frac {j + s} 2 - y - \beta - \gamma + 1) \Gamma(\gamma + \frac {1} 2)} {\Gamma(\frac {k + s} 2 - y - \beta + 1)} \frac {\Gamma(y + \beta + \frac 1 2)} {\Gamma(\frac {n - k + 1} 2 + y + \beta - \frac s 2)} \\
      & \qquad \times \frac {(i + l - 2)!} {(l - 2)!} \frac {\Gamma(\frac {k - j} 2 + i) } {\Gamma(\frac {k} 2 + l + i)} \frac {\Gamma(\frac {n - k + j + 1} 2 + \beta + \delta - i)}{\Gamma(\frac {n + 1} 2 + \beta + \delta)} Q^{y} Q(F)^{l + i} \\
      & \qquad \times \int _{\omega} u^{s - 2y - 2\beta - 2\gamma} Q(u^\perp)^{\beta + \gamma - i} \, \cH^{n - k - 1}(\intd u).
    \end{align*}
		To simplify the right-hand side  we apply Legendre's duplication formula three times.
    Then binomial expansion of $Q(u^{\perp})^{\beta + \gamma - i} = (Q - u^{2})^{\beta + \gamma - i}$ and an index shift in the resulting sum yield
    \begin{align*}
      J(\omega, \omega') & = \frac {k! (n - k - 1)! \Gamma(\frac{k} 2) \Gamma(\frac {j} 2 + l)} {2^{n - j} \sqrt \pi j! \Gamma(\frac{j} 2) \Gamma(\frac {n - j + s} 2)} \cH^{k - j - 1}(\omega') \sum_{y = 0}^{\lfloor \frac {s} 2 \rfloor} \sum_{\beta = 0}^{\lfloor \frac {s} 2 \rfloor - y} \sum_{\gamma = 0}^{\lfloor \frac {s} 2 \rfloor - y - \beta} \sum_{\delta = 0}^{\gamma} \sum_{i = 0}^{\beta + \delta} \\
      & \qquad \times \sum_{m = y + i}^{y + \beta + \gamma} (-1)^{m + y + \gamma + \delta} \binom{s}{2y + 2\beta} \binom{y + \beta}{y} \binom{s - 2y - 2\beta}{2\gamma} \binom{\gamma}{\delta} \\
      & \qquad \times \binom{\beta + \delta}{i} \binom{\beta + \gamma - i}{m - y - i} \frac {(i + l - 2)!} {(l - 2)!} \frac {\Gamma(y + \beta + \frac 1 2)} {\Gamma(\frac {n - k + 1} 2 + y + \beta - \frac s 2)} \\
      & \qquad \times \frac{\Gamma(\frac {j + s} 2 - y - \beta - \gamma + 1) \Gamma(\gamma + \frac {1} 2)} {\Gamma(\frac {k + s} 2 - y - \beta + 1)} \frac {\Gamma(\frac {n - k + j + 1} 2 + \beta + \delta - i)}{\Gamma(\frac {n + 1} 2 + \beta + \delta)} \\
      & \qquad \times \frac {\Gamma(\frac {k - j} 2 + i) } {\Gamma(\frac {k} 2 + l + i)} Q^{m - i} Q(F)^{l + i} \int _{\omega} u^{s - 2m} \, \cH^{n - k - 1}(\intd u).
    \end{align*}
    An index shift in the summation with respect to $\beta$ implies that
    \begin{align*}
      J(\omega, \omega') & = \frac {k! (n - k - 1)! \Gamma(\frac{k} 2) \Gamma(\frac {j} 2 + l)} {2^{n - j} \sqrt \pi j! \Gamma(\frac{j} 2) \Gamma(\frac {n - j + s} 2)} \cH^{k - j - 1}(\omega') \sum_{y = 0}^{\lfloor \frac {s} 2 \rfloor} \sum_{\beta = y}^{\lfloor \frac {s} 2 \rfloor} \sum_{\gamma = 0}^{\lfloor \frac {s} 2 \rfloor - \beta} \sum_{\delta = 0}^{\gamma} \sum_{i = 0}^{\beta + \delta - y} \\
      & \qquad \times \sum_{m = y + i}^{\beta + \gamma} (-1)^{m + y + \gamma + \delta} \binom{s}{2\beta} \binom{\beta}{y} \binom{s - 2\beta}{2\gamma} \binom{\gamma}{\delta} \binom{\beta + \delta - y}{i} \\
      & \qquad \times \binom{\beta + \gamma - y - i}{m - y - i} \frac {(i + l - 2)!} {(l - 2)!} \frac {\Gamma(\beta + \frac 1 2)} {\Gamma(\frac {n - k + 1} 2 + \beta - \frac s 2)} \\
      & \qquad \times \frac{\Gamma(\frac {j + s} 2 - \beta - \gamma + 1) \Gamma(\gamma + \frac {1} 2)} {\Gamma(\frac {k + s} 2 - \beta + 1)} \frac {\Gamma(\frac {n - k + j + 1} 2 + \beta + \delta - y - i)}{\Gamma(\frac {n + 1} 2 + \beta + \delta - y)} \\
      & \qquad \times \frac {\Gamma(\frac {k - j} 2 + i) } {\Gamma(\frac {k} 2 + l + i)} Q^{m - i} Q(F)^{l + i} \int _{\omega} u^{s - 2m} \, \cH^{n - k - 1}(\intd u).
    \end{align*}
    By a change of the order of summation we finally obtain
    \begin{align*}
      J(\omega, \omega') & = \cH^{k - j - 1}(\omega') \sum_{m = 0}^{\lfloor \frac s 2 \rfloor} \sum_{i = 0}^{m} b_{n, j, k}^{s, l, i} \, \hat a_{n, j, k}^{s, i, m} \, Q^{m - i} Q(F)^{l + i} \int _{\omega} u^{s - 2m} \, \cH^{n - k - 1}(\intd u),
    \end{align*}
    where
    \begin{align*}
      b_{n, j, k}^{s, l, i} & : = \frac {\Gamma(\frac{k} 2)} {2^{n - j} \sqrt \pi \Gamma(\frac{j} 2) \Gamma(\frac {n - j + s} 2)} \frac{k! (n - k - 1)!}{j!} \frac {(i + l - 2)!} {(l - 2)!} \frac { \Gamma(\frac {j} 2 + l) \Gamma(\frac {k - j} 2 + i) } {\Gamma(\frac {k} 2 + l + i)}, \\
      \hat a_{n, j, k}^{s, i, m} & : = \sum_{y = 0}^{m - i} \sum_{\beta = y}^{\lfloor \frac s 2 \rfloor} \sum_{\gamma = (m - \beta)^+}^{\lfloor \frac {s} 2 \rfloor - \beta} \sum_{\delta = (i - \beta + y)^+}^{\gamma} (-1)^{m + y + \gamma + \delta} \binom{s}{2\beta} \binom{\beta}{y} \binom{s - 2\beta}{2\gamma} \binom{\gamma}{\delta} \\
      & \qquad \times \binom{\beta + \delta - y}{i} \binom{\beta + \gamma - y - i}{m - y - i} \Gamma(\beta + \tfrac 1 2) \Gamma(\gamma + \tfrac {1} 2) \\
      & \qquad \times \frac{\Gamma(\frac {j + s} 2 - \beta - \gamma + 1)} {\Gamma(\frac {k + s} 2 - \beta + 1) \Gamma(\frac {n - k + 1} 2 + \beta - \frac s 2)} \frac {\Gamma(\frac {n - k + j + 1} 2 + \beta + \delta - y - i)}{\Gamma(\frac {n + 1} 2 + \beta + \delta - y)}.
    \end{align*}

  \subsection{Simplifying the coefficients}\label{subsec5.4}

    In this section, we simplify the coefficients $\hat a_{n, j, k}^{s, i, m}$ by a repeated change of the order of summation and by repeated application of relations \eqref{lemZeilfund} and \eqref{lemZeilmod}.
		
    First, an index shift by $\beta$, applied to the summation with respect to $\gamma$, gives
    \begin{align*}
      \hat a_{n, j, k}^{s, i, m} & = \sum_{y = 0}^{m - i} \sum_{\beta = y}^{\lfloor \frac s 2 \rfloor} \sum_{\gamma = \max \{ \beta, m \}}^{\lfloor \frac {s} 2 \rfloor} \sum_{\delta = (i - \beta + y)^+}^{\gamma - \beta} (-1)^{m + y + \gamma + \beta + \delta} \binom{s}{2\beta} \binom{\beta}{y} \binom{s - 2\beta}{2\gamma - 2\beta} \\
      & \qquad \times \binom{\gamma - \beta}{\delta} \binom{\beta + \delta - y}{i} \binom{\gamma - y - i}{m - y - i} \Gamma(\beta + \tfrac 1 2) \Gamma(\gamma - \beta + \tfrac {1} 2) \\
      & \qquad \times \frac{\Gamma(\frac {j + s} 2 - \gamma + 1)} {\Gamma(\frac {k + s} 2 - \beta + 1) \Gamma(\frac {n - k + 1} 2 + \beta - \frac s 2)} \frac {\Gamma(\frac {n - k + j + 1} 2 + \beta + \delta - y - i)}{\Gamma(\frac {n + 1} 2 + \beta + \delta - y)}.
    \end{align*}
    A change of the order of summation yields
    \begin{align*}
      \hat a_{n, j, k}^{s, i, m} & = \sum_{y = 0}^{m - i} \sum_{\gamma = m}^{\lfloor \frac {s} 2 \rfloor} \sum_{\beta = y}^{\gamma} \sum_{\delta = (i - \beta + y)^+}^{\gamma - \beta} (-1)^{m + y + \gamma + \beta + \delta} \binom{s}{2\beta} \binom{\beta}{y} \binom{s - 2\beta}{2\gamma - 2\beta} \binom{\gamma - \beta}{\delta} \\
      & \qquad \times \binom{\beta + \delta - y}{i} \binom{\gamma - y - i}{m - y - i} \Gamma(\beta + \tfrac 1 2) \Gamma(\gamma - \beta + \tfrac {1} 2) \\
      & \qquad \times \frac{\Gamma(\frac {j + s} 2 - \gamma + 1)} {\Gamma(\frac {k + s} 2 - \beta + 1) \Gamma(\frac {n - k + 1} 2 + \beta - \frac s 2)} \frac {\Gamma(\frac {n - k + j + 1} 2 + \beta + \delta - y - i)}{\Gamma(\frac {n + 1} 2 + \beta + \delta - y)}.
    \end{align*}
    Shifting the index of the summation with respect to $\delta$ by $\beta$, we obtain
    \begin{align*}
      \hat a_{n, j, k}^{s, i, m} & = \sum_{y = 0}^{m - i} \sum_{\gamma = m}^{\lfloor \frac {s} 2 \rfloor} \sum_{\beta = y}^{\gamma} \sum_{\delta = \max \{ \beta, i + y \}}^{\gamma} (-1)^{m + y + \gamma + \delta} \binom{s}{2\beta} \binom{\beta}{y} \binom{s - 2\beta}{2\gamma - 2\beta} \\
      & \qquad \times \binom{\gamma - \beta}{\delta - \beta} \binom{\delta - y}{i} \binom{\gamma - y - i}{m - y - i} \Gamma(\beta + \tfrac 1 2) \Gamma(\gamma - \beta + \tfrac {1} 2) \\
      & \qquad \times \frac{\Gamma(\frac {j + s} 2 - \gamma + 1)} {\Gamma(\frac {k + s} 2 - \beta + 1) \Gamma(\frac {n - k + 1} 2 + \beta - \frac s 2)} \frac {\Gamma(\frac {n - k + j + 1} 2 + \delta - y - i)}{\Gamma(\frac {n + 1} 2 + \delta - y)}.
    \end{align*}
    A change of the order of summation gives
    \begin{align*}
      \hat a_{n, j, k}^{s, i, m} & = \sum_{y = 0}^{m - i} \sum_{\gamma = m}^{\lfloor \frac {s} 2 \rfloor} \sum_{\delta = i + y }^{\gamma} \sum_{\beta = y}^{\delta} (-1)^{m + y + \gamma + \delta} \binom{s}{2\beta} \binom{\beta}{y} \binom{s - 2\beta}{2\gamma - 2\beta} \\
      & \qquad \times \binom{\gamma - \beta}{\delta - \beta} \binom{\delta - y}{i} \binom{\gamma - y - i}{m - y - i} \Gamma(\beta + \tfrac 1 2) \Gamma(\gamma - \beta + \tfrac {1} 2) \\
      & \qquad \times \frac{\Gamma(\frac {j + s} 2 - \gamma + 1)} {\Gamma(\frac {k + s} 2 - \beta + 1) \Gamma(\frac {n - k + 1} 2 + \beta - \frac s 2)} \frac {\Gamma(\frac {n - k + j + 1} 2 + \delta - y - i)}{\Gamma(\frac {n + 1} 2 + \delta - y)}.
    \end{align*}
    We conclude from an index shift by $y$, applied to the summation with respect to $\beta$,
    \begin{align*}
      \hat a_{n, j, k}^{s, i, m} & = \sum_{y = 0}^{m - i} \sum_{\gamma = m}^{\lfloor \frac {s} 2 \rfloor} \sum_{\delta = i + y }^{\gamma} \sum_{\beta = 0}^{\delta - y} (-1)^{m + y + \gamma + \delta} \binom{s}{2y + 2\beta} \binom{y + \beta}{y} \binom{s - 2y - 2\beta}{2\gamma - 2y - 2\beta} \\
      & \qquad \times \binom{\gamma - y - \beta}{\delta - y - \beta} \binom{\delta - y}{i} \binom{\gamma - y - i}{m - y - i} \Gamma(y + \beta + \tfrac 1 2) \Gamma(\gamma - y - \beta + \tfrac {1} 2) \\
      & \qquad \times \frac{\Gamma(\frac {j + s} 2 - \gamma + 1)} {\Gamma(\frac {k + s} 2 - y - \beta + 1) \Gamma(\frac {n - k + 1} 2 + y + \beta - \frac s 2)} \frac {\Gamma(\frac {n - k + j + 1} 2 + \delta - y - i)}{\Gamma(\frac {n + 1} 2 + \delta - y)},
    \end{align*}
    and by $- i - y$, applied to the summation with respect to $\delta$,
    \begin{align*}
      \hat a_{n, j, k}^{s, i, m} & = \sum_{y = 0}^{m - i} \sum_{\gamma = m}^{\lfloor \frac {s} 2 \rfloor} \sum_{\delta = 0}^{\gamma - y - i} \sum_{\beta = 0}^{i + \delta} (-1)^{i + m + \gamma + \delta} \binom{s}{2y + 2\beta} \binom{y + \beta}{y} \binom{s - 2y - 2\beta}{2\gamma - 2y - 2\beta} \\
      & \qquad \times \binom{\gamma - y - \beta}{i + \delta - \beta} \binom{i + \delta}{i} \binom{\gamma - y - i}{m - y - i} \Gamma(y + \beta + \tfrac 1 2) \Gamma(\gamma - y - \beta + \tfrac {1} 2) \\
      & \qquad \times \frac{\Gamma(\frac {j + s} 2 - \gamma + 1)} {\Gamma(\frac {k + s} 2 - y - \beta + 1) \Gamma(\frac {n - k + 1} 2 + y + \beta - \frac s 2)} \frac {\Gamma(\frac {n - k + j + 1} 2 + \delta)}{\Gamma(\frac {n + 1} 2 + i + \delta)}.
    \end{align*}
    With Legendre's duplication formula (applied three times) we obtain
    \begin{align*}
      & \binom{s}{2y + 2\beta} \binom{y + \beta}{y} \binom{s - 2y - 2\beta}{ 2\gamma - 2y - 2\beta} \binom{\gamma - y - \beta}{i + \delta - \beta} \binom{i + \delta}{i} \binom{\gamma - y - i}{m - y - i} \\
      & \qquad \qquad \times \Gamma(y + \beta + \tfrac 1 2) \Gamma(\gamma - y - \beta + \tfrac {1} 2) \\
      & \qquad = \binom{s}{2i} \binom{m - i}{y} \binom{\gamma - i}{m - i} \binom{s - 2i}{2\gamma - 2i} \binom{\gamma - y - i}{\delta} \binom{i + \delta}{\beta} \Gamma(i + \tfrac {1} 2) \Gamma(\gamma - i + \tfrac {1} 2),
    \end{align*}
    and hence
    \begin{align*}
      \hat a_{n, j, k}^{s, i, m} & = \Gamma(i + \tfrac {1} 2) \binom{s}{2i} \sum_{y = 0}^{m - i} \sum_{\gamma = m}^{\lfloor \frac {s} 2 \rfloor} \sum_{\delta = 0}^{\gamma - y - i} \sum_{\beta = 0}^{i + \delta} (-1)^{i + m + \gamma + \delta} \\
      & \qquad \times \binom{m - i}{y} \binom{\gamma - i}{m - i} \binom{s - 2i}{2\gamma - 2i} \binom{\gamma - y - i}{\delta} \binom{i + \delta}{\beta} \Gamma(\gamma - i + \tfrac {1} 2) \\
      & \qquad \times \frac{\Gamma(\frac {j + s} 2 - \gamma + 1)} {\Gamma(\frac {k + s} 2 - y - \beta + 1) \Gamma(\frac {n - k + 1} 2 + y + \beta - \frac s 2)} \frac {\Gamma(\frac {n - k + j + 1} 2 + \delta)}{\Gamma(\frac {n + 1} 2 + i + \delta)}.
    \end{align*}

    Now we define $a_{n, j, k}^{s, i, m} : = ( \Gamma(i + \tfrac 1 2) \binom{s}{2i} )^{-1} \hat a_{n, j, k}^{s, i, m}$.
    We first use relation \eqref{lemZeilfund} and then apply relation \eqref{lemZeilmod} twice. Thus we obtain
    \begin{align*}
      & \sum_{\beta = 0}^{i + \delta} \binom{i + \delta}{\beta} \frac{1} {\Gamma(\frac {k + s} 2 - y - \beta + 1) \Gamma(\frac {n - k - s + 1} 2 + y + \beta)} \\
      & \qquad \qquad \qquad = \frac{\Gamma(\frac{n + 1}{2} + i + \delta)} {\Gamma(\frac {k + s} 2 - y + 1) \Gamma(\frac {n - k - s + 1} 2 + i + y + \delta) \Gamma(\frac{n + 1}{2})},
    \end{align*}
    \begin{align*}
      & \sum_{\delta = 0}^{\gamma - y - i} (-1)^{\delta} \binom{\gamma - y - i}{\delta} \frac {\Gamma(\frac {n - k + j + 1} 2 + \delta)}{\Gamma(\frac {n - k - s + 1} 2 + i + y + \delta)} \\
      & \qquad \qquad \qquad = \frac {\Gamma(\frac {n - k + j + 1} 2) \Gamma(-\frac {j + s} 2 + \gamma)}{\Gamma(\frac {n - k - s + 1} 2 + \gamma) \Gamma(- \frac {j + s} 2 + i + y)} \\
      & \qquad \qquad \qquad = (-1)^{i + \gamma + y} \frac {\Gamma(\frac {n - k + j + 1} 2) \Gamma(\frac {j + s} 2 - i - y + 1)}{\Gamma(\frac {n - k - s + 1} 2 + \gamma) \Gamma(\frac {j + s} 2 - \gamma + 1)},
    \end{align*}
    where we used \eqref{Form_Gam_Cont} with $c = \frac {j + s} 2 - i - y\ge 0$ and $m = \gamma - i - y \in \N_{0}$ in the second step, and
    \begin{align*}
      \sum_{y = 0}^{m - i} (-1)^{m + y} \binom{m}{y} \frac{\Gamma(\frac {j + s} 2 - i - y + 1)} {\Gamma(\frac {k + s} 2 - y + 1)} & = \sum_{y = 0}^{m - i} (-1)^{i + y} \binom{m}{y} \frac{\Gamma(\frac {j + s} 2 - m + y + 1)} {\Gamma(\frac {k + s} 2 + i - m + y + 1)} \\
      & = (-1)^{i} \frac{\Gamma(\frac {j + s} 2 - m + 1) \Gamma(\frac {k - j} 2 + m)} {\Gamma(\frac {k + s} 2 + 1) \Gamma(\frac {k - j} 2 + i)}.
    \end{align*}
    This gives
    \begin{align*}
      a_{n, j, k}^{s, i, m} & = (-1)^{i} \frac{\Gamma(\frac {n - k + j + 1} 2) \Gamma(\frac {j + s} 2 - m + 1) \Gamma(\frac {k - j} 2 + m)} {\Gamma(\frac {n + 1} 2) \Gamma(\frac {k + s} 2 + 1) \Gamma(\frac {k - j} 2 + i)} \\
      & \qquad \times \sum_{\gamma = m}^{\lfloor \frac {s} 2 \rfloor} \binom{\gamma - i}{m - i} \binom{s - 2i}{2\gamma - 2i} \frac {\Gamma(\gamma - i + \tfrac {1} 2)}{\Gamma(\frac {n - k - s + 1} 2 + \gamma)}.
    \end{align*}
    We deduce from Legendre's duplication formula that
    \begin{align*}
      \binom{\gamma - i}{m - i} \binom{s - 2i}{2\gamma - 2i} \Gamma(\gamma-i + \tfrac {1} 2) & = \frac{\sqrt \pi}{(m - i)!} \frac{\Gamma(\frac {s + 1} 2 - i) \Gamma(\frac {s} 2 - i + 1)}{(\gamma - m)! \Gamma(\frac {s + 1} 2 - \gamma) \Gamma(\frac {s} 2 - \gamma + 1)} \\
      & = \sqrt \pi \binom{\lfloor \frac {s} 2 \rfloor - i}{m - i} \binom{\lfloor \frac {s} 2 \rfloor - m}{\gamma - m} \frac{\Gamma(\lfloor \frac {s + 1} 2 \rfloor - i + \frac{1}{2})}{\Gamma(\lfloor \frac {s + 1} 2 \rfloor - \gamma + \tfrac{1}{2})}.
    \end{align*}
    Denoting the remaining sum in $a_{n, j, k}^{ s, i, m}$ with respect to $\gamma$ by $S_{4}$, we obtain
    \begin{align*}
      S_{4} & = \sum_{\gamma = 0}^{\lfloor \frac {s} 2 \rfloor - m} \binom{\lfloor \frac {s} 2 \rfloor - m}{\gamma} \frac {1}{\Gamma(\lfloor \frac {s + 1} 2 \rfloor - m - \gamma + \tfrac{1}{2}) \Gamma(\frac {n - k - s + 1} 2 + m + \gamma)},
    \end{align*}
  for which relation \eqref{lemZeilfund}  yields
    \begin{align*}
      S_{4} & = \frac {\Gamma(\frac {n - k - s} 2 +  {\lfloor \tfrac {s + 1} 2 \rfloor + \lfloor \tfrac {s} 2 \rfloor}  - m)}{\Gamma(\lfloor \frac {s + 1} 2 \rfloor - m + \tfrac{1}{2}) {\textstyle \Gamma(\frac {n - k + 1} 2 + \lfloor \frac {s} 2 \rfloor - \frac{s}{2}) \Gamma(\frac {n - k} 2 + \lfloor \frac {s + 1} 2 \rfloor - \frac{s}{2})} }. \\
      & = \frac{\Gamma(\frac {n - k + s} 2 - m)}{\Gamma(\frac {n - k + 1} 2) \Gamma(\frac {n - k} 2) \Gamma(\lfloor \frac {s + 1} 2 \rfloor - m + \tfrac{1}{2})}.
    \end{align*}
    We obtain from Legendre's duplication formula
    \begin{align*}
      & \sqrt \pi \binom{\lfloor \frac {s} 2 \rfloor - i}{m - i} \Gamma(\lfloor \tfrac {s + 1} 2 \rfloor - i + \tfrac{1}{2}) S_{4} \\
      & \qquad = \frac{\Gamma(\frac {n - k + s} 2 - m)}{\Gamma(\frac {n - k + 1} 2) \Gamma(\frac {n - k} 2)} \frac{\sqrt \pi}{(m - i)!} \frac {\Gamma(\frac {s} 2 - i + 1) \Gamma(\tfrac {s + 1} 2 - i)}{\Gamma(\frac {s} 2 - m + 1) \Gamma(\frac {s + 1} 2 - m)} \\
      & \qquad = \frac{\Gamma(\frac {n - k + s} 2 - m)}{\Gamma(\frac {n - k + 1} 2) \Gamma(\frac {n - k} 2)} \binom{s - 2i}{2m - 2i} \Gamma(m - i + \tfrac{1}{2}).
    \end{align*}
    This gives
    \begin{align*}
      a_{n, j, k}^{s, i, m} & = (-1)^{i} \binom{s - 2i}{2m - 2i} \Gamma(m - i + \tfrac{1}{2}) \frac{\Gamma(\frac {n - k + j + 1} 2)} {\Gamma(\frac {n + 1} 2) \Gamma(\frac {n - k + 1} 2) \Gamma(\frac {n - k} 2) \Gamma(\frac {k + s} 2 + 1)} \\
      & \qquad \times \frac{\Gamma(\frac {n - k + s} 2 - m) \Gamma(\frac {j + s} 2 - m + 1) \Gamma(\frac {k - j} 2 + m)}{\Gamma(\frac {k - j} 2 + i)}.
    \end{align*}
Next, using
	$$	 {\binom{s}{2i} \binom{s - 2i}{2m - 2i} \Gamma(i + \tfrac 1 2) \Gamma(m - i + \tfrac{1}{2})}  = \frac{s!}{(s - 2m)!} \frac{\pi}{2^{2m} i! (m - i)!} ,
	$$
$$\frac{(n - k - 1)! k!}{\Gamma(\frac {n - k + 1} 2) \Gamma(\frac {n - k} 2) j!} = \frac{2^{n - j - 1}} {\sqrt \pi} \frac {\Gamma(\frac {k} 2 + 1) \Gamma(\frac {k + 1} 2)} {\Gamma(\frac {j} 2 + 1) \Gamma(\frac {j + 1} 2)} ,
		$$
		and
	$$\frac { \Gamma(\frac {j} 2 + l) \Gamma(\frac {n - k + s} 2 - m) \Gamma(\frac{k} 2)} { \Gamma(\frac {n - j + s} 2) \Gamma(\frac{j} 2) \Gamma(\frac {k} 2 + l + i)}  = \sqrt\pi^{j - k - 2i - 2m} \frac{\omega_{n - j + s} \omega_{j} \omega_{k + 2l + 2i}}{\omega_{j + 2l} \omega_{n - k + s - 2m} \omega_{k}} ,
	$$
we get
    \begin{align*}
      c_{n, j, k}^{s, l, i, m}  :&= \frac{\omega_{n - k} \omega_{k - j}}{\omega_{n - j}} \frac{c_{n, j}^{r, s, l}}{c_{n, k}^{r, s - 2m, l + i}} \binom{s}{2i} \Gamma(i + \tfrac{1}{2}) b_{n, j, k}^{s, l, i} a_{n, j, k}^{s, i, m} \\
      &  = (-1)^{i}  {\binom{s}{2i} \binom{s - 2i}{2m - 2i} \Gamma(i + \tfrac 1 2) \Gamma(m - i + \tfrac{1}{2})} {\frac{(n - k - 1)! k!}{\Gamma(\frac {n - k + 1} 2) \Gamma(\frac {n - k} 2) j!}}  \\
      & \qquad \qquad \times \frac {(i + l - 2)!} {(l - 2)!} \frac {\Gamma(\frac {n - k + j + 1} 2) \Gamma(\frac {j + s} 2 - m + 1) \Gamma(\frac {k - j} 2 + m)} {2^{n - j} \sqrt \pi \Gamma(\frac {n + 1} 2) \Gamma(\frac {k + s} 2 + 1)} \\
      & \qquad \qquad \times  \frac { \Gamma(\frac {j} 2 + l) \Gamma(\frac {n - k + s} 2 - m) \Gamma(\frac{k} 2)} { \Gamma(\frac {n - j + s} 2) \Gamma(\frac{j} 2) \Gamma(\frac {k} 2 + l + i)}   \frac{2 \sqrt\pi^{k - j} }{\Gamma(\frac{k - j}{2})} \frac{\omega_{n - k}}{\omega_{n - j}} \frac{c_{n, j}^{r, s, l}}{c_{n, k}^{r, s - 2m, l + i}} \\
      &   = (-1)^{i} \frac{1}{4^{m} i! (m - i)!} \frac{1}{\pi^{i + m}} \frac {(i + l - 2)!} {(l - 2)!} \frac {\Gamma(\frac {n - k + j + 1} 2) \Gamma(\frac {k + 1} 2)} {\Gamma(\frac {n + 1} 2) \Gamma(\frac {j + 1} 2)} \\
      & \qquad \qquad \times \frac {\Gamma(\frac {k} 2 + 1)} {\Gamma(\frac {k + s} 2 + 1)} \frac {\Gamma(\frac {j + s} 2 - m + 1)} {\Gamma(\frac {j} 2 + 1)} \frac {\Gamma(\frac {k - j} 2 + m)} {\Gamma(\frac{k - j}{2})},
    \end{align*}
which yields the assertion.

    Finally, returning to (\ref{form_I1_mit_J}) and using the definition of the tensorial curvature measures, we get
    \begin{align*}
      I_1 & = \sum_{k = j + 1}^{n - 1} \sum_{m = 0}^{\lfloor \frac s 2 \rfloor} \sum_{i = 0}^{m} c_{n, j, k}^{s, l, i, m} \, Q^{m - i} \frac{1}{\omega_{n - k}} c_{n, k}^{r, s - 2m, l + i} \\
      & \qquad \qquad \times \sum_{F \in \cF_k(P)} Q(F)^{l + i} \int _{F \cap \beta} x^r \, \cH^k(\intd x) \int _{N(P, F) \cap \S} u^{s - 2m} \, \cH^{n - k - 1}(\intd u) \\
      & \qquad \qquad \times \frac{1}{\omega_{k - j}} \sum_{F' \in \cF_{n - k + j}(P')} \cH^{n - k + j}(F' \cap \beta') \cH^{k - j - 1} \left( N(P',F') \cap \S \right) \\
      &	\qquad + \TenCM{j}{r}{s}{l}(P, \beta) \CM{n}(P', \beta') + c_{n, j}^{s} \, \TenCM{n}{r}{0}{\frac s 2 + l} (P, \beta)
			\CM{j}(P', \beta')\\
      & = \sum_{k = j}^{n} \sum_{m = 0}^{\lfloor \frac s 2 \rfloor} \sum_{i = 0}^{m} c_{n, j, k}^{s, l, i, m} \, Q^{m - i} \TenCM{k}{r}{s - 2m}{l + i} (P, \beta) \CM{n - k + j}(P', \beta').
    \end{align*}
		In the last step, we use that for $k=j$ we have
      $c_{n, j, j}^{s, l, i, m} = \1\{ i = m = 0 \}$. Moreover, in the case $k=n$ we use that
		 $\TenCM{n}{r}{s - 2m}{l + i}$ vanishes for $m \neq \frac{s}{2}$. Hence, for even $s$ we have to simplify the sum
      \begin{align*}
        &  \sum_{i = 0}^{\frac s 2} c_{n, j, n}^{s, l, i, \frac s 2} \,  Q^{\frac s 2 - i} \TenCM{n}{r}{0}{l + i} (P, \beta)
				\CM{j}(P', \beta')\\
				&\qquad =\sum_{i = 0}^{\frac s 2} c_{n, j, n}^{s, l, i, \frac s 2} \,
				\frac{\omega_{n + 2l + 2i}}{\omega_{n + s + 2l}} \TenCM{n}{r}{0}{\frac s 2 + l} (P, \beta) \CM{j}(P', \beta').
      \end{align*}
    For this, an application of relation  \eqref{lemZeilmod} yields
      \begin{align}
         &  {\frac{1}{(2\pi)^{s} \left(\frac s 2\right)!} \frac {\Gamma(\frac {n + s} 2 + l)} {\Gamma(l - 1)} \frac {\Gamma(\frac {n} 2 + 1)} {\Gamma(\frac {n + s} 2 + 1)} \frac {\Gamma(\frac {n - j + s} 2)} {\Gamma(\frac{n - j}{2})} \sum_{i = 0}^{\frac s 2} (-1)^{i} \binom{\frac s 2}{i} \frac {\Gamma(i + l - 1)} {\Gamma(\frac n 2 + l + i)}}\nonumber\\
        & \qquad\qquad \times
				\TenCM{n}{r}{0}{\frac s 2 + l} (P, \beta) \CM{j}(P', \beta')\nonumber\\
				&\qquad =c_{n,j}^s \,\TenCM{n}{r}{0}{\frac s 2 + l} (P, \beta) \CM{j}(P', \beta'),\label{PlusPlus}
      \end{align}
	as required. This completes the proof.

  \appendix
  \numberwithin{equation}{section}
  \numberwithin{Satz}{section}

  \section{Explicit sum expressions}\label{secA}

      In this section, we first establish closed form expressions for sums which are required in the preceding parts.

      \begin{Lemma} \label{Lem_Zeil_yq} Let $q \in \N_0$, $b, c \in \R$. Then
        \begin{equation}\label{lemZeilfund}
          \sum_{y=0}^q \binom{q}{y} \frac {1} {\Gamma(b + y) \Gamma(c - y)} = \frac { \Gamma(b + c + q - 1)} {\Gamma(c) \Gamma(b + q) \Gamma(b + c - 1)}.
        \end{equation}
      \end{Lemma}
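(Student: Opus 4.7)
The plan is to recognize the left-hand side as the classical Chu--Vandermonde summation in disguise, once each Gamma factor is rewritten via Pochhammer symbols $(a)_y:=\Gamma(a+y)/\Gamma(a)$.

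First I would write $\frac{1}{\Gamma(b+y)}=\frac{1}{\Gamma(b)(b)_y}$, and use the telescoping identity $\Gamma(c-y)=\Gamma(c)/[(c-1)(c-2)\cdots(c-y)]=\Gamma(c)/[(-1)^y(1-c)_y]$ (which is just iterated application of $\Gamma(z+1)=z\Gamma(z)$) to get $\frac{1}{\Gamma(c-y)}=\frac{(-1)^y(1-c)_y}{\Gamma(c)}$. Combined with $(-1)^y\binom{q}{y}=(-q)_y/y!$ (check: $(-q)_y=(-q)(-q+1)\cdots(-q+y-1)=(-1)^y q!/(q-y)!$ for $y\le q$), the left-hand side becomes
\[
\frac{1}{\Gamma(b)\Gamma(c)}\sum_{y=0}^{q}\frac{(-q)_y\,(1-c)_y}{(b)_y\, y!}
=\frac{1}{\Gamma(b)\Gamma(c)}\,{}_2F_1(-q,\,1-c;\,b;\,1).
\]
Now I invoke the Chu--Vandermonde theorem, ${}_2F_1(-q,a;b;1)=(b-a)_q/(b)_q$, with $a=1-c$, to obtain $\frac{(b+c-1)_q}{\Gamma(b)\Gamma(c)\,(b)_q}$. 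Converting the Pochhammer symbols back, $(b+c-1)_q=\Gamma(b+c+q-1)/\Gamma(b+c-1)$ and $(b)_q=\Gamma(b+q)/\Gamma(b)$, the $\Gamma(b)$ cancels and yields the claimed right-hand side.

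The main subtlety to watch is that $b$ and $c$ are arbitrary real parameters, so some of the Gamma factors can meet poles. This is handled by the Gamma-function convention fixed in \eqref{Form_Gam_Cont}: after multiplying both sides by $\Gamma(b)\Gamma(c)$, the identity reduces to a polynomial identity (in $b$ and $c$) of bounded degree in $q$, which holds on a dense set and hence everywhere by continuity. Equivalently, the terminating Chu--Vandermonde series is a polynomial identity in its parameters and requires no restriction beyond the convention that $1/\Gamma$ vanishes at nonpositive integers.

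A self-contained alternative, should one wish to avoid invoking hypergeometric machinery, is induction on $q$ using Pascal's rule $\binom{q+1}{y}=\binom{q}{y}+\binom{q}{y-1}$, together with the partial-fraction step $\frac{1}{\Gamma(b+y)\Gamma(c-y)}=\frac{1}{b+c-1}\bigl[\frac{1}{\Gamma(b+y)\Gamma(c-y-1)}+\frac{1}{\Gamma(b+y-1)\Gamma(c-y)}\bigr]$ that comes from $b+y-1+c-y-1+1=b+c-1$; this reduces the case $q+1$ to two copies of the case $q$ with shifted parameters. The hypergeometric route seems cleaner, so I would use that as the main argument and only mention induction if a fully elementary proof is preferred.
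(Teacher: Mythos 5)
Your main argument is correct, but it takes a genuinely different route from the paper. You convert the summand to Pochhammer form, recognize the sum as the terminating series ${}_2F_1(-q,\,1-c;\,b;\,1)$, and invoke Chu--Vandermonde; the paper instead gives a self-contained Zeilberger/WZ-style proof: it exhibits an explicit certificate $G(q,y)$ for which $-(b+q-1)F(q,y)+(b+c+q-2)F(q-1,y)=G(q-1,y+1)-G(q-1,y)$, telescopes to get the first-order recurrence $(b+q-1)f(q)=(b+c+q-2)f(q-1)$, and solves it from $f(0)=1/(\Gamma(b)\Gamma(c))$. Both proofs restrict first to generic parameters ($b,c\notin\Z$ in the paper) and extend by continuity, so your treatment of the degenerate values matches theirs in spirit. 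Your route buys brevity by outsourcing the work to a classical summation theorem; the paper's buys self-containedness (no hypergeometric machinery cited) at the cost of having to produce and verify the certificate.

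Two minor slips worth fixing. First, multiplying by $\Gamma(b)\Gamma(c)$ does not yield a polynomial identity, since $\Gamma(b)/\Gamma(b+y)=1/(b)_y$ is only rational in $b$; multiply instead by $\Gamma(b+q)\Gamma(c)$, which turns both sides into polynomials in $b$ and $c$ (alternatively, just note that both sides of \eqref{lemZeilfund} are entire in $(b,c)$ because $1/\Gamma$ is entire, so agreement on a dense set suffices). Second, in your sketched inductive alternative the partial-fraction constant is wrong: since $(b+y-1)+(c-y-1)=b+c-2$, the identity reads
\begin{equation*}
\frac{1}{\Gamma(b+y)\Gamma(c-y)}=\frac{1}{b+c-2}\left[\frac{1}{\Gamma(b+y)\Gamma(c-y-1)}+\frac{1}{\Gamma(b+y-1)\Gamma(c-y)}\right],
\end{equation*}
with $\frac{1}{b+c-2}$ rather than $\frac{1}{b+c-1}$. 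Neither slip affects the validity of your primary (hypergeometric) argument.
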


      In this work, we often use a consequence of Lemma \ref{Lem_Zeil_yq}: With \eqref{Form_Gam_Cont} we obtain for $a > 0$ and $b\in\R$ the relation
        \begin{equation}\label{lemZeilmod} \tag{\ref{lemZeilfund}$'$}
          \sum_{y=0}^q (-1)^{y} \binom{q}{y} \frac {\Gamma(a + y)} {\Gamma(b + y)} = \frac {\Gamma(a) \Gamma(b - a + q)} {\Gamma(b + q) \Gamma(b - a)},
        \end{equation}
which extends a corresponding lemma in \cite{HugWeis16a} to the range $b\le 0$.

For the convenience of the reader, we include a proof of Lemma \ref{Lem_Zeil_yq}.

      \begin{proof}
        For the proof, we can assume that $b, c \notin \mathbb{Z}$. The general case then follows from
				a continuity argument. We set
        \begin{align*}
           F(q, y) := \binom{q}{y} \frac {1} {\Gamma(b + y) \Gamma(c - y)},\qquad q,y\in\N_0.
        \end{align*}
        Then we have $F(q, y) = 0$ if $y \notin  \{ 0, \ldots, q \}$. We set
        \begin{align*}
          f(q) := \sum_{y = 0}^{q} F(q, y),\qquad q\in\N_0.
        \end{align*}
        Furthermore, for $q,y\in\N_0$ we define
        \begin{align*}
          G(q, y) :=
            \begin{cases}
              \frac{y (b + y - 1)}{q - y + 1} F(q, y), \qquad & \text{ for } y \in \{ 0, \ldots, q\}, \\[1ex]
              G(q, q) -(b + q) F(q + 1, q) & \\
              \qquad + (b + c + q - 1) F(q, q), \qquad & \text{ for } y = q + 1, \\
              0, & \text{ for } y\ge q+2.
            \end{cases}
        \end{align*}
        A direct calculation yields
        \begin{align*}
          -(b + q - 1) F(q, y) + (b + c + q - 2) F(q - 1, y) = G(q - 1, y + 1) - G(q - 1, y)
        \end{align*}
        for $y \in \N_{0}$ and $q\in\N$.
				Summing this relation for all $y \in \{ 0, \ldots, q \}$ gives
        \begin{align*}
          - (b + q - 1) f(q) + (b + c + q - 2) f(q - 1) = 0,
        \end{align*}
        and thus recursively
        \begin{align*}
          f(q) & = \frac {(b + c + q - 3)(b - a + q - 2)} {(b + q - 2) (b + q - 1)} f(q - 2) \\
          & \ \, \vdots \\
          & = \frac {(b + c - 1) \cdots (b + c + q - 2)} {b \cdots (b + q - 1)} f(0) \\
          & =  \frac {\Gamma(b + c + q - 1) \Gamma(b)} {\Gamma(b + q) \Gamma(b + c - 1)} f(0).
        \end{align*}
        With $f(0) = \frac{1}{\Gamma(b)\Gamma(c)}$ we obtain the assertion.
      \end{proof}

    \begin{Lemma}\label{Lem_Zeil_is}
    	Let $\alpha,  \beta, \gamma \in \N$, $0 < j < n$. Then
    	\begin{align*}
    		&\sum_{i = 2x + \alpha}^{s} (-1)^{i + \alpha} \binom{s}{i} \binom{i}{2x} \binom{i - 2x}{\alpha} \left( 1 - z^2 \right)^{i} \\
    		& \qquad = \binom{s}{2x} \binom{s - 2x}{\alpha} z^{2s - 4x - 2\alpha} (1- z^2)^{2x + \alpha}.
				\end{align*}
    \end{Lemma}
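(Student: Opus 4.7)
The plan is to reduce the alternating sum on the left-hand side to a single application of the binomial theorem, after rewriting the product of three binomial coefficients in a form that separates the $i$-dependent and $i$-independent pieces.

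First I would use the standard trinomial identity
\begin{align*}
\binom{s}{i}\binom{i}{2x}\binom{i-2x}{\alpha}
= \binom{s}{2x}\binom{s-2x}{\alpha}\binom{s-2x-\alpha}{i-2x-\alpha},
\end{align*}
which is immediate from writing each binomial coefficient in terms of factorials. The two leading factors do not depend on the summation index $i$ and can be pulled out of the sum. What remains is
\begin{align*}
\binom{s}{2x}\binom{s-2x}{\alpha}\sum_{i=2x+\alpha}^{s}(-1)^{i+\alpha}\binom{s-2x-\alpha}{i-2x-\alpha}(1-z^2)^i.
\end{align*}

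Next I would perform the index shift $j:=i-2x-\alpha$, so that $j$ ranges from $0$ to $s-2x-\alpha$ and $(-1)^{i+\alpha}=(-1)^{j+2x+2\alpha}=(-1)^j$. Factoring out $(1-z^2)^{2x+\alpha}$ from the common exponent gives
\begin{align*}
\binom{s}{2x}\binom{s-2x}{\alpha}(1-z^2)^{2x+\alpha}\sum_{j=0}^{s-2x-\alpha}(-1)^j\binom{s-2x-\alpha}{j}(1-z^2)^j.
\end{align*}

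Finally, the binomial theorem applied to $(1-(1-z^2))^{s-2x-\alpha}=z^{2(s-2x-\alpha)}$ evaluates the remaining sum, yielding the right-hand side. There is no real obstacle here; the only care needed is bookkeeping of signs under the index shift and checking that the trinomial factorization is valid for the full range $2x+\alpha\le i\le s$ (both sides vanish outside this range, so extending the sum is harmless). The lemma is stated for $\alpha,\beta,\gamma\in\mathbb{N}$ with $0<j<n$, but these hypotheses play no role in the identity itself; the argument above is purely algebraic and works for arbitrary nonnegative integers $s,x,\alpha$ with $2x+\alpha\le s$ and arbitrary $z\in\mathbb{R}$.
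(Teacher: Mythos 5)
Your proof is correct and is essentially the paper's own argument: both rest on the same trinomial coefficient identity, an index shift, and the binomial theorem applied to $(1-(1-z^2))^{s-2x-\alpha}=z^{2(s-2x-\alpha)}$, with the only difference being that you factor the binomial coefficients before shifting the index while the paper shifts first. Your closing observation that the stated hypotheses are immaterial to this purely algebraic identity is also accurate.
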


    \begin{proof}
      We start with an index shift
      \begin{align*}
        & \sum_{i = 2x + \alpha}^{s} (-1)^{i + \alpha} \binom{s}{i} \binom{i}{2x} \binom{i - 2x}{\alpha} \left( 1 - z^2 \right)^{i} \\
        & \qquad = \left( 1 - z^2 \right)^{2x + \alpha} \sum_{i = 0}^{s - 2x - \alpha} \binom{s}{i + 2x + \alpha} \binom{i + 2x + \alpha}{2x} \binom{i + \alpha}{\alpha} \left( z^2 - 1 \right)^{i}.
      \end{align*}
It is easy to check that
$$\binom{s}{i + 2x + \alpha} \binom{i + 2x + \alpha}{2x} \binom{i + \alpha}{\alpha}  = \binom{s}{2x} \binom{s - 2x}{\alpha} \binom{s - 2x - \alpha}{i} .
$$
Then the binomial theorem yields
      \begin{align*}
        \sum_{i = 0}^{s - 2x - \alpha} \binom{s - 2x - \alpha}{i} \left( z^2 - 1 \right)^{i} & = z^{2s - 4x - 2\alpha},
      \end{align*}
      and thus the assertion.
    \end{proof}
    	
  Finally, we also provide a required measurability result.  	
    	
    	\begin{Lemma}\label{lemmeas1}
			For $j\in\{0,\ldots,n-1\}$, $r,s,l\in\N_0$, and $\eta\in\mathcal{B}(\Sigma^n)$, the mapping $P\mapsto 
			\tilde{\phi}_j^{r,s,l}(P,\eta)$, $P\in\mathcal{P}^n$, is measurable. 
			\end{Lemma}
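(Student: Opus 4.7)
The plan is to prove measurability via a stratification of $\mathcal{P}^n$ by combinatorial type, combined with weak continuity of $\tilde{\phi}_j^{r,s,l}(P,\cdot)$ on each stratum. To handle the fact that $\mathcal{P}^n$ is unbounded, I write $\mathcal{P}^n = \bigcup_{N, R \in \N} \mathcal{P}^n_{N,R}$, where $\mathcal{P}^n_{N,R}$ consists of polytopes with at most $N$ vertices contained in $R B^n$. Each $\mathcal{P}^n_{N,R}$ is compact in $\mathcal{K}^n$ (continuous image of the compact set $(R\overline{B^n})^N$ under convex hull), hence Borel, so it suffices to establish measurability on each of them. The continuous surjection $\pi_N\colon (R\overline{B^n})^N \to \mathcal{P}^n_{N,R}$, $(x_1,\ldots,x_N) \mapsto \conv\{x_1,\ldots,x_N\}$, allows me to pull the question back and instead verify that $(x_1,\ldots,x_N) \mapsto \tilde{\phi}_j^{r,s,l}(\pi_N(x_1,\ldots,x_N), \eta)$ is Borel on $(R\overline{B^n})^N$.

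I then partition $(R\overline{B^n})^N$ into finitely many Borel strata $A_\tau$ indexed by the combinatorial type $\tau$ of $\pi_N(x_1,\ldots,x_N)$, that is, the labeled face lattice specifying which subsets of $\{1,\ldots,N\}$ span which faces. Each $A_\tau$ is described by finitely many polynomial (in)equalities in the $x_i$ (vanishing of affine independence determinants and sign conditions on linear forms testing which halfspaces are active), hence is locally closed and Borel. On a fixed stratum $A_\tau$, each $j$-face has the form $F_S(x) = \conv(x_i : i \in S)$ for a constant $S \subseteq \{1,\ldots,N\}$, and the incidence relation between the facets of $\pi_N(x)$ and $F_S(x)$ is constant. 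Consequently, the direction space of $F_S(x)$, the outer unit normals of the facets through $F_S(x)$ (unique solutions of a non-degenerate linear system with continuously varying coefficients), and hence both $Q(F_S(x))^l$ and the normal cone $N(\pi_N(x), F_S(x))$, depend continuously on $x \in A_\tau$.

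From this continuous dependence, standard facts about Hausdorff convergence of convex sets give weak convergence of $\mathcal{H}^j|_{F_S(x)}$ and of $\mathcal{H}^{n - j - 1}|_{N(\pi_N(x), F_S(x)) \cap \Sn}$ as $x$ varies in $A_\tau$. Since the finite family of $j$-faces is indexed by a fixed collection of subsets $S$, summing the product measures against the continuously varying tensor coefficient $Q(F_S(x))^l$ shows that $x \mapsto \tilde{\phi}_j^{r,s,l}(\pi_N(x), \cdot)$ is weakly continuous from $A_\tau$ into the space of finite $\T^p$-valued Borel measures on the bounded region $(R\overline{B^n}) \times \Sn$.

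To pass from weak continuity to measurability for a fixed Borel $\eta$, I apply a monotone class argument coordinate-wise in $\T^p$: the set of bounded Borel functions $f$ on $(R\overline{B^n}) \times \Sn$ for which $x \mapsto \int f \, \intd (\mathcal{H}^j|_{F_S(x)} \otimes \mathcal{H}^{n - j - 1}|_{N(\pi_N(x), F_S(x)) \cap \Sn})$ is Borel on $A_\tau$ is a monotone vector space containing all bounded continuous functions, and therefore coincides with all bounded Borel functions. Applied with $f(x,u) = \1_\eta(x,u)$ times a component of the tensor polynomial $x^r u^s$ and summed over the fixed finite family of subsets $S$, this yields measurability on $A_\tau$, and then on $\mathcal{P}^n$ by taking the finite union over $\tau$ followed by countable unions over $N$ and $R$. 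The main obstacle is the continuity of the normal cone $N(\pi_N(x), F_S(x))$ on each stratum; this I will verify by exploiting that on $A_\tau$ the set of facets containing $F_S(x)$ is constant, so that $N(\pi_N(x), F_S(x))$ is the conical hull of a fixed number of continuously varying outer unit normals.
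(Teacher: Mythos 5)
Your route is genuinely different from the paper's. The paper stays entirely on $\mathcal{P}^n$ with the Fell/Hausdorff topology: it reduces to $r=s=0$ and product sets $\eta=\beta\times\omega$, invokes the measurability of $P\mapsto\mathcal{F}_k(P)$ from \cite[Lemma 10.1.2]{SchnWeil08} together with the counting-measure formalism to reduce everything to the measurability of a single integrand $g(P,F)$, proves measurability of $(P,F)\mapsto N(P,F)\cap\Sn$ by a direct semicontinuity argument with support functions, and finally quotes \cite[Corollary 2.1.4]{Zaehle82} to handle the Hausdorff-measure evaluations. You instead parametrize polytopes by vertex tuples, stratify the parameter cube by labeled combinatorial type, and exploit weak continuity on each (semialgebraic, hence Borel) stratum together with a functional monotone class argument. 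Both work; yours avoids Z\"ahle's theorem and the Fell-topology machinery at the price of the realization-space stratification, and it handles general $r,s,l$ and arbitrary $\eta$ in one stroke rather than via the product-set reduction.

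One step deserves more justification than you give it: the descent from the cube back to $\mathcal{P}^n_{N,R}$. Knowing that $G:=\tilde{\phi}_j^{r,s,l}(\pi_N(\cdot),\eta)$ is Borel on $(R\overline{B^n})^N$ does not formally yield Borel measurability of $f:=\tilde{\phi}_j^{r,s,l}(\cdot,\eta)$ on the image, because $f^{-1}(B)=\pi_N(G^{-1}(B))$ is a priori only an analytic set; continuous images of Borel sets need not be Borel. The conclusion you want is nevertheless true: $f^{-1}(B)$ and $f^{-1}(B^c)$ are both analytic subsets of the compact metric space $\mathcal{P}^n_{N,R}$, hence Borel by Suslin's theorem; alternatively, the Federer--Morse selection theorem gives a Borel right inverse $\sigma$ of $\pi_N$ and one writes $f=G\circ\sigma$. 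One of these fixes should be stated explicitly. The remaining ingredients --- Borel-ness of the strata $A_\tau$ via sign conditions on determinants, Hausdorff continuity of the normal cone on a stratum of constant combinatorial type (where the extreme rays of $N(\pi_N(x),F_S(x))$ form a fixed, continuously varying family), and the monotone class passage from continuous to bounded Borel integrands --- are sound as sketched.
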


\begin{proof}
For the proof, it is sufficient to consider the case where $r=s=0$ and $\eta=\beta\times\omega$ 
with $\beta\in\mathcal{B}(\R^n)$ and  $\omega\in\mathcal{B}(\mathbb{S}^{n-1})$. For a locally compact 
Hausdorff space $E$ with a countable base, let $\mathcal{F}(E)$ denote the system of closed subsets of $E$. 
With the Fell topology $\mathcal{F}(E)$ becomes a compact  Hausdorff space with a countable base and 
$\mathcal{F}'(E):=\mathcal{F}(E)\setminus\{\emptyset\}$ is a locally compact subspace. Then $\mathcal{K}^n$ and 
$\mathcal{P}^n$ are measurable subsets of $\mathcal{F}(\R^n)$ and the subspace topology on these subsets coincides 
with the topology induced by the Hausdorff metric (see \cite[Theorem 12.3.4]{SchnWeil08}). 
Further, let ${\sf N}(E)$ 
denote the set of counting measures on $\mathcal{B}(E)$. On ${\sf N}(E)$ we write $\mathcal{N}(E)$ for the sigma algebra generated by the evaluation maps $\eta\mapsto \eta(A)$, where $A\in \mathcal{B}(E)$. We refer to Chapters 3.1 and 12.2 in 
\cite{SchnWeil08} for details.  

In the proof of \cite[Lemma 10.1.2]{SchnWeil08} it is shown that the map 
$\mathcal{P}^n\to\mathcal{F}(\mathcal{F}'(\R^n))$, $P\mapsto 
\mathcal{F}_k(P)$, is measurable. By 
\cite[Lemma 3.1.4]{SchnWeil08} it follows then that the map 
 $\mathcal{P}^n\to\mathcal{P}^n\times{\sf N}(\mathcal{F}'(\R^n))$, 
$P\mapsto (P,\eta_{\mathcal{F}_k(P)})$ is also measurable, where $\eta_{\mathcal{F}_k(P)}$ 
is the simple counting measure with support $\mathcal{F}_k(P)$. Further, if $g:\mathcal{P}^n\times 
\mathcal{F}'(\R^n)\to[0,\infty]$ is measurable, then the map 
$$
\mathcal{P}^n\times {\sf N}(\mathcal{F}'(\R^n))\to[0,\infty],\qquad (P,\eta)\mapsto \int g(P,F)\, \eta(\intd F),
$$
is measurable. Thus, to prove the assertion of the lemma, it is sufficient to show that, for all $\beta\in \mathcal{B}(\R^n)$ and $\omega\in\mathcal{B}(\mathbb{S}^{n-1})$, the map $g$ defined by
$$
g(P,F):=\1\{F\in\mathcal{F}_k(P)\}(Q(F)^l)_{i_1\ldots i_{2l}}\,\mathcal{H}^k(F\cap \beta)\,\mathcal{H}^{n-1-k}(N(P,F)\cap \omega), 
$$
for $(P,F)\in \mathcal{P}^n\times \mathcal{F}'(\R^n)$, 
is measurable, where the definition is to be understood in the sense that $g(P,F):=0$ if $F\notin\mathcal{F}_k(P)$ and 
where $(Q(F)^l)_{i_1\ldots i_{2l}}$ is the coordinate  of the tensor $Q(F)^l$ with respect to some basis of $\mathbb{T}^{2l}$. Note that $\text{aff}(F)^0=\bigcup\{\lambda(F-s(F)):\lambda\in\N\}$, where $s(K)\in\text{relint}(K)$ is the Steiner point of a convex body $K\in\mathcal{K}^n$ (see \cite[p.~50]{Schneider14}). 
Since the maps $\mathcal{P}^n\to\mathcal{F}(\mathcal{F}'(\R^n))$, $P\mapsto 
\mathcal{F}_k(P)$,  and $s:\mathcal{K}^n\to\R^n$  are measurable, the measurability of the mapping  
$(P,F)\mapsto \1\{F\in\mathcal{F}_k(P)\}(Q(F)^l)_{i_1\ldots i_{2l}}$ is implied by 
Theorems 12.2.3, 12.2.7 and 12.3.1 in \cite{SchnWeil08}. 
Moreover, it follows that  $M_k:=\{(P,F)\in \mathcal{P}^n\times\mathcal{F}'(\R^n):F\in \mathcal{F}_k(P)\}$ 
is measurable. 

Next we show 
that the map $M_k\to \mathcal{F}'(\R^n)$, $(P,F)\mapsto N(P,F)\cap \mathbb{S}^{n-1}$, is measurable. For this, 
observe that $s(F)\in\text{relint}(F)$ implies that $N(P,F)=N(P,s(F))$, where $N(K,x)$ denotes 
the normal cone of a convex body $K\in\mathcal{K}^n$ at the point $x\in K$. 
Since $M:=\{(P,x)\in\mathcal{P}^n\times\R^n:x\in P\}$ is a  measurable subset of  $\mathcal{P}^n\times\R^n$, 
and $s:\mathcal{K}^n\to\R^n$ is measurable, it is sufficient to show that the map $T:M\to \mathcal{F}'(\R^n)$, 
$(P,x)\mapsto N(P,x)\cap \mathbb{S}^{n-1}$, is measurable. To see this, let $C\subset\R^n$ be compact. It is sufficient to prove that $M_C:=\{(P,x)\in M:T(P,x)\cap C=\emptyset\}$ is open in $M$. Aiming at a contradiction, we assume that there are $(P_i,x_i)\in M\setminus M_C$, for $i\in\N$, with $(P_i,x_i)\to (P,x)\in M_C$ as $i\to\infty$. Then there are 
$u_i\in  N(P_i,x_i)\cap \mathbb{S}^{n-1}\cap C$ for $i\in\N$. By compactness, there is a subsequence $u_{i_j}$, $j\in\N$, 
which converges to $u\in \mathbb{S}^{n-1}\cap C$. For a convex body $K\in\mathcal{K}^n$, a point $x\in K$, and $v\in\R^n$ we have $v\in N(K,x)$ if and only if $\langle v,x\rangle=h(K,v)$, where $h(K,v)$ is the support function 
$h(K,\cdot)$ of $K$ evaluated at $v$. By assumption, we have $\langle u_{i_j},x_{i_j}\rangle=h(P_{i_j},u_{i_j})$ for $j\in\N$. Since the support function depends continuously on $(K,v)$, it follows that $\langle u,x\rangle=h(P,u)$, and thus $u\in N(P,x)$. This yields $u\in  N(P,x)\cap\mathbb{S}^{n-1}\cap C\neq\emptyset$, that is, $(P,x)\notin M_C$, a contradiction. 

The measurability of $g$ now follows by applying twice \cite[Corollary 2.1.4]{Zaehle82}, since the indicator function 
ensures that effectively the Hausdorff measures  $\mathcal{H}^k(F\cap \cdot)$ and $\mathcal{H}^{n-1-k}(N(P,F)\cap \mathbb{S}^{n-1}\cap  \cdot)$ 
are locally finite.
\end{proof}

\end{document}